\begin{document}
\title[Cahn-Hilliard Equations on Lattices]{Cahn-Hilliard Equations on Lattices: Dynamic Transitions and Pattern Formations}
\author[Grossman]{Jared Grossman}
\address[Grossman]{Department of Mathematics and Statistics, Boston University, Boston, MA 02215}
\email{jaredg@bu.edu}
\author[Halloran]{Evan Halloran}
\address[Halloran]{Department of Mathematics, Indiana University, Bloomington, IN 47405}
\email{ehallor@iu.edu}
\author[Wang]{Shouhong Wang}
\address[Wang]{Department of Mathematics, Indiana University, Bloomington, IN 47405}
\email{showang@iu.edu}

\date{July 26, 2024}

\newcommand{\Q}{\mathbb{Q}}
\newcommand{\R}{\mathbb{R}}
\newcommand{\Z}{\mathbb{Z}}
\newcommand{\N}{\mathbb{N}}
\newcommand{\cd}{\cdot}
\newcommand{\cds}{\cdots}
\newcommand{\C}{\mathbb{C}}
\newcommand{\CP}{C(\R\setminus \Z ; \C)}
\newcommand{\conj}{\overline}
\newtheorem{theorem}{Theorem}[section]
\newtheorem{definition}[theorem]{Definition}
 \newcommand{\comment}[1]{\textcolor{red}{#1}}
\newtheorem{remark}{Remark}[section]

\def\bd{\begin{defi}}
\def\ed{\end{defi}}
\newtheorem{thm}{Theorem}[section]
\newtheorem{lem}{Lemma}[section]
\newtheorem{defi}{Definition}[section]
\newtheorem{ex}{Example}[section]
\newtheorem{prop}[thm]{Proposition}
\newtheorem{xca}[thm]{Exercise}
\newtheorem{rem}{Remark}[section]
\newtheorem{cor}{Corollary}[section]
\newtheorem{method}[thm]{Method}
\def\bt{\begin{thm}}
\def\et{\end{thm}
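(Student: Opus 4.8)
The plan is to run the standard program of dynamic transition theory on this lattice system. First I would recast the equation as an abstract evolution equation $\tfrac{du}{dt} = L_\lambda u + G(u)$ on the phase space $H = \{u : \sum_j u_j = 0\}$ of mean-zero lattice functions --- legitimate because the discrete Cahn--Hilliard flow conserves total mass, so one may study perturbations $u$ of the homogeneous state $\bar u$ inside $H$, on which the negative discrete Laplacian $-\Delta_d$ is positive definite (for a connected lattice). Here $L_\lambda u = -\Delta_d\bigl(\epsilon\Delta_d u + (\lambda - 3\bar u^2)\,u\bigr)$ and $G$ collects the remaining quadratic and cubic terms $-\Delta_d(-3\bar u\,u^2 - u^3)$. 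Diagonalizing by the characters $\chi_k$ of the (finite abelian) lattice and writing $\mu_k > 0$ for the eigenvalues of $-\Delta_d$ on $H$, the eigenvalues of $L_\lambda$ are $\beta_k(\lambda) = \mu_k\bigl(\lambda - 3\bar u^2 - \epsilon\mu_k\bigr)$.

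Second, I would read off the critical parameter and eigenspace: $\lambda_c = 3\bar u^2 + \epsilon\,\mu_\ast$ with $\mu_\ast = \min_{k\neq 0}\mu_k$, and $E_0 = \mathrm{span}\{\chi_k : \mu_k = \mu_\ast\}$. Since each $\beta_k(\cdot)$ is affine and strictly increasing with slope $\mu_k > 0$, the principle of exchange of stabilities is automatic: $\mathrm{Re}\,\beta_k < 0$ for all $k$ when $\lambda < \lambda_c$; at $\lambda = \lambda_c$ the eigenvalues on $E_0$ vanish while the rest stay strictly negative; and $\tfrac{d}{d\lambda}\beta_k(\lambda_c) = \mu_\ast > 0$ on $E_0$. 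This verifies the hypotheses of the attractor bifurcation theorem of dynamic transition theory, so a dynamic transition occurs at $\lambda_c$.

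Third, I would compute the center-manifold reduction $u = v + \Phi(v,\lambda)$ with $v \in E_0$, giving the reduced equation $\tfrac{dv}{dt} = \beta(\lambda)v + P\,G(v + \Phi(v,\lambda))$ with $P$ the projection onto $E_0$. The transition type is then governed by the lowest-order nontrivial term of $P G$ on $E_0$. If $\bar u = 0$ the quadratic part is absent and this term is the cubic form $v \mapsto P\,\Delta_d(v^3)$, computable directly from the convolution structure of the characters; if $\bar u \neq 0$ one must retain $\Phi = O(|v|^2)$, because the quadratic nonlinearity $-\Delta_d(3\bar u\,v^2)$ --- though it may vanish \emph{on} $E_0$ by character orthogonality --- feeds back through $\Phi$ to produce an extra cubic contribution that must be added to $P\,\Delta_d(v^3)$. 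In either case I expect the resulting cubic form on $E_0$ to be negative (semi-)definite, yielding a continuous (Type-I) transition: for $\lambda > \lambda_c$ the system bifurcates to an attractor homeomorphic to a sphere in $E_0$, whose structure (isolated steady patterns joined by heteroclinic orbits, since the reduced system inherits a Lyapunov functional from the discrete Cahn--Hilliard energy) is pinned down by minimizing the reduced cubic energy over the unit sphere of $E_0$.

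The main obstacle I anticipate is this last point: lattice symmetry generically makes $\dim E_0$ larger than one --- every wavevector of minimal ``length'' is critical at once --- so the reduced cubic form is a genuinely multivariate object, and computing it requires carefully tracking the resonance conditions $k_1 \pm k_2 \pm k_3 \equiv 0$ (and $k_1 \pm k_2 \equiv 0$ for the $\Phi$-correction) that survive under orthogonality of characters. Getting these selection rules right, verifying definiteness of the resulting form, and then identifying its spherical minimizers --- hence the actual emergent pattern on the lattice --- is where the genuine work lies; the spectral computation and the verification of exchange of stabilities in the first two steps are essentially bookkeeping.
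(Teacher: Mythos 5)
Your overall program---abstract formulation, PES via the monotonicity of each $\beta_k(\lambda)$ in $\lambda$, center-manifold reduction, and classification of the transition by the lowest-order term of the reduced vector field---is exactly the paper's strategy, and your identification of the critical set as the dual-lattice vectors of minimal length, with the attendant resonance bookkeeping, is the right picture. But there are two genuine gaps. First, a misreading of the setting: the paper studies the \emph{continuum} Cahn--Hilliard PDE on $\R^2$ with periodicity under a Bravais lattice $L$, i.e.\ on the torus $\R^2/L$, so the modes are the countably many exponentials $e^{ik\cdot x}$, $k\in L^*\setminus\{0\}$, and one genuinely needs the infinite-dimensional sectorial/center-manifold machinery. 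Your ``finite abelian lattice with characters $\chi_k$ and discrete Laplacian $\Delta_d$'' would make the phase space finite-dimensional and changes the problem; the spectral formulas happen to be parallel, which is why the rest of your outline still tracks the paper, but the functional-analytic hypotheses you must verify are those of (\ref{5.2})--(\ref{5.3}), not linear algebra.

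Second, and more seriously, your expected conclusion is wrong: the reduced cubic form is \emph{not} always negative definite, and the transition is \emph{not} always Type-I. The quadratic nonlinearity $\gamma_2\Delta(u^2)$, fed back through the center-manifold function $\Phi=O(|v|^2)$, contributes a cubic term on $E_0$ with coefficient proportional to $+\gamma_2^2$ (with positive geometric factors such as $1/(|k_1^c\pm k_2^c|^2-|k_1^c|^2)$), which \emph{destabilizes} the origin and competes with the stabilizing $-\gamma_3$ cubic term. The paper's main theorems (Theorems \ref{tm4}, \ref{tm2}, \ref{tm7.1}) are precisely the resulting dichotomy: continuous (Type-I) transition to $\Sigma_\lambda\approx S^{m-1}$ if $\gamma_3>A\gamma_2^2$, and a catastrophic (Type-II) transition if $\gamma_3<A\gamma_2^2$, where $A$ encodes the lattice geometry through the critical vectors. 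Your plan, as written, would miss the entire Type-II regime. Moreover, in the resonant multiplicity-six case $k_3^c=k_1^c+k_2^c$ (hexagonal lattices), the quadratic term does \emph{not} vanish on $E_0$ at all---the resonance $k_1^c+k_2^c-k_3^c=0$ produces genuine quadratic terms $\tau y_iy_j$ in the reduced system (\ref{8.10}), which dominate the cubics near the origin, force a jump transition whenever $\gamma_2\neq 0$ in most directions, and are responsible for the hexagonally-packed patterns; your framework, which assumes the quadratic part only enters through $\Phi$, breaks down exactly there.
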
}

\numberwithin{equation}{section}
\numberwithin{figure}{section}

\def\la{\label}

\keywords{Cahn-Hilliard equation, pattern formation, center manifold reduction, dynamical transition theory, lattice structures, rolls, squares, hexagons}

\subjclass{35B32, 35B36, 37L60, 37L15, 82B26, 82B30}
\maketitle

\begin{abstract}
This article examines the dynamic phase transitions and pattern formations attributed to binary systems modeled by the Cahn-Hilliard equation. In particular, we consider a two-dimensional lattice structure and determine how different choices of the spanning vectors influence the resulting dynamical tramsitions and pattern formations. As the basic  steady-state loses its linear stability, the binary system undergoes a dynamic transition which is shown to be characterized by both the geometry of the domain and the choice of physical parameters of the model. Unlike rectangular domains, we are able to observe the emergence of hexagonally–packed circles, as well as the familiar rolls and square structures. We begin with the decomposition of our function space into a stable and unstable eigenspace before calculating the center manifold that maps the former to the later. In analyzing the resulting reduced equations, we consider the different multiplicities that the critical eigenvalue can have, which is shown to be geometry-dependent. We briefly consider the long-range interaction model and determine that it produces similar results to the original model. 
\end{abstract}

\tableofcontents

\section{Introduction}
\-\quad\ The Cahn-Hilliard model is a partial differential equation that describes the process of phase separation by which two components of a binary fluid spontaneously separate and form domains pure in each component; see, e.g., Cahn and Hilliard \cite{CH57}, Novick-Cohen and Segel \cite{NS84}, Reichl \cite{reichl}, and Pismen \cite{pmn}. There are intensive mathematical and physical studies on Cahn-Hilliard equations, including e.g. 
Ryan Goh and Arnd Scheel \cite{Scheel15} on pattern formation in the wake of triggered pushed fronts, Hansj\"org Kielh\"ofer \cite{Kielhofer} on stationary pattern solutions for the Cahn-Hilliard model, 
A. A. Nepomnyashchy \cite{Nepomnyashchy2015} on mechanisms that control coarsening and pattern formation, 
Nicolaenko-Scheurer-Temam \cite{Nicolaenko} on  inertial manifolds for pattern-forming CH dynamics, 
George R. Sell \cite{Sell} on  multi-peak/stationary solutions in CH models,  
S. Maier-Paape \& T. Wanner \cite{Wanner} on rigorous analysis of spinodal decomposition and wavelength selection in higher dimensions, 
R. Choksi \& P. Sternberg \cite{Choksi} on  CH energies with long-range interactions and periodic patterns, 
and  
A. Gusak \& K. Tu \cite{Gusak} on phase diagrams and spinodal decomposition and  segregation modeled by  Cahn-Hilliard.   The Cahn-Hilliard model is also used in modeling sharp interfaces of materials such as in Liu and Shen \cite{liu},  Shen and Yang \cite{sy} and the references therein,  for the mixture of two incompressible fluids and its approximations. Many situations can be modeled as a phase separation of binary systems, and the systematic study of solutions to the Cahn-Hilliard equation and their stabilities prove to be useful in the natural sciences. \\

The main objective of this paper is to initiate a study of dynamic transitions and pattern formations on a lattice periodic structure for the Cahn-Hilliard model without or with a long-range interaction. The specific goal is then to explore how the geometry of the spatial domain, the physical parameters $\gamma_2$ and $\gamma_3$, and the control $\lambda$ affect \textbf{1)} the type of phase transitions, \textbf{2)} the structure of the transition states, and \textbf{3)} the emergence of different patterns (rolls, squares, hexagons, etc.).  \\

This article will examine the phase transition and pattern formation that occurs in a lattice domain system. The control parameter $\lambda$ plays a critical role in determining the degeneracy of the basic solution $u=0$ into patterns in the form of new solutions to the model. Some patterns found in the lattice domain include rolls, squares, hexagons, and rectangles in the far field.

It is classical that the Cahn-Hilliard model can be put in the perspective of an infinite dimensional dissipative dynamical system. The mathematical analysis of the model is carried using the dynamical transition theory developed by Ma and Wang \cite{ptd}.  For many problems in sciences, we need to understand  the transitions  from one state to another, and the stability/robustness of the new states. For this purpose, a dynamic transition theory is developed Ma and Wang \cite{ptd}, and is applied to both equilibrium and non-equilibrium phase transitions in nonlinear sciences. The basic philosophy of the dynamic transition theory is to search for  the complete set of transition  states, which are represented by a  local attractor, rather than some steady states or periodic solutions or other type of orbits as part of this local attractor.  A starting point of the dynamic transition  theory  is the introduction of a dynamic classification scheme of dynamic transitions, with which  phase transitions, both equilibrium and non-equilibrium, are classified into three types: Type-I, Type-II and Type-III Mathematically, these transitions  are also respectively  called continuous, jump (or catastrophic) and mixed (or random) transitions. \\

Consider  the Cahn-Hilliard problem studied in this article. First, the key ingredients of the analysis consist of the following. First, the solution on a lattice structure $L$ with dual lattice $L^\ast =\{n_1k_1 + n_2k_2 | (n_1,n_2)\in \Z^2\}$ can be Fourier expanded, where $k_1$. and $k_2$ are spanning vectors for the dual lattice; see Section 2 for details. The Fourier modes correspond to eigenfunctions of the linearized equation. This leads to a precise characterization of the critical thresholds, the principle of exchange of stabilities (PES), and the stable and unstable modes. \\

Second, we derive leading order approximations of the center manifold function, so that the stable modes are written as functions of the unstable/center modes. We then derive the leading order approximation of the reduced system of the original Cahn-Hilliard on the center manifold. The reduced system depends on the number of critical modes, the spatial geometry, and the physical parameters $\gamma_2$, $\gamma_3$, and $\lambda$. The reduced system captures the precise information on types of phase transitions, the structure of the transitions, and the related emerging patterns. For example, in the case where the dimension of the critical space is four, the type of transition is dictated by the sign of \begin{align*}
b=\gamma_3-A\gamma_2^2,
\end{align*} 
with 
\begin{align}
    A=\frac{2}{9} \max \left[ \left(
    \frac{1}{3|k_1^c|^2}+ \frac{2}{|k_1^c-k_2^c|^2-|k_1^c|^2}+\frac{2}{|k_1^c+k_2^c|^2-|k_1^c|^2}\right), \frac{1}{|k_1^c|^2}\right].
\end{align}
Here $k_1^c=n_1^ck_1+n_2^ck_2$ and $k_2^c=n_3^ck_1+n_4^ck_2$ are critical vectors as defined e.g. in  (\ref{3.4}).
If $b>0$, the system undergoes a continuous dynamical transition to a local attractor $\Sigma_\lambda$, homological to a $3D$ sphere $S^3$. Also, $\Sigma_\lambda$ contains two circles of steady states and a two-dimensional torus of steady states. In addition, the solution on $\Sigma_\lambda$ gives rise to square and roll patterns. \\

Third, reduction of our model into a system of reduced equations allows us to find equilibrium solutions and transition types. As mentioned earlier, possible transitions include continuous, jump, and mixed  transitions. In a continuous transition, emerging solutions stay within a neighborhood of the basic solution during bifurcation. Conversely, a jump transition exhibits solutions quickly diverging from the basic solution and approaching the far field. Mixed transitions are those that exhibit behavior of both continuous and jump transitions. \\

Fourth, for call cases with continuous transitions studied, we obtained detailed structure of the set of all transition states $\Sigma_\lambda$, which is homological to an $m-1$ dimensional sphere $S^{m-1}$, consisting of steady states and their unstable manifolds. The steady states on $\Sigma_\lambda$ and their stability are explicitly derived as well. Different patterns of the transition states are examined. Also we like to mention that in the multiplicity six case, hexagon packed patterns also appear, which are absent in the classical Cahn-Hilliard model on rectangular domains.\\

The paper is organized as follows. The Cahn-Hilliard model and its mathematical section are given in Section 2. 
Linear instability and a general dynamical transition theorem are derived in Section 3. Section 4 states them main theorems characterizing the types of dynamical transitions. Theorem 4.1 on multiplicity 4 case is proved in Sections 5.1 \& 5.2. Section 5.3 studies the structure and patterns of the set of  transition states $\Sigma_\lambda \approx S^3$; see Theorem 5.1.  Section 6 studies the multiplicity 2 case and the main theorem is  Theorem~6.1.  Multiplicity 6 case is carefully investigated in Sections 7 \& 8. Section 8 deals with the case where the critical vectors satisfy $k_3^c = k_1^c + k_2^c$, and one has to deal carefully with zeroes appearing  in the denominators in some calculations.  Two appendices A \& B are given for a brief introduction to the dynamical transition theory  and the approximation of center manifold functions used in the article.\\

\noindent
{\bf Notations.} For convenience, here are some of the nations and terminologies used in the article.\\

- for physical parameters $b_1, b_2, b_3$,  see  (\ref{ch-1}) 

- for non-dimensional  parameters $\lambda, \gamma_2, \gamma_3$, see (\ref{2.5})

- for lattice $L$ and dual lattice $L^*$, see (\ref{L})   

- the dual lattice vector is given by $k=n_1k_1+n_2k_2$ with $k_1$ and $k_2$ being the generating vectors and $(n_1,n_2) \neq (0,0)$

- the critical vectors such as $
k_1^c=n_1^ck_1+n_2^ck_2$ are defined in (\ref{3.4}).

\section{Cahn-Hilliard Model}\label{s2}
Since binary systems are conserved, the equations describing the
Helmholtz process and the Gibbs process are the same. Hence,
without distinction we use the term "free energy" to discuss this
problem.  Let $u_A$   and $u_B$ be the concentrations of components $A$  and  $B$ respectively,  then $u_B=1-u_A$.  In a homogeneous state, $u_B={\bar u}_B$ is a constant.  We now consider the case where the Helmholtz  free energy is given by
\begin{equation}
F(u)=F_0+\int_{\Omega} \Big[\frac{\mu}{2}|\nabla u_B|^2+ f(u_B)\Big]dx, \label{8.49-2}
\end{equation}
and the density function $f$ is given by the Bragg-Williams potential, also called 
the Hildebrand theory \cite{reichl},  as follows:
\begin{align}
f= &  \mu_Au_A+\mu_Bu_B+RT[u_A \ln u_A + u_B\ln
u_B]+au_Bu_A,\label{8.122}
\end{align}
where $\mu_A,\mu_B$ are the chemical potentials of $A$ and $B$
respectively, $R$ the molar gas constant, $a>0$ the measure of
repulsion  action between $A$ and $B$.\\

In a homogeneous state, $u_B$ is a constant  in space and is equal to its mean value, i.e., $u_B= \bar u_B$. Let 
$$u=u_B-\bar{u}_B, \qquad u_0=\bar u_B.$$
Using the Landau mean field theory, we have the following Cahn-Hilliard equation
\begin{equation}\label{8.57}
\frac{\partial u}{\partial
t}=-\mu D\Delta^2u+\Delta [  b_1 u^1+ b_2 u^2+ b_3 u^3],
\end{equation}
where the coefficients $k$, $b_1$, $b_2$  and $b_3$  are given by 
\begin{equation}
\label{ch-1}
\begin{aligned}
& b_1 = \frac{D}{2} \frac{d^2 f(u_0)}{du^2}= \left[\frac{RT}{u_0(1-u_0)}- 2 a\right] \frac{D}{2},\\
& b_2=\frac{D}{3 !} \frac{d^3 f(u_0)}{du^3}= \frac{2u_0-1}{6u^2_0(1-u_0)^2}D RT,\\
& b_3=\frac{D}{4 !} \frac{d^4 f(u_0)}{du^4}= \frac{1-3u_0 +3u^2_0}{12u^3_0(1-u_0)^3}DRT, 
\end{aligned}
\end{equation}
where $D$ is the diffusion coefficient. \\

In this paper, we will explore the effort of the spatial geometry on   phase transitions and  pattern formations when  the spatial domain  $U$ is a lattice domain.  Let $l_1,l_2 \in \R^2$ be any set of linearly independent vectors. We consider a two-dimensional lattice $L$ and its dual lattice $L^\ast$ given by 
\begin{equation}\label{L}
    \begin{aligned}
       &  L=\{n_1l_1 + n_2l_2 \quad | \quad (n_1,n_2)\in \Z^2\},\\
        & L^\ast =\{n_1k_1 + n_2k_2\quad  | \quad (n_1,n_2)\in \Z^2\},
        \end{aligned}
\end{equation}
where $ k_i \cd l_j = 2\pi \delta_{ij} $ for $i,j \in \{1,2\}$. Let $U$ be the area enclosed by the parallelogram created by the vectors $l_1$ and $l_2$. \\

The non-dimensional form of  (\ref{8.57}) is given by 
\begin{equation}
\begin{aligned} &u_t=-\Delta ^2u-\lambda \Delta u +\Delta (\gamma_2u^2+\gamma_3u^3), \quad (x,t) \in \R^2 \times \R^+,\\
&u(x+l,t)=u(x,t),  \quad  \forall l\  \in L,\\
    &u(x,0)=\phi(x),\\
    &\int_{U}u(x,t)dx=0
.\end{aligned}\label{2.5}
\end{equation}
The non-dimensional variables and parameters with primes suppressed in (\ref{2.5})  are given, as in \cite{ptd}, by:
\begin{align*}
&x=lx^{\prime}, &&  t=\frac{l^4}{\mu D}t^{\prime}, && u=u_0u^{\prime},\\
&\lambda =-\frac{l^2b_1}{\mu D},&& \gamma_2=\frac{l^2b_2u_0}{\mu D},
&& \gamma_3=\frac{l^2b_3u^2_0}{\mu D},
\end{align*}
where $l$ is a given length,  $u_0=\bar{u}_B$ is the constant
concentration of $B$, and $\gamma_3>0$. \\

 The solutions of (\ref{2.5}) takes the form 
 \begin{align}u(x,t)=\sum\limits_{k \in L^* \backslash \{0\}}(z_{k}(t)e^{ik \cd x}+\conj{z_{k}(t)}e^{-ik \cd x}), \label{2.7}
 \end{align} 
where each point of the lattice can be written as $k=n_1k_1+n_2k_2$ for some integers $(n_1,n_2) \neq (0,0)$, as in Hoyle \cite{hoy}. 
Observe that the solution is periodic in $L$ as desired.
From this, it can be calculated that 
\begin{align}
& 
\label{2.8}
\Delta u(x,t)=\sum\limits_{k \in L^* \backslash \{0\}}-|n_1k_1 + n_2k_2|^2(z_k(t)e^{ik \cd x}+\conj{z_k(t)}e^{-ik \cd x}), \\
& \label{2.9} \Delta^2 u(x,t)=\sum\limits_{k \in L^* \backslash \{0\}}|n_1k_1 + n_2k_2|^4(z_k(t)e^{ik \cd x}+\conj{z_{k}(t)}e^{-ik \cd x}).
\end{align}
To put the model (\ref{2.5}) in the perspective of nonlinear dissipative dynamical systems, we let
\begin{equation}\begin{aligned}
& H  = \{u \in L^2(U) \mid \int_{U}udx=0\},\\
& H_1 = \{u \in H^4(U) \cap H \mid u(x+l,t)=u(x,t), l \in L\},\\
& H_{1/2} = \{u \in H^2(U) \cap H \mid u(x+l,t)=u(x,t), l \in L\}.\end{aligned}\end{equation}
We shall split the linear component of (\ref{2.5}) into two operators: $L_\lambda = -A+B_\lambda:H_1 \rightarrow H$ and write the nonliear term as $G:H_{1/2} \rightarrow H$ by
\begin{equation}
 Au = \Delta^2u, \quad B_\lambda u = -\lambda u, \quad G(u)= \Delta(\gamma_2u^2+\gamma_3u^3). 
 \label{2.11}\end{equation} 
 Then, (\ref{2.5}) can be written as 
 \begin{equation}
 \begin{aligned}
 & \frac{\partial u}{\partial t} = L_\lambda u + G(u),\\
 & u(x,0)= \phi (x).\end{aligned}\label{2.12}\end{equation} 
 It is then classical to show that for any $\phi \in H$, (\ref{2.12}) has a global in time solution \begin{align}
    u\in L^2([0,T];H_1)\cap L^\infty ([0,T];H), \text{ for all }T>0.
\end{align} In other words, (\ref{2.12}) is a well-posed dynamical system.

\section{Dynamic Transitions and Principle of Exchange of Stabilities}

To study the dynamical transitions and pattern formations of (\ref{2.12}), we first examine the linear instability. Consider the linear eigenvalue problem $L_\lambda w=\beta(\lambda) w$ given by 
\begin{align*}
& -\Delta ^2 w -\lambda \Delta w = \beta(\lambda) w,\\
& w(x+l)=w(x),  \quad  \forall l\  \in L, \\
& \int_{U}w(x)dx=0.
\end{align*}
 
 In view of (\ref{2.8}) and (\ref{2.9}), the eigenvalues and eigenfunctions of $L_\lambda$  are 
\begin{align}& \beta_{n_1n_2}(\lambda)=-|n_1k_1+n_2k_2|^4+\lambda |n_1k_1+n_2k_2|^2  \label{3.1}\\
& \qquad \qquad  \nonumber = -|n_1k_1+n_2k_2|^2 (|n_1k_1+n_2k_2|^2-\lambda ),\\
& e_{n_1n_2}=e^{i(n_1k_1 \cd x + n_2k_2 \cd x)}.
\end{align}
Note that with notation $k=n_1k_1+n_2k_2$, these can be written equivalently as 
$$\beta_k=-|k|^2(|k|^2-\lambda), \qquad e_k=e^{ik \cd x},$$ 
and will be used interchangeably henceforth. \\

Let $S\subset \Z^2\setminus \{(0,0)\}$ be the set of all integer weights $(n_1,n_2)$ that minimize the magnitude of the vector $k=n_1k_1+n_2k_2$. More explicitly, denote 
\begin{align}
S&=\{(n_1,n_2)\in \Z^2 \setminus \{(0,0)\}\text{ }|\\ 
\label{3.3}
& \qquad \quad  |n_1k_1+n_2k_2|^2 
= \min_{(p,q)\in \Z^2\setminus \{(0,0)\}}|pk_1+qk_2|^2\}. \nonumber 
\end{align}
It can be seen that the possible values of the cardinality of $S$ are two, four, and six. For notation, when $\# S=6$, the elements of $S$ are $(n_1^c,n_2^c)$, $(-n_1^c,-n_2^c)$, $(n_3^c,n_4^c)$, $(-n_3^c,-n_4^c)$, $(n_5^c,n_6^c)$, and $(-n_5^c,-n_6^c)$. Define the critical vectors of $L^*$ by 
\begin{equation}\label{3.4}
\begin{aligned}
k_1^c=n_1^ck_1+n_2^ck_2,\\k_2^c=n_3^ck_1+n_4^ck_2,\\k_3^c=n_5^ck_1+n_6^ck_2.
\end{aligned}
\end{equation} 
Should $\#S=2$ or $\#S=4$, we will work only with one or two critical vectors, respectively. \\

We now give examples of three lattices in which the critical eigenvalue has multiplicity two, four, and six. When $\#S=4$, the four vectors that have minimal magnitude are the two critical vectors and their opposites. Perhaps the simplest lattice to consider is the square lattice spanned by the vectors $k_1=(1,0)$ and $k_2=(0,1)$. In this case, the elements of the lattice that have minimal magnitude are the two spanning vectors as well as their opposites. \\

When $\#S=2$, only one vector and its additive inverse can achieve minimal magnitude, as is the case with the critical vector $k^c=(\frac{\sqrt{3}}{2}-1,\frac{1}{2})$ of the lattice spanned by the vectors $k_1=(1,0)$ and $k_2=(\frac{\sqrt{3}}{2}, \frac{1}{2})$. Notice that this critical vector is the difference of the two spanning vectors; that is $k^c=k_2-k_1$. \\

When $\#S=6$, we seek three vectors and their opposites with minimal magnitude. The lattice spanned by the vectors $k_1=(1,0)$ and $k_2=(-\frac{1}{2},-\frac{\sqrt{3}}{2})$ has critical vectors $k_1$, $k_2$, and $k_1+k_2=(\frac{1}{2},-\frac{\sqrt{3}}{2})$ as well as their additive inverses. Figure~\ref{f3.1} shows graphs of these three lattices.\\

\begin{figure}
    \centering
    \includegraphics[width=0.99\linewidth]{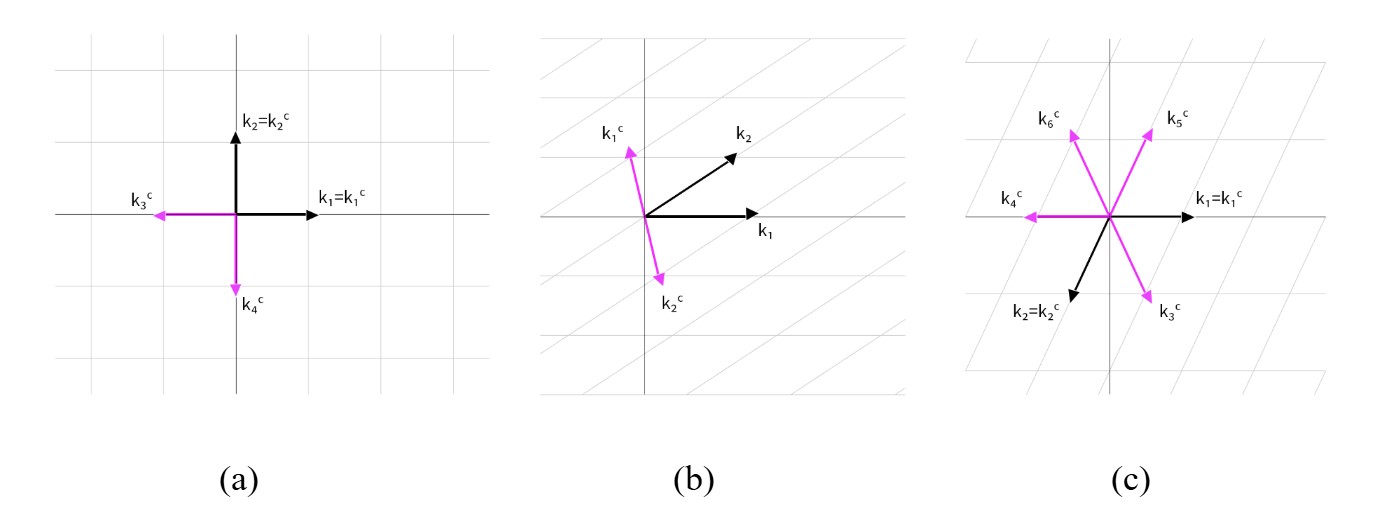}
    \caption{Examples of various lattice structures in which the critical eigenvalue has multiplicity; \textbf{(a)} four, \textbf{(b)} two, and \textbf{(c)} six.}
    \label{f3.1}
\end{figure}

Define the spaces $E_1^\lambda$ and $E_2^\lambda$ by \begin{equation}
    \begin{aligned}
        E_1^\lambda &= \text{span} \{e^{\pm i (n_1^c k_1 + n_2^c k_2) \cd x} \text{ } | \text{ } (n_1^c,n_2^c)\in S\},\\
        E_2^\lambda &= \overline{\text{span} \{u\text{ }|\text{ }\langle u,e_i\rangle = 0 \text{ for all } e_i \in E_1^\lambda \}},
    \end{aligned}
\end{equation}
where the closure is taken in $H$-norm.\\

We define the critical value of the control parameter by 
\begin{equation} \label{critical}
\lambda_0 = |k_1^c|^2 = |k_2^c|^2=|k_3^c|^2.
\end{equation}
 This critical value plays a central role in the stability of the basic solution $u=0$. Specifically, as $\lambda$ crosses the threshold $\lambda_0$, a finite number of the eigenvalues given by (3.1) become positive, and the basic solution becomes linearly unstable. 
This principle of exchange of stabilities (PES) is given mathematically as follows:
\begin{equation}
\begin{aligned}
&
\beta_{n}(\lambda)
 \begin{cases} 
  <0  & \text{if $\lambda<\lambda_0$}, \\
  =0 & \text{if $\lambda=\lambda_0$},\\
 >0 & \text{if $\lambda>\lambda_0$},\\
 \end{cases}  && \text{ for } n \in S,
 \\
 &\beta_{n_1n_2}(\lambda_0)<0 && \text{ for } (n_1,n_2)\in \Z \times \Z \setminus (\{(0,0)\}\cup S).
\end{aligned}
\end{equation}
 Thus, with critical value $\lambda_0$, the eigenvalue $-|k_1^c|^2 (|k_1^c|^2 -\lambda)$, has either multiplicity two, four, or six with basis vectors in the multiplicity six case for the eigenspace 
 \begin{equation}\begin{aligned} 
 & \{e_1 = e^{i(k_1^c \cd x)},e_2 = e^{-i(k_1^c \cd x)},e_3 = e^{i(k_2^c\cd x)},\\
 & \qquad e_4 = e^{-i(k_2^c \cd x)},e_5=e^{i(k_3^c\cdot x)}, e_6=e^{-i(k_3^c\cdot x)}\} ,\end{aligned}\end{equation} with the number of vectors equal to the cardinality of $S$. We now verify the existence of a dynamical phase transition of (\ref{2.12}) as $\lambda$ becomes larger than $\lambda_0$. Based on \cite[Theorem 2.1.3]{ptd}, which is restated in Theorem~\ref{tA.1} in the appendix, we have the following dynamical transition theorem:

\begin{theorem}[Existence of Transition]\label{tm3.1}
    The system (\ref{2.12}) undergoes a dynamical transition from the basic state $u=0$ as the control parameter $\lambda$ crosses the critical threshold $\lambda_0$. The transition is one of the three types: continuous, catastrophic, or random. 
\end{theorem}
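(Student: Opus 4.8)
The plan is to invoke the abstract dynamic transition theorem of Ma and Wang (the result restated as Theorem~\ref{tA.1} in the appendix) and simply verify that its three structural hypotheses are met by the operator $L_\lambda + G$ defined in (\ref{2.11})--(\ref{2.12}). That abstract theorem says: if a parameter-dependent linear operator satisfies the principle of exchange of stabilities at a critical value $\lambda_0$, if the linearized operator has the requisite spectral gap (the remaining eigenvalues stay bounded away from the imaginary axis near $\lambda_0$), and if the nonlinearity is a suitably bounded/compact perturbation so that the equation generates a well-posed dissipative semiflow admitting a center manifold reduction, then the system necessarily undergoes a dynamic transition at $\lambda_0$ which is exactly one of the three types --- continuous (Type-I), catastrophic (Type-II), or random (Type-III). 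So the proof is a checklist, not a computation.

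First I would record well-posedness: by the last paragraph of Section~\ref{s2}, for every $\phi \in H$ the problem (\ref{2.12}) has a global solution in $L^2([0,T];H_1)\cap L^\infty([0,T];H)$, so the equation defines a semiflow on $H$, and the standard a priori energy estimates for the Cahn-Hilliard equation give dissipativity (existence of an absorbing set), which is the regularity/dissipativity hypothesis of the abstract theorem. Second, I would check the spectral hypotheses: from (\ref{3.1}) the eigenvalues are $\beta_{n_1n_2}(\lambda) = -|n_1k_1+n_2k_2|^2(|n_1k_1+n_2k_2|^2 - \lambda)$, which depend continuously (indeed analytically) on $\lambda$; the PES display immediately preceding the theorem shows that for $n \in S$ the eigenvalue $\beta_n$ is negative for $\lambda < \lambda_0$, zero at $\lambda=\lambda_0$, and positive for $\lambda>\lambda_0$, while all other eigenvalues stay strictly negative at $\lambda_0$. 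Because the set of attainable values $|n_1k_1+n_2k_2|^2$ for $(n_1,n_2)\notin S\cup\{(0,0)\}$ is discrete and bounded below away from $\lambda_0$, there is a genuine spectral gap uniformly for $\lambda$ in a neighborhood of $\lambda_0$, giving the center manifold with the correct finite dimension $m=\#S\in\{2,4,6\}$. Third, the nonlinearity $G(u)=\Delta(\gamma_2 u^2 + \gamma_3 u^3)$ maps $H_{1/2}\to H$ and, being of higher order in $u$, has vanishing linearization at $u=0$, so it is the admissible perturbation required by the abstract framework.

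With all three hypotheses verified, the abstract theorem applies verbatim and yields the existence of a dynamic transition of (\ref{2.12}) from $u=0$ as $\lambda$ crosses $\lambda_0$, together with the trichotomy of transition types. I expect no serious obstacle here: the only point that needs a sentence of care is confirming the \emph{uniform} spectral gap, i.e. that $\inf\{\,|n_1k_1+n_2k_2|^2 : (n_1,n_2)\in\Z^2\setminus(\{(0,0)\}\cup S)\,\} > \lambda_0$ and that the $\beta$'s for those indices remain negative on a full neighborhood of $\lambda_0$ --- this is immediate from the definition (\ref{3.3}) of $S$ as the minimizers and the continuity of $\beta_{n_1n_2}$ in $\lambda$. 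Everything else is bookkeeping: quoting well-posedness from Section~\ref{s2}, quoting the PES display, and citing \cite[Theorem 2.1.3]{ptd}. The identification of \emph{which} of the three types occurs, and the detailed structure of the transition set, is precisely what the later sections (Theorems 4.1, 5.1, 6.1, and the multiplicity-six analysis) are devoted to, so the present statement is deliberately the weak, unconditional existence result that sets the stage.
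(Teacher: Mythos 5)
Your proposal is correct and follows exactly the route the paper takes: the paper establishes the PES display immediately before the theorem and then invokes \cite[Theorem 2.1.3]{ptd} (restated as Theorem~\ref{tA.1}), with well-posedness already recorded at the end of Section~\ref{s2}. Your additional remarks on the uniform spectral gap (discreteness of the values $|n_1k_1+n_2k_2|^2$) and on $G$ being a higher-order perturbation are exactly the hypothesis checks the paper leaves implicit, so this is the same argument, just written out more carefully.
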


\begin{remark}
  {\rm  The type of transition in this theorem is dictated by the nonlinear interaction of stable and unstable modes, and  is dependent on the geometry of the lattice and the system parameters $\lambda$, $\gamma_2$, and $\gamma_3$. The theory and technical tools of the analysis is dynamical transition theory and center manifold techniques. This will be the main focus of the remaining part of the paper.
}
\end{remark}

\section{Main Dynamic Transition Theorems}

\subsection{Dynamical transition theorems}
As we stated in Theorem~\ref{tm3.1},  the system (\ref{2.12}) always undergoes a dynamical transition from the basic state $u=0$ as the control parameter $\lambda$ crosses the critical threshold $\lambda_0$. We are now ready to determine the type of transitions, dictated by the nonlinear interaction of stable and unstable modes.

First we consider the case where $S$ has cardinality four. Let 
\begin{equation}
\begin{aligned}E_1^\lambda&=\text{span}\{e_1(\lambda), e_2(\lambda), e_3(\lambda), e_4(\lambda)\},  \end{aligned}
\end{equation}
denote the unstable and stable eigenspaces, respectively.  Here 
$$
 \{e_1 = e^{i(k_1^c \cd x)},\quad e_2 = e^{-i(k_1^c \cd x)}, \quad e_3 = e^{i(k_2^c\cd x)},  \quad e_4 = e^{-i(k_2^c \cd x)}\}. $$

We have the following main dynamic transition  theorem.
\begin{theorem}[Transition Types of Multiplicity Four Case]\label{tm4}
Consider system (\ref{2.12}). Let the multiplicity of $\beta_1(\lambda_0)$ be four. Let 
\begin{align}\label{4.2}
    A=\frac{2}{9} \max \left[ \left(
    \frac{1}{3|k_1^c|^2}+ \frac{2}{|k_1^c-k_2^c|^2-|k_1^c|^2}+\frac{2}{|k_1^c+k_2^c|^2-|k_1^c|^2}\right), \frac{1}{|k_1^c|^2}\right].
\end{align}
Then the following assertions hold true: 
\begin{enumerate}

\item If \begin{align}\gamma_3 > A\gamma_2 ^2,\end{align} then as $\lambda$ crosses $\lambda_0$, the system (2.11) undergoes a continuous (Type I) dynamic transition to a local attractor $\Sigma_\lambda$, homological to $S^3$.\\
\item If \begin{align}\gamma_3 < A\gamma_2 ^2,
\end{align} 
then as $\lambda$ crosses $\lambda_0$, the system undergoes a catastrophic  (Type II) dynamic transition.
\end{enumerate}
\end{theorem}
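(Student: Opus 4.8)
The plan is to reduce the infinite-dimensional system \eqref{2.12} to its four-dimensional center manifold spanned by $E_1^\lambda$ and then to analyze the resulting ODE system for the complex amplitudes $z_1,z_2,z_3,z_4$ attached to $e_1,\dots,e_4$. Since the nonlinearity $G(u)=\Delta(\gamma_2 u^2+\gamma_3 u^3)$ has both quadratic and cubic parts, and the critical eigenspace consists of the modes $\pm k_1^c,\pm k_2^c$, the quadratic term does not by itself produce a resonant (critical) contribution in the multiplicity-four case — the products $e^{\pm 2ik_j^c\cdot x}$, $e^{\pm i(k_1^c\pm k_2^c)\cdot x}$ all lie in the stable space $E_2^\lambda$. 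Therefore the leading-order reduced dynamics will be cubic, with an effective cubic coefficient that mixes the genuine cubic constant $\gamma_3$ with a $\gamma_2^2$ contribution coming from the second-order center manifold function. Concretely, I would first compute the center manifold function $\Phi(z)$ to quadratic order by solving $L_\lambda \Phi = -P_2 G_2(z)$ at $\lambda=\lambda_0$, where $P_2$ is the projection onto $E_2^\lambda$ and $G_2$ is the quadratic part of $G$; this is an algebraic solve mode-by-mode, and the denominators $|k_1^c-k_2^c|^2-|k_1^c|^2$ and $|k_1^c+k_2^c|^2-|k_1^c|^2$ and $|2k_1^c|^2-|k_1^c|^2 = 3|k_1^c|^2$ that appear in $A$ are exactly the values $-\beta$ at those non-critical modes, i.e. the reciprocals of the eigenvalues one divides by when inverting $L_{\lambda_0}$ on $E_2^\lambda$.

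Second, I would substitute $u = \sum_{j} z_j e_j + \Phi(z)$ into \eqref{2.12} and project onto $E_1^\lambda$ to obtain the reduced system, keeping terms up to cubic order. The quadratic self-term contributes nothing at first order; the cross term between the quadratic nonlinearity and $\Phi$ (itself quadratic) contributes a cubic term proportional to $\gamma_2^2$; the genuine cubic nonlinearity contributes a term proportional to $\gamma_3$. After collecting, the reduced equations should take the gradient-like normal form
\begin{align*}
\dot z_1 &= \beta_1(\lambda) z_1 - z_1\big[a_1|z_1|^2 + a_2(|z_2|^2) + a_{12}(|z_3|^2+|z_4|^2)\big] + \text{h.o.t.},
\end{align*}
and similarly for $z_2,z_3,z_4$ by symmetry (with $|z_1|^2$ and $|z_2|^2$ together because $z_2=\bar z_1$ on the relevant invariant subspace, etc.), where the coefficients $a_1$ (self-interaction) and $a_{12}$ (cross-interaction between the two critical wave directions) are explicit linear combinations of $\gamma_3$ and $\gamma_2^2/|k^c|^2$-type quantities. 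The structure of the attractor — rolls when one amplitude dominates, squares when $|z_1|=|z_3|$ — is then governed by comparing $a_1$ with $a_{12}$, and the overall transition type (Type I vs Type II) by the positivity of the relevant combination, which is where $b=\gamma_3 - A\gamma_2^2$ and the $\max$ in \eqref{4.2} come from: the $\max$ picks out whichever interaction coefficient (self vs cross) is the first to change sign as $\gamma_3/\gamma_2^2$ decreases, since the transition is catastrophic as soon as \emph{any} direction in the critical sphere has a net destabilizing cubic coefficient.

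Third, with the reduced system in hand I would invoke the attractor bifurcation / dynamical transition theorem (Theorem~\ref{tA.1} in the appendix, i.e. \cite{ptd}): if the cubic form is negative definite on $E_1^\lambda$ (equivalently $\gamma_3 > A\gamma_2^2$, so $b>0$), the transition is continuous and the bifurcated attractor $\Sigma_\lambda$ is homeomorphic to the sphere $S^{m-1}=S^3$; I would verify the sphere claim by exhibiting $\Sigma_\lambda$ as (a graph over) the set of nonzero equilibria of the reduced gradient system, which for a negative-definite quartic potential on $\mathbb{R}^4$ retracts onto a topological $S^3$. If instead $b<0$, there is a direction along which the cubic coefficient is positive, the origin cannot be locally stable from that side after crossing, no local attractor bifurcates on the $\lambda>\lambda_0$ side near $u=0$, and the transition is catastrophic (Type II) — this follows from the corresponding case of the transition theorem. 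The two cases of the theorem are thus dispatched by the sign of $b$.

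The main obstacle I anticipate is the bookkeeping in computing the cubic reduced coefficients correctly, in particular the $\gamma_2^2$ contribution routed through the center manifold. One must track which quadratic mode products $e_i e_j$ land on which stable Fourier mode, solve for $\Phi$ there, then compute which products of $\Phi$ with the critical modes feed \emph{back} onto each critical mode $e_k$ — and the combinatorics differ for the self-interaction channel (producing the $1/(3|k_1^c|^2)$ and $1/|k_1^c|^2$ pieces) versus the cross-interaction channel (producing the $1/(|k_1^c\pm k_2^c|^2 - |k_1^c|^2)$ pieces). Getting the numerical prefactor $\tfrac{2}{9}$ and the placement of the factor $2$ in front of the cross terms right requires care with the $\Delta$ acting on the products (each product $e^{ik\cdot x}$ picks up $-|k|^2$) and with the multinomial counting of equal amplitudes; everything else — positivity of $\beta_1$, well-posedness, existence of the center manifold, and the topological identification of $\Sigma_\lambda$ — is standard once the reduced normal form is established. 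A secondary subtlety, flagged by the authors for the companion multiplicity-six case but worth noting here, is ensuring none of the denominators vanish, i.e. that $|k_1^c\pm k_2^c|^2 \ne |k_1^c|^2$ in the multiplicity-four regime, which must be argued from $\#S=4$ (if such an equality held, the corresponding vector would also be critical, contradicting $\#S=4$, or one would be in a degenerate hexagonal configuration handled separately).
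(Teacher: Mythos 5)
Your proposal follows essentially the same route as the paper: compute the quadratic-order center manifold function mode-by-mode (with exactly the denominators $3|k_1^c|^2$ and $|k_1^c\pm k_2^c|^2-|k_1^c|^2$ you identify), feed it back through the quadratic nonlinearity to get an effective cubic reduced system in the amplitudes, and read off Type I versus Type II from the signs of the self- and cross-interaction coefficients, whose simultaneous negativity is precisely $\gamma_3>A\gamma_2^2$ with the $\max$ in (\ref{4.2}). The only cosmetic difference is that the paper decides stability of the origin of the cubic system by examining the straight-line orbits $r_1=0$, $r_2=0$, $r_1=r_2$ together with the no-elliptic-orbit (gradient) lemma, whereas you phrase it as negative-definiteness of the quartic potential on the first quadrant — these are equivalent — and your remark that $\#S=4$ forces the denominators to be nonzero is a correct point the paper leaves implicit in this case.
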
 

A few remarks are now in order. 

\medskip

{\bf 1).}   If the system undergoes a Type II transition, then the following are true:
 \begin{enumerate}
\item Let $\lambda <\lambda_0$ and $\lambda$ be near $\lambda_0$, then the system undergoes a sub-critical bifurcation.\\
   \item There exists $\lambda^\ast<\lambda_0$ at which a saddle node bifurcation occurs.
   \end{enumerate}
    These statements are true regarding all Type II transitions in future theorems.

\medskip

{\bf 2).}  In the case where  the pairs of $(n_1,n_2)$ that minimize $|n_1k_1+n_2k_2|^2$ are $(n_1^c,n_2^c)$ and $(-n_1^c,-n_2^c)$, we can derive a similar theorem, Theorem~\ref{tm2},  on the dynamic transition of the system. In this case, the parameter $A$ given in (\ref{4.2}) takes a much simpler form as 
\begin{equation}
\label{4.5}
A= \frac{2}{9|k_c|^2}.
\end{equation}

\medskip

{\bf 3).} The multiplicity six case is more delicate, and is studied in Section~\ref{s7}  for the non-resonant case, and in Section \ref{s8} for  the resonant case. For the multiplicity six case, the critical wave modes $k_1^c$, $k_2^c$. and $k_3^c$ are given by (\ref{3.4}); see also (\ref{7.1}).

The non-resonant case is defined as the case where  $k_3^c = a k_1^c + b k_2^c$ such that $a,b \neq 0$ and $(a,b) \in \Z^2 \setminus \{(1,1),(-1,1),(-1,-1),(1,-1)\}$. Therefore in  the resonant case $k_3^c$ is a linear combination of $k_1^c$ and $k_2^c$ with $\pm1$-coefficients, such as 
$$k_3^c = k_1^c + k_2^c.$$

For the non-resonant case, the parameter $A$ becomes much more complex, and is given explicitly by (\ref{7.8}) and (\ref{7.9}). The dynamic transition theorem in this case is then stated  in Theorem~\ref{tm7.1}.\\

Notice that in all above cases, the lowest nonlinear terms in  the center-manifold reduced equations are cubic, leading either continuous or catastrophic dynamic transitions.  In the resonance case, the reduced systems contains quadratic terms; see (\ref{8.3}) for the general resonant case, and (\ref{8.10}) for the case with evenness condition in spatial variable $x$. The analysis of (\ref{8.10}) is a bit involved and will be studied elsewhere. With the  evenness assumption, the dynamic transition is fully characterized in Theorem~\ref{tm8.1}.

\medskip

{\bf 4).} For the Cahn-Hilliard equation with long-range interaction, more complex patterns can emerge as the critical wave vector $k^c=n_1^ck_1+n_2^ck_2$ is related to the long-range interaction term in the manner given by  (\ref{9.5}):
 $$
 |k^c|^2 \sim \sqrt{\sigma}.
$$
 Consequently, the larger the $\sigma$, the larger $|k_c|$, leading to the formation of more complex patterns. \\
 
 In addition, in this case the parameter $A$ dictating the transition is given by 
$$A=\frac{8|k_1^c|^2}{9(4|k_1^c|^4 - \sigma)}.$$

\subsection{Pattern formation and structure of transition states}
In the theorems discussed above, when the transition is continuous, the set of transition states, $\Sigma_\lambda$ is homological to an $m-1$ dimensional sphere $S^{m-1}$, where $m$ is the dimension of unstable space. Another objective of the paper is to characterize the detailed 
structure of $\Sigma_\lambda$ and patterns emerged in the transition states.\\

The structure $\Sigma_\lambda \sim S^3$ given in Theorem~\ref{tm4} is studied in Section~\ref{s5.3} with examples given in Section~\ref{s5.4}.
In particular, we show that  $\Sigma_\lambda$ contains two circles of steady states and one tori of steady states.\\

Other cases are analyzed in detail in Sections~\ref{s7.2}, \ref{s7.3}, \ref{s8.3} and \ref{s8.4}.

\section{Proof of Theorem~\ref{tm4}}
The main idea of the proof  is 1)  to reduce the original PDE system to the center/unstable manifold generated by the unstable modes, as $\lambda$ crosses the critical value $\lambda_0$, and 2) to analyze the reduced system on the center manifold. 

\subsection{Center manifold reduction}
  Let $P_i$ denote the canonical projection of $H$ into $E_i$ for $i=1,2$. Then \textit{u(x,t) }belongs to the direct sum of these two spaces and can be written as 
  \begin{align} u(x,t)=y_1e_1+y_2e_2+y_3e_3+y_4e_4+z,\label{new4.2}
  \end{align} 
  where \(z\in E_2^\lambda\) is the stable component. Equation (\ref{2.12}) can thus be written in the form \begin{align}(y_t\cdot e+ z_t)=L_\lambda (y\cdot e +z)+G(y\cdot e+z),\label{4.3}
  \end{align} 
  where \(y=(y_1,y_2,y_3,y_4)\) and \(e=(e_1,e_2,e_3,e_4)\). 
Let \(e_i\in e\) be any of the four unstable eigenfunctions. Consider the projection of the Cahn-Hilliard equation that results from taking the inner product of both sides of (\ref{4.3}) with \(e_i\). Using the mutual orthogonality of the eigenfunctions, we have \begin{align}\int_{U} \dot{y_i} e_i\bar{e_i} dx=\int_{U} L_\lambda (y_i e_i) \bar{e_i}dx + \int_{U} G(y\cdot e+z) \bar{e_i}dx.\end{align} Here integration is to be taken over the parallelogram \(U\). Applying the linear operator to \(e_i\) and simplifying, we are left with \begin{align}\dot{y_i} \int_{U} e_i \bar{e_i}dx=y_i \beta_{n_1^cn_2^c}(\lambda) \int_{U} e_i \bar{e_i}dx + \int_{U} G(y \cdot e + z) \bar{e_i}dx,\nonumber \end{align} which implies that
\begin{align}\dot{y_i}=y_i \beta_{n_1^cn_2^c}(\lambda)+ \frac{1}{|e_i|^2} \int_{U} G(y \cdot e + \Phi (y)) \bar{e_i}dx,\label{4.6}\end{align}
where the norm of the eigenfunction is the area of the parallelogram \(|l_1 \times l_2|\). Here we have written \(z=\Phi(y)\) which is the center manifold function that maps the unstable eigenspace to the stable. 
Because the linear operator \(L\) is diagonal near \(\lambda = \lambda_0\), we can use the following formula for the center manifold; see Theorem~\ref{tB.1} in the appendix, which is \cite[Theorem A.1.1]{ptd}) recalled here: 
\begin{align}-L_\lambda \Phi(y)=P_2G(y\cdot e) +h.o.t.,\label{4.7}
\end{align} 
where \(P_2\) denotes projection onto the stable eigenspace \(E_2^\lambda\). Fourier decomposing the center manifold as \(\Phi(y)=\sum_{k=5}^{\infty} \varphi_k (y)e_k\) allows us to write the left hand side of (\ref{4.7}) as \begin{align}-L_\lambda \Phi (y)=-\sum_{k=5}^{\infty}\varphi_k L_\lambda e_k=-\sum_{k=5}^{\infty} \varphi_k \beta_k (\lambda) e_k.\end{align} Let \(e_k \in E_2^\lambda\) be any stable eigenfunction. Then taking the inner product of (\ref{4.7}) with \(e_k\) yields \begin{align}-\beta_k |e_k|^2 \varphi_k=\int_{U} G(y\cdot e) \bar{e_k}dx +  h.o.t.,\end{align} which implies that
\begin{align}\varphi_k(y)=-\frac{1}{\beta_k |e_k|^2}\int_{U} G(y\cdot e) \bar{e_k}dx +h.o.t.,\label{4.10}\end{align}
where \(\varphi_k\) represents the Fourier coefficient of \(\Phi(y)\) with basis function \(e_k\). We shall use the following eight stable eigenfunctions for our approximation of the center manifold: 
\begin{equation}\begin{aligned}e_{5,6}&=e^{\pm2ik_1^c \cdot x},&e_{7,8}&=e^{\pm2ik_2^c \cdot x},\\e_{9,10}&=e^{\pm i(k_1^c \cdot x + k_2^c \cdot x)},&e_{11}&=e^{i(-k_1^c \cdot x + k_2^c \cdot x)},\\e_{12}&=e^{i(k_1^c \cdot x - k_2^c \cdot x)}.\end{aligned}\end{equation} Associated with each of these eigenfunctions is a coefficient in \(y\) that is found by means of equation (\ref{4.10}). Let \(e_k \in \{e_i\}_{i=5}^{i=12}\) be any one of these stable eigenfunctions. Per equation (\ref{4.10}), we have \begin{align}\varphi_k(y)=-\frac{1}{\beta_k |e_k|^2}\int_{U} \Delta(\gamma_2 (y\cdot e)^2+\gamma_3(y\cdot e)^3)\bar{e_k}dx.\end{align}
Using integration by parts and the fact that \(\Delta \bar{e_k}=|k|^2\bar{e_k}\), we can write
\begin{align}\varphi_k(y)=-\frac{|k|^2}{\beta_k |e_k|^2} \int_{U} (\gamma_2 (y\cdot e)^2+\gamma_3(y\cdot e)^3)\bar{e_k}dx.\label{4.13}\end{align}
The orthogonality of the eigenfunctions ensures that \(\int e_i\bar{e_j}dx = |e_j|^2\delta_{ij}\). When the nonlinear term of (\ref{4.13}) is expanded,  we see that the integral of every term vanishes save for those of \(\gamma_2(y \cdot e)^2+\gamma_3(y\cdot e)^3\) that have eigenfunction \(e_k\). More specifically, take \(e_5=e^{2ik_1^c\cdot x}\). Dropping the cubic term, we have \begin{align} \varphi_5(y) = -\frac{|k_{2n_1^c2n_2^c}|^2}{\beta_{2n_1^c2n_2^c}|e_5|^2}\int_U \gamma_2(y_1e_1+y_2e_2+y_3e_3+y_4e_4)^2\bar{e_5}dx,\end{align}
where the subscript on \(k_{2n_1^c2n_2^c}\) denotes the integers in the linear combination of the eigenfunction \(e_5=e^{i(2k_1^c\cdot x + 0k_2^c\cdot x)}\). Since \(\bar{e_5}=e^{-2ik_1^c\cdot x}\), we are searching for terms of the square that contain the eigenfunction \(e^{2ik_1^c\cdot x}\). Upon expanding, we observe that the only such term is \(\gamma_2y_1^2e_1^2\), since \(e_1^2=e_5\). Therefore the integral of all other terms is zero and we are left with
\begin{align}
\varphi_5(y)&=-\frac{|k_{2n_1^c2n_2^c}|^2}{\beta_{2n_1^c2n_2^c}|e_5|^2}\int_{U} \gamma_2y_1^2e_1^2\bar{e_5}dx  \\
&=-\frac{\gamma_2|k_{2n_1^c2n_2^c}|^2}{\beta_{2n_1^c2n_2^c}|e_5|^2} y_1^2 \int_{U} dx\nonumber \\
&=-\frac{\gamma_2|k_{2n_1^c2n_2^c}|^2}{\beta_{2n_1^c2n_2^c}}y_1^2. \nonumber\end{align}
where again \(k_{2n_1^c2n_2^c}=2k_1^c+2k_2^c\) and \(\beta_{2n_1^c2n_2^c}=-|k_{2n_1^c2n_2^c}|^2(|k_{2n_1^c2n_2^c}|^2-\lambda)\) is the eigenvalue of \(e_5\). 
We can obtain the other Fourier coefficients of the center manifold by proceeding in a similar manner for the rest of the stable eigenfunctions. Listed here, the coefficients are

\begin{equation}
\begin{aligned}\varphi_{5,6,7,8}&=-\frac{\gamma_2 |k_{(2n_1^c2n_2^c)}|^2}{\beta_{(2n_1^c2n_2^c)}}y_{1,2,3,4}^2,\\\varphi_9&=-\frac{2\gamma_2 |k_{(n_1^c+n_3^c,n_2^c+n_4^c)}|^2}{\beta_{(n_1^c+n_3^c,n_2^c+n_4^c)}} y_1y_3,\\\varphi_{10}&=-\frac{2\gamma_2 |k_{(-n_1^c-n_3^c,-n_2^c-n_4^c)}|^2}{\beta_{(-n_1^c-n_3^c,-n_2^c-n_4^c)}} y_2y_4,\\\varphi_{11}&=-\frac{2\gamma_2 |k_{(-n_1^c+n_3^c,-n_2^c+n_4^c)}|^2}{\beta_{(-n_1^c+n_3^c,-n_2^c+n_4^c)}} y_2y_3,\\\varphi_{12}&=-\frac{2\gamma_2 |k_{(n_1^c-n_3^c,n_2^c-n_4^c)}|^2}{\beta_{(n_1^c-n_3^c,n_2^c-n_4^c)}} y_1y_4.\label{4.16}\end{aligned}\end{equation}
Writing the center manifold as \(\Phi(y)=\sum_{k=5}^{12} \varphi_k(y) e_k\) and returning to equation (\ref{4.6}), we can view the nonlinear term as
\begin{align}\gamma_2 P_1 \Delta (\sum_{j=1}^{4}y_j e_j+ \Phi)^2+\gamma_3 P_1\Delta (\sum_{j=1}^{4}y_je_j + \Phi)^3.\end{align}
Letting \(e_k \in E_1^\lambda\) and dropping higher-order terms, we have
\begin{align}\gamma_2 \int_{U} \Delta (\sum_{j=1}^{4}y_je_j)^2 \bar{e_k}dx +2\gamma_2 \int_{U} \Delta ((\sum_{j=1}^{4}y_je_j)\Phi) \bar{e_k} dx + \gamma_3 \int_{U} \Delta (\sum_{j=1}^{4}y_je_j)^3 \bar{e_k} dx.\end{align}
Using integration by parts and the fact that \(\Delta \bar{e_k}= |k|^2 \bar{e_k}\), we're left with
\begin{align} |k|^2 (\gamma_2 \int_{U} (\sum_{j=1}^{4}y_je_j)^2\bar{e_k}dx + 2\gamma_2 \int_{U} (\sum_{j=1}^{4}y_je_j) \Phi \bar{e_k}dx + \gamma_3 \int_{U} (\sum_{j=1}^{4}y_je_j)^3\bar{e_k}dx).\label{4.19}\end{align}
Consider the differential equation for \(y_1\) in the system (\ref{4.6}). We have, along with (\ref{4.19}), that
\begin{align}
\dot{y_1}&=y_1\beta_{n_1^cn_2^c}(\lambda)+\frac{|k_1^c|^2}{|e_1|^2}(\gamma_2\int_{U}(\sum_{j=1}^{4}y_je_j)^2\bar{e_1}dx \label{4.20} \\
&+2\gamma_2\int_{U}(\sum_{j=1}^{4}y_je_j)\Phi \bar{e_1}dx+\gamma_3\int_U(\sum_{j=1}^{4}y_je_j)^3\bar{e_1}dx).
\nonumber \end{align}
We search for cross-terms among \((\sum_{j=1}^{4}y_je_j)^2\) that contain the eigenfunction \(e_1=e^{ik_1^c\cdot x}\). However, observation shows that there are no such terms. This holds true, in fact, for each of the four differential equations in (\ref{4.6}). Now looking at the term \((\sum_{j=1}^{4}y_je_j)\Phi\) and using our expansion of the center manifold, we see that the terms
\begin{equation}\begin{aligned} y_2e_2\varphi_5e_{5}, \text{ } y_3e_3\varphi_{12}e_{12}, \text{ } y_4e_4\varphi_9e_9 ,\nonumber \end{aligned}\end{equation}
all contain the eigenfunction \(e_1\) once simplified. Thus, the only non-zero terms that result from the second integral in (\ref{4.20}) are 
\begin{align} 2\gamma_2|e_1|^2(y_2\varphi_5+y_3\varphi_{12}+y_4\varphi_9). \nonumber \end{align} 
Finally, consider the term \((\sum_{j=1}^{4}y_je_j)^3\). We again seek cross-terms of this cube that contain the eigenfunction \(e_1\) so that the integral may be non-trivial. The terms we find are
\begin{align} 3y_1^2e_1^2y_2e_2 , \text{ } 6y_1e_1y_3e_3y_4e_4. \nonumber \end{align} 
Thus the final integral in (\ref{4.20}) becomes 
\begin{align} \gamma_3|e_1|^2(y_1^2y_2+y_1y_3y_4). \nonumber \end{align}
Combining these results in the original differential equation (\ref{4.6}) yields the following reduced equation
\begin{align} \dot{y_1} = y_1\beta_{n_1^cn_2^c}(\lambda) + |k_1^c|^2(2\gamma_2(y_2\varphi_5+y_3\varphi_{12}+y_4\varphi_9)+\gamma_3(3y_1^2y_2+6y_1y_3y_4)). \label{4.26}\end{align}
Using the coefficients found in (\ref{4.16}) and the expressions of the eigenvalues in (\ref{3.1}), we can write \(y_2\varphi_5+y_3\varphi_{12}+y_4\varphi_9\) as
\begin{equation}\begin{aligned} \gamma_2(-\frac{y_1^2y_2}{|k_{(2n_1^c,2n_2^c)}|^2-\lambda}-\frac{2y_1y_3y_4}{|k_{(n_1^c-n_3^c,n_2^c-n_4^c)}|^2-\lambda}-\frac{2y_1y_3y_4}{|k_{(n_1^c+n_3^c,n_2^c+n_4^c)}|^2-\lambda})\\=\gamma_2(-\frac{y_1^2y_2}{4|k_1^c|^2-\lambda}-\frac{2y_1y_3y_4}{|k_1^c-k_2^c|^2-\lambda}-\frac{2y_1y_3y_4}{|k_1^c+k_2^c|^2-\lambda}).\nonumber \end{aligned}\end{equation}
Using this, we can write (\ref{4.26}) more conveniently as 
\begin{align*}
 \dot{y_1}&=-|k_1^c|^2(|k_1^c|^2-\lambda)y_1 \\
 &-2\gamma_2^2|k_1^c|^2(\frac{-2y_1y_3y_4}{|k_1^c-k_2^c|^2-\lambda}-\frac{2y_1y_3y_4}{|k_1^c+k_2^c|^2-\lambda}+\frac{y_1^2y_2}{-4|k_1^c|^2-\lambda})\nonumber \\
 &+\gamma_3|k_1^c|^2(3y_1^2y_2+6y_1y_3y_4).
 \end{align*}
Following this procedure for the other three differential equations in (\ref{4.6}) yields a system of reduced equations which will be used to determine the transition type and stability of solutions of the Cahn-Hilliard equation. \\

By omitting the higher order terms $o(3) \vcentcolon = o(|y^3|) + O(|y^3|\beta_1(\lambda)|) $, the reduced system of (2.11) on the center manifold is given by the following: 
\begin{align*}
    y_{1t}=&  \beta_1(\lambda)y_1 - 2\gamma_2^2 |k_1^c|^2 (\frac{-2y_1y_3y_4}{|k_1^c-k_2^c|^2-\lambda}-\frac{2y_1y_3y_4}{|k_1^c+k_2^c|^2-\lambda}+\frac{y_1^2y_2}{-4|k_1^c|^2+\lambda})\\
   & -\gamma_3 |k_1^c|^2(3y_1^2y_2+6y_1y_3y_4),\\
    y_{2t}=& \beta_1(\lambda)y_2 - 2\gamma_2^2 |k_1^c|^2 (\frac{-2y_2y_3y_4}{|k_1^c-k_2^c|^2-\lambda}-\frac{2y_2y_3y_4}{|k_1^c+k_2^c|^2-\lambda}+\frac{y_1y_2^2}{-4|k_1^c|^2+\lambda})\\
   & -\gamma_3 |k_1^c|^2(3y_1y_2^2+6y_2y_3y_4),\\
    y_{3t}=. &.  \beta_1(\lambda)y_3 - 2\gamma_2^2 |k_1^c|^2 (\frac{-2y_1y_2y_3}{|k_1^c-k_2^c|^2-\lambda}-\frac{2y_1y_2y_3}{|k_1^c+k_2^c|^2-\lambda}+\frac{y_3^2y_4}{-4|k_1^c|^2+\lambda})\\
  &  -\gamma_3 |k_1^c|^2(3y_3^2y_4+6y_1y_2y_3),\\
    y_{4t}=&. \beta_1(\lambda)y_4 - 2\gamma_2^2 |k_1^c|^2 (\frac{-2y_1y_2y_4}{|k_1^c-k_2^c|^2-\lambda}-\frac{2y_1y_2y_4}{|k_1^c+k_2^c|^2-\lambda}+\frac{y_3y_4^2}{-4|k_1^c|^2+\lambda})\\
   & -\gamma_3 |k_1^c|^2(3y_3y_4^2+6y_1y_2y_4).
\end{align*}
Letting $\lambda = \lambda_0 = |k_1|^2$, $y_2 = \conj{y_1}$, and $y_4 = \conj{y_3}$ gives \begin{align}
    y_{1t}&=  2\gamma_2^2 |k_1^c|^2 (\frac{2y_1|y_3|^2}{|k_1^c-k_2^c|^2-|k_1^c|^2}+\frac{2y_1|y_3|^2}{|k_1^c+k_2^c|^2-|k_1^c|^2}+\frac{y_1|y_1|^2}{3|k_1^c|^2})\nonumber \\&-\gamma_3 |k_1^c|^2(3y_1|y_1|^2+6y_1|y_3|^2),\\
    \conj{y_{1t}}&=  2\gamma_2^2 |k_1^c|^2 (\frac{2\conj{y_1}|y_3|^2}{|k_1^c-k_2^c|^2-|k_1^c|^2}+\frac{2\conj{y_1}|y_3|^2}{|k_1^c+k_2^c|^2-|k_1^c|^2}+\frac{\conj{y_1}|y_1|^2}{3|k_1^c|^2})\nonumber \\&-\gamma_3 |k_1^c|^2(3\conj{y_1}|y_1|^2+6\conj{y_1}|y_3|^2),\\
    y_{3t}&=  2\gamma_2^2 |k_1^c|^2 (\frac{2y_3|y_1|^2}{|k_1^c-k_2^c|^2-|k_1^c|^2}+\frac{2y_3|y_1|^2}{|k_1^c+k_2^c|^2-|k_1^c|^2}+\frac{y_3|y_3|^2}{3|k_1^c|^2})\nonumber \\&-\gamma_3 |k_1^c|^2(3y_3|y_3|^2+6y_3|y_1|^2),\\    
    \conj{y_{3t}}&=  2\gamma_2^2 |k_1^c|^2 (\frac{2\conj{y_3}|y_1|^2}{|k_1^c-k_2^c|^2-|k_1^c|^2}+\frac{2\conj{y_3}|y_1|^2}{|k_1^c+k_2^c|^2-|k_1|^2}+\frac{\conj{y_3}|y_3|^2}{3|k_1^c|^2})\nonumber \\&-\gamma_3 |k_1^c|^2(3\conj{y_3}|y_3|^2+6\conj{y_3}|y_1|^2).
\end{align}\\

Now break up the variables into real and imaginary components, so that $y_1 = a_1 + a_2i$ and $y_3 = a_3+a_4 i $. By combining real and imaginary parts, this gives the leading order approximation of reduced system at the critical threshold $\lambda_0$: \begin{equation}\begin{aligned}
    a_{1t}=a_1 (\xi (a_1^2+a_2^2) + \eta (a_3^2+a_4^2)),\\
    a_{2t}=a_2 (\xi (a_1^2+a_2^2) + \eta (a_3^2+a_4^2)),\\
    a_{3t}=a_3 (\eta (a_1^2+a_2^2) + \xi (a_3^2+a_4^2)),\\
    a_{4t}=a_4 (\eta (a_1^2+a_2^2) + \xi (a_3^2+a_4^2)),
\end{aligned}\end{equation} where 
\begin{equation}\begin{aligned}\xi &= \frac{2\gamma _2^2 -9|k_1^c|^2 \gamma _3}{3},\\\eta &= 2|k_1^c|^2 (\frac{2 \gamma_2^2}{|k_1^c-k_2^c|^2-|k_1^c|^2}+\frac{2\gamma_2^2}{|k_1^c+k_2^c|^2-|k_1^c|^2}-3\gamma_3).\label{4.37}\end{aligned}
\end{equation} 
It can be seen that for $i,j,k,l \in \{1,2,3,4\}$ where $i\neq j\neq k\neq l$, the 48 straight line orbits come from the 24 straight lines given by \begin{align}
    a_i&=a_j=a_k=0,\\
    a_1^2&=a_2^2=a_3^2=a_4^2,\\
    a_i&=a_j=0 \text{ and } a_k^2=a_l^2.
\end{align}\\

Finally, the reduced system \begin{equation}\begin{aligned}
    a_{1t}=\beta a_1 +a_1 (\xi (a_1^2+a_2^2) + \eta (a_3^2+a_4^2)) +o(3),\\
    a_{2t}=\beta a_2 +a_2 (\xi (a_1^2+a_2^2) + \eta (a_3^2+a_4^2)) +o(3),\\
    a_{3t}=\beta a_3 +a_3 (\eta (a_1^2+a_2^2) + \xi (a_3^2+a_4^2))+o(3),\\
    a_{4t}=\beta a_4 +a_4 (\eta (a_1^2+a_2^2) + \xi (a_3^2+a_4^2))+o(3),
\end{aligned}\end{equation} where $\beta = \beta_{10}(\lambda)$ and $o(3)=o(|a|^3)+O(|a|^3*|\beta|)$. By combining equations as well as letting $a_1^2+a_2^2=r_1^2$ and $a_3^2+a_4^2=r_2^2$ gives the following system  
\begin{equation}\begin{aligned}
    r_{1t}=\beta r_1 +r_1 (\xi r_1^2 + \eta r_2^2)+o(3),\\
    r_{2t}=\beta r_2 +r_2 (\eta r_1^2 + \xi r_2^2)+o(3).
\end{aligned}\end{equation}

\subsection{Analysis of the reduced system}
The transition type at the critical point $\lambda_0 = |k_1|^2$ is given by the system 
\begin{equation}\begin{aligned}
    r_{1t}=r_1 (\xi r_1^2 + \eta r_2^2), \\
    r_{2t}=r_2 (\eta r_1^2 + \xi r_2^2),
\end{aligned}
\end{equation} 
where $\xi $ and $\eta$ are defined in (\ref{4.37}). It can be calculated that 
\begin{align*}
    \xi + \eta >0 &\iff \gamma _3 < \frac29\left[\frac{1}{3|k_1^c|^2}+\frac{2}{|k_1^c-k_2^c|^2-|k_1^c|^2}+\frac{2}{|k_1^c+k_2^c|^2-|k_1|^2} \right]\gamma_2 ^2,\\
    \xi > 0 &\iff \gamma_3 < \frac{2}{9|k_1^c|^2}\gamma_2^2,\\
      \eta > 0 &\iff \gamma_3 < \frac{4|k_1^c|^2}{3}\left[\frac{1}{|k_1^c-k_2^c|^2-|k_1^c|^2}+\frac{1}{|k_1^c+k_2^c|^2-|k_1^c|^2}\right]\gamma_2^2.
\end{align*} 

Note that there are no elliptic orbits as the system (\ref{2.12}) is a gradient system (see Lemma A.2.7 in \cite{ptd}). Observe that on the straight line $r_1 = r_2$, the system satisfies $\frac{d r_2}{d r_1} = \frac{r_2}{r_1}$ where $\xi + \eta \neq 0$ and $r_1,r_2 >0$. It can be seen that $r_1 = 0$ and $r_2 = 0$ as well as $r_1 = r_2$ are the straight lines corresponding to the three straight line orbits of this system.\\

Note that on the straight line $r_1 = r_2$, the system reduces to \begin{equation}\begin{aligned}
    r_{1t}&=r_1^3 (\xi  + \eta ), \\
    r_{2t}&=r_2^3 (\eta  + \xi ).
\end{aligned}\end{equation} Thus, when $\xi + \eta >0$, the solutions tend away from the origin, and when $\xi + \eta <0$, the solutions tend towards the origin. In other words, when 
$$\gamma _3 < \frac29\left[\frac{1}{3|k_1^c|^2}+\frac{2}{|k_1^c-k_2^c|^2-|k_1^c|^2}+\frac{2}{|k_1^c+k_2^c|^2-|k_1|^2} \right]\gamma_2 ^2,$$ 
the solutions tend away from the origin, and when 
$$\gamma _3 > \frac29\left[\frac{1}{3|k_1^c|^2}+\frac{2}{|k_1^c-k_2^c|^2-|k_1^c|^2}+\frac{2}{|k_1^c+k_2^c|^2-|k_1|^2} \right]\gamma_2 ^2,$$
 the solutions tend towards the origin.

Note that on the straight line $r_1 = 0$, the system reduces to \begin{equation}
    \begin{aligned}
    r_{1t}&=0, \\
    r_{2t}&=\xi r_2^3. 
\end{aligned}
\end{equation} Thus, when $\xi >0$, the solutions tend away from the origin, and when $\xi <0$, the solutions tend towards the origin. In other words, when $\gamma_3 < \frac{2}{9|k_1^c|^2}\gamma_2^2$, the solutions tend away from the origin, and when $\gamma_3 > \frac{2}{9|k_1^c|^2}\gamma_2^2$, the solutions tend towards the origin.\\

Note that on the straight line $r_2 = 0$, the system reduces to \begin{equation}
    \begin{aligned}
    r_{1t}&=\xi r_1^3 ,\\
    r_{2t}&=0.
\end{aligned}
\end{equation}Thus, when $\xi >0$, the solutions tend away from the origin, and when $\xi <0$, the solutions tend towards the origin. In other words, when $\gamma_3 < \frac{2}{9|k_1^c|^2}\gamma_2^2$, the solutions tend away from the origin, and when $\gamma_3 > \frac{2}{9|k_1^c|^2}\gamma_2^2$, the solutions tend towards the origin.\\

By putting this together, when $\xi + \eta <0$ and $\xi <0$, or equivalently 
$$
\gamma_3 > \frac29 \max \left\{\frac{1}{3|k_1^c|^2}+\frac{2}{|k_1^c-k_2^c|^2-|k_1^c|^2}+\frac{2}{|k_1^c+k_2^c|^2-|k_1|^2}, \frac{1}{|k_1^c|^2}\right\}\gamma_2 ^2,
$$ 
solutions along all three of the straight lines mentioned above approach the origin, so the transition is Type I.\\

When $\xi + \eta >0$ and $\xi >0$, or equivalently $$
\gamma_3 < \frac29 \min \left\{\frac{1}{3|k_1^c|^2}+\frac{2}{|k_1^c-k_2^c|^2-|k_1^c|^2}+\frac{2}{|k_1^c+k_2^c|^2-|k_1|^2}, \frac{1}{|k_1^c|^2}\right\}\gamma_2 ^2,
$$  
solutions along all three of the straight lines mentioned above tend away from the origin, so the transition is Type II.\\

If  $$\frac{2}{9|k_1^c|^2}\gamma_2^2 < \gamma_3 <  \frac29 \left\{\frac{1}{3|k_1^c|^2}+\frac{2}{|k_1^c-k_2^c|^2-|k_1^c|^2}+\frac{2}{|k_1^c+k_2^c|^2-|k_1|^2}\right\}\gamma_2 ^2,$$ 
then $\xi < 0$ and $\xi + \eta >0$. This means that solutions along the line $r_1 = r_2$ tend away from the origin, but solutions along the lines $r_1 = 0$ and $r_2 = 0$ tend towards the origin, so the transition is Type II.\\

If 
$$\frac{2}{9|k_1^c|^2}\gamma_2^2 > \gamma_3 >  \frac29 \left\{\frac{1}{3|k_1^c|^2}+\frac{2}{|k_1^c-k_2^c|^2-|k_1^c|^2}+\frac{2}{|k_1^c+k_2^c|^2-|k_1|^2}\right\}\gamma_2 ^2,$$ 
then $\xi > 0$ and $\xi + \eta <0$. This means that solutions along the line $r_1 = r_2$ tend towards the origin, but solutions along the lines $r_1 = 0$ and $r_2 = 0$ tend away from the origin, so the transition is Type II.\\

Figure \ref{f5.1} shows graphs of the straight line orbits in \(r_1,r_2\) space. Notice that only Figure \ref{f5.1}(a) features solutions that approach the origin, leading to Type I transition. 
\begin{figure}
    \centering
    \includegraphics[width=0.5\linewidth]{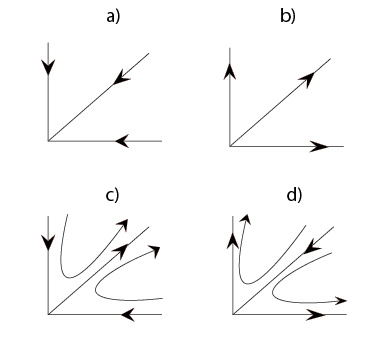}
  \caption{Straight line orbits for: (a) \(\xi+\eta<0\) and \(\xi<0\); (b) \(\xi+\eta>0\) and \(\xi>0\); (c) \(\xi+\eta>0\) and \(\xi<0\); (d) \(\xi+\eta<0\) and $\xi >0$.} \label{f5.1} 
\end{figure}
Theorem~\ref{tm4} is then proved. 

\subsection{Structure of the set of transition states}\label{s5.3}
 To study the detailed structure of the local attractor $\Sigma_\lambda$ in Theorem~\ref{tm4}, representing all transition states for $\lambda > \lambda_0$, we examine the nontrivial fixed points of the reduced system by solving \begin{equation}\begin{aligned}
    \beta r_1 &=-r_1 (\xi r_1^2 + \eta r_2^2), \\
    \beta r_2 &=-r_2 (\eta r_1^2 + \xi r_2^2).
\end{aligned}
\end{equation} 
The solutions are
\begin{equation}
\label{4.83}
\begin{aligned} 
p_1=&(r_1,r_2)=(0,\sqrt{\frac{-\beta}{\xi}}),\\ 
p_2=&(r_1,r_2)=(\sqrt{\frac{-\beta}{\xi}},0),\\
p_3=&(r_1,r_2)=(\sqrt{\frac{-\beta}{\xi + \eta}},\sqrt{\frac{-\beta}{\xi + \eta}}), \quad \text{ if $\xi^2 \neq \eta ^2$.}
\end{aligned} 
\end{equation}
Note that the general solution $u$ in $\Sigma_\lambda$ is given by \begin{align}
    u=y_1e_1+y_2e_2+y_3e_3+y_4e_4+\Phi(y,\lambda),
\end{align} with $\Phi=o(|y|)$. Therefore, the eigenfunctions $\sum_{k=1}^4 y_ke_k$ dictate the typical patterns of solutions, represented by $\Sigma_\lambda$, for $\lambda$ beyond the critical threshold $\lambda_0$. The Jacobian of the system at a fixed point $(r_1,r_2)$ is given by \begin{align}
    J=\begin{pmatrix} \beta +3\xi r_1^2 + \eta r_2^2 & 2\eta r_1r_2 \\ 2\eta r_1r_2 & \beta +\eta r_1^2 + 3\xi r_2^2 \end{pmatrix}.
\end{align} 

Consider the solution $(r_1,r_2)=(0,\sqrt{\frac{-\beta}{\xi}})$. It can be seen that the Jacobian calculated at this solution is $diag(\beta +\eta |\frac{\beta}{\xi}|,\beta +3\xi |\frac{\beta}{\xi}|)$. Thus, if $\beta >0$, then $-|\xi|<\eta$ implies that this solution is a saddle and $-|\xi|>\eta$ implies this solution is stable. \\

Consider the solution $(r_1,r_2)=(\sqrt{\frac{-\beta}{\xi}},0)$. It can be seen that the Jacobian calculated at this solution is $diag(\beta +3\xi |\frac{\beta}{\xi}| ,\beta + \eta |\frac{\beta }{\xi}|)$. Thus, if $\beta >0$, then $-|\xi|<\eta$ implies that this solution is a saddle and $-|\xi|>\eta$ implies this solution is stable.\\

Consider the solution $(r_1,r_2)=(\sqrt{\frac{-\beta}{\xi + \eta}},\sqrt{\frac{-\beta}{\xi + \eta}})$ with the condition that $\xi^2 \neq \eta ^2$. The Jacobian calculated at this solution is \begin{align}
    \begin{pmatrix} \beta + (3\xi + \eta )|\frac{\beta }{\xi + \eta}|  & 2\eta |\frac{ \beta }{\xi + \eta}| \\  2\eta |\frac{ \beta }{\xi + \eta}| &  \beta + (3\xi + \eta )|\frac{\beta }{\xi + \eta}|  \end{pmatrix}.
\end{align} It can be calculated that the eigenvalues of this matrix are $\beta - (\eta - 3 \xi )|\frac{\beta }{\xi + \eta}|$ and $\beta +3\beta (\frac{\eta + \xi}{\beta}*|\frac{\beta }{\xi + \eta}|)$. If $\beta >0$, then $\eta >\xi$ implies this solution is stable and $\eta <\xi$ implies this solution is a saddle. In conclusion, we have the following theorem:

\begin{theorem}[Structure of $\Sigma_\lambda$]\label{tm5.1} 
In the collapsed phase space $(r_1,r_2)$, the bifurcated attractor from case one in Theorem 4.1, $\Sigma_\lambda\approx S^3$, collapses to an arc in the first quadrant, $\Sigma_\lambda^r$, which contains three fixed points: $p_1$, $p_2$, $p_3$, given in (\ref{4.83}). For $j \in \{1,2,3\}$, $p_j$ generates the circles ($p_1$ and $p_2$) or torus ($p_3$) of steady states $(r_1^{(j)} e^{i\theta},r_2^{(j)} e^{i\theta})$, which are all contained in $\Sigma_\lambda$.
\end{theorem}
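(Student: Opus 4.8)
The plan is to use the rotational symmetry of the reduced system to lift the phase portrait of the planar $(r_1,r_2)$ system to the full four-dimensional center manifold, to identify the attractor as an arc fibred by tori, and to recognize the total space as $S^3$.

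First I would record the symmetries. The spatial translation $x\mapsto x+\tau$ is a symmetry of (\ref{2.5}) and acts on the critical amplitudes by $(y_1,y_3)\mapsto(e^{ik_1^c\cd\tau}y_1,e^{ik_2^c\cd\tau}y_3)$; since $k_1^c$ and $k_2^c$ are linearly independent in the multiplicity-four case this realizes the full group $SO(2)\times SO(2)$ of independent rotations of $y_1$ and $y_3$, while the reflection $x\mapsto -x$ acts by $(y_1,y_3)\mapsto(\conj{y_1},\conj{y_3})$. Equivariance under these symmetries forces the reduced vector field, including the $o(3)$ remainder, to take the skew-product form $\dot y_1=y_1\,h_1(|y_1|^2,|y_3|^2,\lambda)$, $\dot y_3=y_3\,h_2(|y_1|^2,|y_3|^2,\lambda)$ with $h_1,h_2$ real and with leading coefficients exactly those of (\ref{4.37}). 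Hence the arguments of $y_1$ and $y_3$ are constant along every orbit; the flow is an exact skew product over the planar flow for $(r_1,r_2)=(|y_1|,|y_3|)$ on the closed first quadrant, with fibre a $2$-torus over interior points and a circle over points of the two coordinate axes. In particular $\pi(y_1,y_3)=(|y_1|,|y_3|)$ is a semiconjugacy, and every fibre consists of steady states of the reduced system whenever its image is a planar equilibrium.

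Next I would assemble the planar attractor. For $\beta=\beta_{10}(\lambda)>0$ small and under the Type-I conditions $\xi<0$, $\xi+\eta<0$ from Theorem~\ref{tm4}, the coordinate axes and the diagonal $r_1=r_2$ are invariant, the origin repels along each, and on each line there is a single further equilibrium, namely $p_2$, $p_1$, $p_3$ from (\ref{4.83}), attracting within that line. The Jacobian computations already carried out show $p_1,p_2,p_3$ are hyperbolic: when $\eta>\xi$, $p_1$ and $p_2$ are saddles and $p_3$ is a sink; when $\eta<\xi$, $p_1,p_2$ are sinks and $p_3$ is a saddle. Because (\ref{2.12}) is a gradient system (Lemma A.2.7 of \cite{ptd}) there are no periodic or homoclinic orbits, and the cubic nonlinearity makes the flow dissipative, so the unstable manifold of the saddle(s) consists of exactly two heteroclinic branches whose closure is an embedded arc $\Sigma_\lambda^r$ joining $p_1$ to $p_2$ through $p_3$. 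This arc is the attractor of the planar system, and by the skew-product structure $\Sigma_\lambda=\pi^{-1}(\Sigma_\lambda^r)$ is the attractor of the four-dimensional reduced system, hence, after center-manifold reduction, of (\ref{2.12}) near $u=0$.

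Finally I would identify $\Sigma_\lambda$ with $S^3$ and read off the steady-state families. Over each interior point of $\Sigma_\lambda^r$, in particular over $p_3$, the fibre is a $2$-torus of steady states $(r_1e^{i\theta_1},r_2e^{i\theta_2})$, while over the endpoints $p_1$ ($r_1=0$) and $p_2$ ($r_2=0$) it degenerates to a circle of steady states; thus $\Sigma_\lambda$ contains the two circles and the torus claimed. Parametrizing the arc by $t\in[0,1]$ with $t=0\leftrightarrow p_1$ and $t=1\leftrightarrow p_2$, the map $(t,\theta_1,\theta_2)\mapsto(\sin\tfrac{\pi t}{2}\,e^{i\theta_1},\cos\tfrac{\pi t}{2}\,e^{i\theta_2})$ identifies $\Sigma_\lambda$ with the unit $S^3\subset\C^2$ via its genus-one Heegaard decomposition $(S^1\times D^2)\cup_{T^2}(D^2\times S^1)$, consistent with Theorem~\ref{tm4}. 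Persistence under the $o(3)$ terms is automatic: the equivariant structure is exact, $p_1,p_2,p_3$ persist by the implicit function theorem, and the circles and torus are normally hyperbolic invariant manifolds. The main obstacle is the planar step, namely verifying that the saddle unstable manifolds limit on the correct sinks so that the connecting orbits form a single embedded arc reaching the axes exactly at $p_1$ and $p_2$, together with checking the continuity of the torus fibration as it degenerates to circles at the endpoints; both follow from the gradient structure and dissipativity but must be spelled out.
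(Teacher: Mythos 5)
Your proposal is correct, and its computational core --- the three planar equilibria (\ref{4.83}), the Jacobian of the $(r_1,r_2)$ system at each of them, and the lift back to circles and a torus of steady states through the phases of $y_1$ and $y_3$ --- is exactly what the paper does in Section~\ref{s5.3}. Where you genuinely depart from the paper is in the justification layer, and your additions are improvements. The paper obtains the polar form of the reduced equations by computing the leading cubic terms and appending ``$+o(3)$''; you instead derive the skew-product form $\dot y_j = y_j\,h_j(|y_1|^2,|y_3|^2,\lambda)$ from equivariance under the translation torus and the reflection $x\mapsto -x$, which is what actually guarantees that the remainder respects the polar structure and that every fibre over a planar equilibrium consists entirely of steady states (you should state explicitly that this requires an equivariantly chosen center manifold, which is standard for a compact group action). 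You also supply two steps the paper leaves implicit: the construction of $\Sigma_\lambda^r$ as the closure of the heteroclinic branches joining the saddle(s) to the sink(s) --- for which your appeal to the gradient structure should be paired with the quadrant estimate $\xi(a^2+b^2)+2\eta ab\le(\xi+\eta)(a^2+b^2)<0$ for $a,b\ge 0$, ruling out escape to infinity --- and the explicit identification of $\pi^{-1}(\Sigma_\lambda^r)$ with $S^3$ via the genus-one Heegaard/join decomposition, which the paper asserts but never argues. One further point in your favor: the fibre over $p_3$ is the two-parameter family $(r_1 e^{i\theta_1}, r_2 e^{i\theta_2})$, as you write; the single-angle expression $(r_1^{(j)}e^{i\theta},r_2^{(j)}e^{i\theta})$ in the theorem statement would give only a circle and is evidently a typo. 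The items you flag as needing to be spelled out (that the unstable branches land on the correct equilibria, and the continuity of the fibration as the tori degenerate to circles at the axes) are exactly the right ones, and both do follow from the gradient structure, dissipativity, and the fact that $p_1,p_2,p_3$ and the origin are the only equilibria in the closed quadrant when $\xi^2\ne\eta^2$.
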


\subsection{Example: square lattice}\label{s5.4}
\-\quad\ Let $l_1=(\frac{2\pi}{50},0)$ and $l_2=(0,\frac{2\pi}{50})$. The scaling factor $50$ is chosen so that the patterns are easier to be visualized. Then, the dual lattice is spanned by the vectors $k_1 = (50,0)$ and $k_2 = (0,50)$. The critical vectors in this case are $k_1^c=k_1$ and $k_2^c=k_2$. Thus $|k_1^c|^2 = |k_2^c|^2 = 50$ and $|k_1^c + k_2^c|^2 = |k_1^c - k_2^c|^2 = 5000$. We also have that $\beta = 50\lambda - 2500$, $\xi = \frac{2}{3}\gamma _2^2 -150 \gamma_3$, and  $\eta = \frac{8}{99} \gamma_2^2-300\gamma_3$. From the theorem in section 6, when $\gamma_3 > \max \{\frac{37}{22275}, \frac{2}{450}\}\gamma_2 ^2 = \frac{2}{450}\gamma_2 ^2$, all straight line orbits tend towards the origin and the transition is Type I. When $\frac{37}{22275}\gamma_2^2<\gamma_3 < \frac{2}{450}\gamma_2 ^2$, solutions along the straight line $r_1=r_2$ tend away from zero, but solutions along $r_1=0$ and $r_2 = 0$ tend towards the origin and the transition is Type II. When $\gamma_3<\frac{37}{22275}\gamma_2^2$, all straight line orbits tend away from the origin and the transition is Type II.\\

The three stationary solutions are
\begin{equation}
    \begin{aligned}
        p_1 &=(r_1,r_2)=\left(0,\sqrt{\frac{7500-150\lambda}{2\gamma _2^2 -450 \gamma _3}}\right),\\ 
        p_2&=(r_1,r_2)=\left(\sqrt{\frac{7500-150\lambda}{2\gamma _2^2 -450 \gamma _3}},0\right), \\
        p_3&=(r_1,r_2)=\left(\sqrt{\frac{247500-4950\lambda}{74\gamma _2^2 -44550 \gamma _3}},\sqrt{\frac{247500-4950\lambda}{74\gamma _2^2 -44550 \gamma _3}}\right).
    \end{aligned}\label{5.40}
\end{equation} 

The trivial solution is unstable when the control parameter exceeds the critical threshold, i.e. when $\lambda >50$. Now let $\lambda >50$. For the first and second solutions, $\frac{26}{27}\gamma_2^2 <\gamma_3 <\frac{22}{9} \gamma_2^2$ implies the solutions are stable, and else are saddles. For the third solution, $\frac{26}{27}\gamma_2^2 <\gamma_3 <\frac{22}{9}\gamma_2^2$ implies the solution is stable, and else is a saddle.\\

{\bf Solution \(p_1\).} Recalling that \(a_1^2+a_2^2=r_1^2\), \(a_3^2+a_4^2=r_2^2\), and further that \(y_1=a_1+a_2i\), \(y_3=a_3+a_4i\), we see that our stationary solutions to the reduced system are radial. From (\ref{new4.2}), we can write the solutions as \(u(x,t)=y_1e_1+\bar{y_1}\bar{e_1}+y_3e_3+\bar{y_3}\bar{e_3}\) where the stable component is of little significance anymore and can be dropped. In this case, \(r_1=0\) so the solution becomes \(u=y_3e_3+\bar{y_3}\bar{e_3}\). Expanding and noting that \(k_2^c=(0,50)\) and \(e_3=e^{ik_2^cx}\), we have 
\begin{equation}
\begin{aligned} 
u(x,t)&=(a_3+ia_4)(\cos(k_2^c \cdot x)\\&+i\sin(k_2^c\cdot x))+(a_3-ia_4)(\cos(k_2^c\cdot x)-i\sin(k_2^c\cdot x))\\&=2(a_3\cos(50x_2)-a_4\sin(50x_2)),
\end{aligned}
\end{equation}
where \(x=(x_1,x_2)\). As \((a_3,a_4)\) run along the circle \(a_3^2+a_4^2=r_2^2\), a set of stationary solutions is generated in \((x,t)\)-space. The principle exchange of stability guarantees that patterns in the form of solutions to (\ref{2.5}) emerge as \(\lambda\) crosses the critical threshold \(\lambda_0=50\). \\

Figure \ref{f5.2} shows a graph of the stationary solution \( p_1\), when \(\lambda=50.1\), \(\gamma_2=1\), and \(\gamma_3=\frac{17}{450}\). In this case, \((r_1,r_2)=(0,1)\) and \(\gamma_3<\frac{26}{27}\gamma_2^2\) so that the solution is a saddle and we have chosen the point \((a_3,a_4)=(\frac{\sqrt{2}}{2},\frac{\sqrt{2}}{2})\) on the circle of solutions. The characteristic patterns for this stationary solution are horizontal rolls. Graphically, we see that the size of the domain is responsible for the amount of rolls within the square, and the patterns shift vertically as the parameters $(a_1,a_2)$ run along the unit circle. 

\medskip

{\bf Solution $p_2$.}  In this case,  the solution can be written as
\begin{equation}
\begin{aligned}
 u(x,t)&=(a_1+ia_2)(\cos(k_1^c \cdot x)\\&+i\sin(k_1^c\cdot x))+(a_1-ia_2)(\cos(k_1^c\cdot x)-i\sin(k_1^c\cdot x))\\ &= 2(a_1\cos(50x_1)-a_2\sin(50x_1)),
 \end{aligned}
\end{equation}
which also produces rolls, however this time horizontal. Figure \ref{f5.3} shows a graph of this solution for the same choice of constants and parameters used in Figure \ref{f5.3}.
\begin{figure}
\centering
    \includegraphics[width=0.5\linewidth]{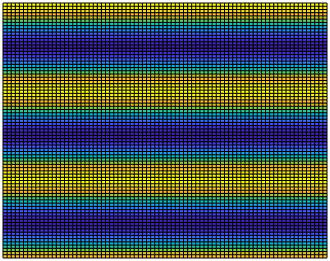}
    \caption{Horizontal rolls exhibited by the stationary solution \\$u(x,t)=\sqrt{2}\cos(50x_2)-\sqrt{2}\sin(50x_2)$.}
    \label{f5.2}
\end{figure}
\begin{figure}
    \centering
    \includegraphics[width=0.5\linewidth]{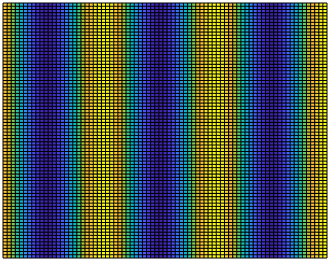}
    \caption{Vertical rolls exhibited by the stationary solution 
    \\ $u(x,t)=\sqrt{2}\cos(50x_1)-\sqrt{2}\sin(50x_1)$.}
    \label{f5.3}
\end{figure}

{\bf Solution $p_3$.} Finally, the solution for $p_3$ can be written as the sum of the previous solutions
\begin{align} u(x,t)=2(a_1\cos(x_1)-a_2\sin(x_1)+a_3\cos(x_2)-a_4\sin(x_2)), \end{align}
where both \((a_1,a_2)\) and \((a_3,a_4)\) run along different circles centered at the origin with radii \(r_1=r_2\).\\

Figure \ref{f5.4} shows the graph of this solution for \(\lambda=50.1\), \(\gamma_2=1\), \(\gamma_3=\frac{569}{445500}\). In this case, \((r_1,r_2)=(1,1)\), and \(\gamma_3>\frac{26}{27}\gamma_2^2\) so that the solution is stable. Here, we have chosen \((a_1,a_2)=(\frac{\sqrt{2}}{2},\frac{\sqrt{2}}{2})\) and \((a_3,a_4)=(\frac{\sqrt{2}}{2}, \frac{\sqrt{2}}{2})\) as the two points on the unit circle. Notice here that the characteristic patterns are circles and that they are arranged in a square-like fashion throughout the lattice. We will encounter another pattern that exhibits circles packed in a different manner, namely hexagonally-packed circles. 
\begin{figure}
    \centering
    \includegraphics[width=0.5\linewidth]{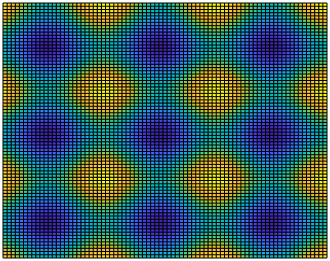}
    \caption{Square-packed circles exhibited by stationary solution $u(x,t)=\sqrt{2}[ \cos(50x_1)-\sin(50x_1)+\cos(50x_2)-\sin(50x_2)]$.}
    \label{f5.4}
\end{figure}

\section{Multiplicity Two Case}

\subsection{Dynamical transition theorem}
Consider the same situation as above but assume that the pairs of $(n_1,n_2)$ that minimize $|n_1k_1+n_2k_2|^2$ are $(n_1^c,n_2^c)$ and $(-n_1^c,-n_2^c)$. Equivalently, the cardinality of $S$ is two. In this case, $\beta_{n_1n_2}(\lambda_0)<0$ if $(n_1,n_2)\in \Z \times \Z \setminus \{(0,0),(n_1^c,n_2^c),(-n_1^c,-n_2^c)\}$. Thus, with critical value $\lambda_0$, the eigenvalue $-|n_1^c k_1 +n_2^c k_2|^2 (|n_1^c k_1 +n_2^c k_2|^2 -\lambda) = -|k_c|^2(|k_c|^2-\lambda)$, has multiplicity two.   We have
\begin{equation}
E_1^\lambda =  \text{span} \{e_1 = e^{i(k_c \cd x)},\quad e_2 = e^{-i(k_c \cd x)}\}, \quad E_2^\lambda=\overline{\text{span}\{e_3(\lambda), ...\}}.\end{equation}

\begin{theorem}[Transition Types for Multiplicity Two]\label{tm2}
    Assume the multiplicity of $\beta_1$ is two at $\lambda=\lambda_0=|k_c|^2$. The following are true:
    
    \begin{enumerate}
    
 \item
  If 
  $$\gamma _3 > \frac{2}{9|k_c|^2}\gamma_2^2,$$ 
  as $\lambda$ crosses $\lambda_0$, the system undergoes a continuous dynamical transition (Type I) to $\Sigma_\lambda\approx S^1$ consisting of a circle of steady-states. 
  
  \item  If  $$\gamma _3 < \frac{2}{9|k_c|^2}\gamma_2^2, $$ 
 as $\lambda$ crosses $\lambda_0$,  the system undergoes a catastrophic  dynamical transition (Type II).
  \end{enumerate}
\end{theorem}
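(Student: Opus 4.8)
The plan is to repeat, in the simpler two-dimensional setting, the center-manifold reduction of Section~5. Let $P_1$ be the projection of $H$ onto the critical space $E_1^\lambda=\text{span}\{e_1,e_2\}$, $e_{1,2}=e^{\pm i k_c\cd x}$, write $u=y_1e_1+y_2e_2+\Phi(y,\lambda)$ with $\Phi(y,\lambda)\in E_2^\lambda$ the center manifold function, and project (\ref{2.12}) onto $e_1,e_2$ exactly as in (\ref{4.6}). Theorem~\ref{tm3.1} already guarantees that a transition occurs at $\lambda_0=|k_c|^2$; the task is only to compute the leading-order reduced vector field and read off its type.

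First I would determine $\Phi$ to leading (quadratic) order from (\ref{4.7})--(\ref{4.10}). The quadratic interactions of $e_1,e_2$ produce the modes $e_1^2=e^{2ik_c\cd x}$, $e_2^2=e^{-2ik_c\cd x}$, and $e_1e_2\equiv 1$; the constant mode is annihilated by the zero-average constraint defining $H$, so only $e^{\pm 2ik_c\cd x}$ appear in $\Phi$ at this order. Formula (\ref{4.10}) then gives $\Phi(y)=\varphi_3 e^{2ik_c\cd x}+\varphi_4 e^{-2ik_c\cd x}+\text{h.o.t.}$ with
\[ \varphi_3=\frac{\gamma_2\,y_1^2}{4|k_c|^2-\lambda},\qquad \varphi_4=\frac{\gamma_2\,y_2^2}{4|k_c|^2-\lambda}, \]
which at $\lambda_0$ specializes to $\varphi_{3,4}=\gamma_2 y_{1,2}^2/(3|k_c|^2)$ since $4|k_c|^2-\lambda_0=3|k_c|^2$.

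Next I would substitute $\Phi$ into the $y_1$-equation and collect the resonant cubic monomials, using the same exponent-counting as after (\ref{4.20}). There is no quadratic contribution (none of $e^{2ik_c\cd x},1,e^{-2ik_c\cd x}$ equals $e_1$); the cross term $2(y_1e_1+y_2e_2)\Phi$ contributes only via $y_2\varphi_3 e_1$; and $(y_1e_1+y_2e_2)^3$ contributes only via $3y_1^2y_2e_1$. Setting $y_2=\conj{y_1}$ and $\lambda=\lambda_0$ collapses the reduced system to the single complex ODE
\[ \dot y_1=\beta_1(\lambda)\,y_1+\xi\,y_1|y_1|^2+o(3),\qquad \xi=\frac{2\gamma_2^2-9|k_c|^2\gamma_3}{3}, \]
i.e. precisely the self-interaction coefficient $\xi$ of (\ref{4.37}), with the $\eta$-type cross term absent because there is no second critical vector. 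Writing $y_1=a_1+ia_2$ and passing to $r^2=a_1^2+a_2^2$ yields $\dot r=\beta_1(\lambda)r+\xi r^3+o(3)$.

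The theorem then follows from the dynamical transition theorem (Theorem~\ref{tA.1}, i.e. \cite[Theorem~2.1.3]{ptd}). If $\gamma_3>\frac{2}{9|k_c|^2}\gamma_2^2$ then $\xi<0$, the bifurcation on the center manifold is supercritical, and the transition is continuous (Type~I); moreover the spatial translations act on the critical mode by $y_1\mapsto e^{i\theta}y_1$, $\theta\in[0,2\pi)$, so the reduced equation carries this circle action and its bifurcated attractor is the whole circle $\{|y_1|=\sqrt{-\beta_1(\lambda)/\xi}\}$ of steady states, giving $\Sigma_\lambda\approx S^1$ consisting of phase-translated roll steady states. If $\gamma_3<\frac{2}{9|k_c|^2}\gamma_2^2$ then $\xi>0$, the bifurcation is subcritical, and the transition is catastrophic (Type~II), with the subcriticality and saddle-node structure exactly as in remark~{\bf 1)} following Theorem~\ref{tm4}. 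The one bookkeeping step that needs care is verifying that no cubic monomial other than $y_1^2y_2$ survives the $e_1$-projection and that the contributions of $\varphi_3$ and of the direct cubic combine into $\xi$ with the signs shown; this is a sub-case of the multiplicity-four computation already carried out in Section~5, so the genuinely new ingredient is the translation-symmetry argument that upgrades ``attractor homological to $S^1$'' to ``circle of steady states.''
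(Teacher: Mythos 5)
Your proposal is correct and follows essentially the same route as the paper's Section 6: the same two-mode center-manifold reduction, the same resonant cubic coefficient $\xi=\tfrac{2}{3}\gamma_2^2-3|k_c|^2\gamma_3$ (the paper's $\eta$), the same passage to $\dot r=\beta r+\xi r^3$, and the same appeal to the transition theorem, with the paper reading off the circle of steady states directly from the radial form of the reduced system rather than from your (equally valid) translation-symmetry argument. The one bookkeeping slip is the sign of the quadratic center-manifold coefficient: the correct value is $\varphi_3=-\gamma_2 y_1^2/(4|k_c|^2-\lambda)$ as written in Section 6 of the paper (your $+$ sign traces back to the paper's own erroneous $\Delta\bar{e}_k=|k|^2\bar{e}_k$ in (\ref{4.13})), but since your final cubic coefficient is the correct one, the conclusion is unaffected.
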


An example of this is when the lattice is spanned by the vectors $l_1=(\frac{\pi}{25},-\frac{\sqrt{3}\pi}{25})$ and $l_2=(0,\frac{2\pi}{25})$, which will be discussed in section 5.2. The solution can thus be written as \begin{align} 
u(x,t)=y_1e_1+y_2e_2+z, \qquad z\in E_2^\lambda. 
\end{align}  
We can show that the center manifold function up to higher order terms is  \begin{align}\phi(x) = \frac{-\gamma_2 y_1^2}{4|k_c|^2-\lambda}e^{2ik_c \cd x}-\frac{\gamma_2 y_2^2}{4|k_c|^2-\lambda}e^{-2ik_c \cd x}.\end{align} 
Then it can be calculated that the reduced equations for this system are \begin{equation}
    \begin{aligned}
    y_{1t}=-|k_c|^2(|k_c|^2 - \lambda)y_1 +\frac{2|k_c|^2\gamma_2^2y_1^2y_2}{4|k_c|^2-\lambda}-3|k_c|^2y_1^2y_2\gamma_3 +o(3),\\
    y_{2t}=-|k_c|^2(|k_c|^2 - \lambda)y_2 +\frac{2|k_c|^2\gamma_2^2y_1y_2^2}{4|k_c|^2-\lambda}-3|k_c|^2y_1y_2^2\gamma_3 +o(3). \label{5.4}  
\end{aligned}
\end{equation}

Let  
\begin{equation}
    \begin{aligned}
   &   y_1 = a_1 + a_2 i, \qquad  y_2 = a_1 - a_2 i, \\
    \eta &= \frac{2}{3}\gamma_2^2 -3|k_c|^2 \gamma_3, 
    \end{aligned}
\end{equation}
then the system given by (\ref{5.4}) can be rewritten as \begin{equation}
    \begin{aligned}
    a_{1t}=\eta a_1 (a_1^2 + a_2^2),\\
    a_{2t}=\eta a_2 (a_1^2 + a_2^2).
\end{aligned} 
\end{equation}
    By analyzing this system, it can be seen that all solutions tend towards the origin when $\eta <0$ and tend away from the origin when $\eta >0$. Thus, the transition is Type I when $\eta <0$ and Type II when $\eta >0$.\\

By using the approximative system \begin{align}
    a_{1t}=\beta a_1 + \eta a_1 (a_1^2 + a_2^2),\\
    a_{2t}=\beta a_2 + \eta a_2 (a_1^2 + a_2^2),
\end{align} and letting $a_1^2 + a_2^2 = r^2$, this system can be rewritten as \begin{align}
    r_t=\beta r + \eta r^3.
\end{align} The nontrivial equilibrium of this system is $r=\sqrt{\frac{-\beta}{\eta}}$. The Jacobian of this system at a fixed point $r$ is \begin{align}
    J=\begin{pmatrix} \beta +3\eta r^2  \end{pmatrix}.
\end{align} When $r=\sqrt{\frac{-\beta}{\eta}}$, the eigenvalue of the Jacobian is $\beta +3\eta |\frac{\beta}{\eta}|$. If $\beta >0$, then $\eta<0$ must be true, which implies that this solution will be stable.
Stationary solutions in this case are given by 
\begin{equation}\begin{aligned}u(x,t)&=y_1e_1+\bar{y_1}\bar{e_1}\\&=(a_1+ia_2)(\cos(k_c\cdot x)+i\sin(k_c\cdot x))\\&+(a_1-ia_2)(\cos(k_c\cdot x)-i\sin(k_c\cdot x))\\&=2(a_1\cos(k_c\cdot x)-a_2\sin(k_c\cdot x)),\end{aligned}\end{equation}
where \((a_1,a_2)\) run along the circle \(a_1^2+a_2^2=r^2\).  Note that solutions depend solely on the two critical vectors of the lattice in which the magnitudes are least, and that the spanning vectors play no direct role besides specifying the domain.

\subsection{Example: roll patterns on parallelogram}
 Let $l_1=(\frac{\pi}{25},-\frac{\sqrt{3}\pi}{25})$ and $l_2=(0,\frac{2\pi}{25})$. Then the dual lattice is spanned by the vectors where $k_1 = (50,0)$ and $k_2 = (25\sqrt{3},25)$. It can be shown that $|k_1|^2 = |k_2|^2 = 2500$, $|k_1 + k_2|^2 = 3125 + 1250\sqrt{3}$, and $|k_1 - k_2|^2 = 5000 - 2500\sqrt{3}$. The critical points of the lattice are \(k_2-k_1\) and \(k_1-k_2\) and so we will use the analysis outlined in the section dealing lwith multiplicity two. In this case, it can be shown that $\beta = -(5000-2500\sqrt{3})(5000-2500\sqrt{3}-\lambda)$ and \(\eta = \frac{2}{3}\gamma_2^2-3(5000-2500\sqrt{3})\gamma_3\). Using the theorem we proved in the previous subsection,  when \(\gamma_3 > \frac{2}{9(5000-2500\sqrt{3})}\gamma_2^2\) , all straight line orbits tend towards the origin and the transition is Type I. When \(\gamma_3 < \frac{2}{9(5000-2500\sqrt{3})}\gamma_2^2\) all straight line orbits tend away from the origin and the transition is Type II.\\

The nontrivial stationary solution is \(r=\sqrt{\frac{(2-\sqrt{3})(2-\sqrt{3}-\lambda)}{\frac{2}{3}\gamma_2^2-3(2-\sqrt{3})\gamma_3}}\). The non-trivial solution is always stable for \(\lambda>\lambda_0\). The solution can be written as
\begin{align}u(x,t)=2(a_1\cos(k_c\cdot x)-a_2\sin(k_c\cdot x)),\end{align} where \(k_c=k_2-k_1=(\frac{\sqrt{3}}{2}-1, \frac{1}{2})\) and \((a_1,a_2)\) run along the circle \(a_1^2+a_2^2=r^2\). 
Figure \ref{f6.1} shows a graph of the solution for \(\lambda=2\), \(\gamma_2=1\), and \(\gamma_3=-\frac{7-6\sqrt{3}}{18-9\sqrt{3}}\), in which case \(r=1\). The parameters \((a_1,a_2)\) are evaluated at the point \((\frac{\sqrt{3}}{2},\frac{1}{2})\). Notice the characteristic patterns are horizontal rolls similar to the square case. 
\begin{figure}
    \centering
    \includegraphics[width=0.5\linewidth]{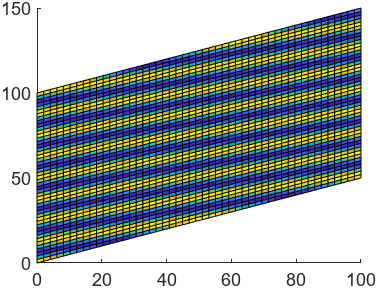}
    \caption{Stationary solution $r=1$.}
    \label{f6.1}
\end{figure}

\section{Multiplicity Six Case}\label{s7}
\subsection{Dynamic transition theorem}
Consider the same situation as above but assume that $\#S = 6$. Then
\begin{align}\label{7.1}
    S=\{(n_1^c,n_2^c),(-n_1^c,-n_2^c),(n_3^c,n_4^c),(-n_3^c,-n_4^c),(n_5^c,n_6^c),(-n_5^c,-n_6^c)\}.
\end{align} 
Thus, we have $\lambda_0 = |k_1^c|^2$, and $\beta_{n_1n_2}(\lambda_0)<0$ for all $(n_1,n_2)\in \Z ^2 \setminus (S\cup \{(0,0)\})$. Thus, with critical value $\lambda_0$, the eigenvalue 
\begin{equation}
\beta_{n_1^cn_2^c}(\lambda) = -|n_1^c k_1 +n_2^c k_2|^2 (|n_1^c k_1 +n_2^c k_2|^2 -\lambda) = -|k_c|^2(|k_c|^2-\lambda),
\end{equation} 
has multiplicity six with 
\begin{equation}
\begin{aligned} 
e_1 &= e^{i(k_1^c \cd x)},&e_2 &= e^{-i(k_1^c \cd x)},&e_3 &= e^{i(k_2^c \cd x)},\\e_4 &= e^{-i(k_2^c \cd x)},&e_5 &= e^{i(k_3^c \cd x)},&e_6 &= e^{-i(k_3^c \cd x)},\\
E_1^\lambda &= \text{span}\{e_1,...,e_6\},& E_2^\lambda &= \overline{ \text{span} \{e_7,e_8,...\} }.
\end{aligned}
\end{equation} 
The solution can thus be written as 
\begin{align}
 u(x,t)=\sum_{i=1}^6y_ie_i+z\in E_1^\lambda \bigoplus E_2^\lambda,
\end{align} 
where $z\in E_2^\lambda$ is the stable component. By similar computation, the center manifold function up to higher order terms is given by $\phi(x) = \sum_{i=7}^{24} \phi_i e_i$. 
Using the notation that 
\begin{equation}
\begin{aligned}
e_7&= e^{2ik_1^c\cd x}, &e_8&= e^{-2ik_1^c\cd x}, &e_9&= e^{2ik_2^c\cd x},\\
e_{10}&= e^{-2ik_2^c\cd x} ,&e_{11}&= e^{2ik_3^c\cd x},&e_{12}&= e^{-2ik_3^c\cd x},\\
e_{13}&= e^{i(k_1^c+k_2^c)\cd x},&e_{14}&= e^{-i(k_1^c+k_2^c)\cd x},&e_{15}&= e^{i(k_1^c+k_3^c)\cd x},\\
e_{16}&= e^{-i(k_1^c+k_3^c)\cd x},&e_{17}&= e^{i(k_2^c+k_3^c)\cd x},&e_{18}&= e^{-i(k_2^c+k_3^c)\cd x},\\
e_{19}&= e^{i(k_1^c-k_2^c)\cd x},&e_{20}&= e^{-i(k_1^c-k_2^c)\cd x},&e_{21}&= e^{i(k_1^c-k_3^c)\cd x},\\
e_{22}&= e^{-i(k_1^c-k_3^c)\cd x},&e_{23}&= e^{i(k_2^c-k_3^c)\cd x},&e_{24}&= e^{-i(k_2^c-k_3^c)\cd x},
\end{aligned}
\end{equation} 
it can be calculated that the coefficients of the manifold are 
\begin{align*}
\phi_7&= \frac{-\gamma_2 y_1^2}{4|k_1^c|^2-\lambda},\quad &\phi_8&= \frac{-\gamma_2 y_2^2}{4|k_1^c|^2-\lambda},\\
\phi_9&= \frac{-\gamma_2 y_3^2}{4|k_1^c|^2-\lambda},\quad &\phi_{10}&= \frac{-\gamma_2 y_4^2}{4|k_1^c|^2-\lambda},\\
\phi_{11}&= \frac{-\gamma_2 y_5^2}{4|k_1^c|^2-\lambda},\quad &\phi_{12}&= \frac{-\gamma_2 y_6^2}{4|k_1^c|^2-\lambda},\\
\phi_{13}&= \frac{-2\gamma_2 y_1y_3}{|k_1^c+k_2^c|^2-\lambda},\quad &\phi_{14}&= \frac{-2\gamma_2 y_2y_4}{|k_1^c+k_2^c|^2-\lambda},\\
\phi_{15}&= \frac{-2\gamma_2 y_1y_5}{|k_1^c+k_3^c|^2-\lambda},\quad &\phi_{16}&= \frac{-2\gamma_2 y_2y_6}{|k_1^c+k_3^c|^2-\lambda},\\
\phi_{17}&= \frac{-2\gamma_2 y_3y_5}{|k_2^c+k_3^c|^2-\lambda},\quad &\phi_{18}&= \frac{-2\gamma_2 y_4y_6}{|k_2^c+k_3^c|^2-\lambda},\\
\phi_{19}&= \frac{-2\gamma_2 y_1y_4}{|k_1^c-k_2^c|^2-\lambda},\quad &\phi_{20}&= \frac{-2\gamma_2 y_2y_3}{|k_1^c-k_2^c|^2-\lambda},\\
\phi_{21}&= \frac{-2\gamma_2 y_1y_6}{|k_1^c-k_3^c|^2-\lambda},\quad &\phi_{22}&= \frac{-2\gamma_2 y_2y_5}{|k_1^c-k_3^c|^2-\lambda},\\
\phi_{23}&= \frac{-2\gamma_2 y_3y_6}{|k_2^c-k_3^c|^2-\lambda},\quad &\phi_{24}&= \frac{-2\gamma_2 y_4y_5}{|k_2^c-k_3^c|^2-\lambda}.
\end{align*}
Let 
\begin{align*}
    (y_1, y_2) = (a_1 + a_2 i,  a_1 - a_2 i),
    (y_3, y_4)  = (a_3 + a_4 i,  a_3 - a_4 i),
    (y_5, y_6) = (a_5 + a_6 i,  a_5 - a_6 i).
\end{align*}

Set \begin{align*}
    & D_{12}^\pm = \frac{1}{|k_1^c\pm k_2^c|^2 - |k_1^c|^2},
   \quad  D_{13}^\pm = \frac{1}{|k_1^c\pm k_3^c|^2 - |k_1^c|^2},
  \quad   D_{23}^\pm = \frac{1}{|k_2^c\pm k_3^c|^2 - |k_1^c|^2},
  \\
 &   \xi = -3|k_1^c|^2 \gamma_3 + \frac{2}{3}\gamma_2^2,\\
  &  \eta = -6|k_1^c|^2 \gamma_3 + |k_1^c|^2(4D_{12}^-+4D_{12}^+)\gamma_2^2,\\
&    \chi = -6|k_1^c|^2 \gamma_3 + |k_1^c|^2(4D_{13}^-+4D_{13}^+)\gamma_2^2,\\
&   \omega = -6|k_1^c|^2 \gamma_3 + |k_1^c|^2(4D_{23}^-+4D_{23}^+)\gamma_2^2.
\end{align*}
 
Then, the reduced system on the center manifold can be rewritten as \begin{equation}
\begin{aligned}
    a_{1t}= \beta a_1 + a_1 (\xi (a_1^2 + a_2^2) +\eta (a_3^2 + a_4^2)+\chi (a_5^2 + a_6^2))+o(3),\\
    a_{2t}= \beta a_2 + a_2 (\xi (a_1^2 + a_2^2) +\eta (a_3^2 + a_4^2)+\chi (a_5^2 + a_6^2))+o(3),\\
    a_{3t}= \beta a_3 +a_3 (\eta (a_1^2 + a_2^2) +\xi (a_3^2 + a_4^2)+\omega (a_5^2 + a_6^2))+o(3),\\
    a_{4t}= \beta a_4 +a_4 (\eta (a_1^2 + a_2^2) +\xi (a_3^2 + a_4^2)+\omega (a_5^2 + a_6^2))+o(3),\\
    a_{5t}= \beta a_5 +a_5 (\chi (a_1^2 + a_2^2) +\omega (a_3^2 + a_4^2)+\xi (a_5^2 + a_6^2))+o(3),\\
    a_{6t}= \beta a_6 +a_6 (\chi (a_1^2 + a_2^2) +\omega (a_3^2 + a_4^2)+\xi (a_5^2 + a_6^2))+o(3).
\end{aligned}
\end{equation}
It can be calculated that 
\begin{align*}
    \xi  >0 &\iff \gamma_3 < \frac{2}{9|k_1^c|^2}\gamma_2^2,\\
    \xi <0 &\iff  \gamma_3 > \frac{2}{9|k_1^c|^2}\gamma_2^2,\\
    \xi  +\omega > 0 &\iff \gamma_3 < (\frac{2}{27|k_1^c|^2} + \frac{4}{9}D_{23}^- + \frac{4}{9}D_{23}^+)\gamma_2^2,\\
    \xi +\omega < 0 &\iff \gamma_3 > (\frac{2}{27|k_1^c|^2} + \frac{4}{9}D_{23}^- + \frac{4}{9}D_{23}^+)\gamma_2^2,\\
    \xi  +\chi > 0 &\iff \gamma_3 < (\frac{2}{27|k_1^c|^2} + \frac{4}{9}D_{13}^- + \frac{4}{9}D_{13}^+)\gamma_2^2,\\
    \xi +\chi < 0 &\iff \gamma_3 > (\frac{2}{27|k_1^c|^2} + \frac{4}{9}D_{13}^- + \frac{4}{9}D_{13}^+)\gamma_2^2,\\
    \xi  +\eta > 0 &\iff \gamma_3 < (\frac{2}{27|k_1^c|^2} + \frac{4}{9}D_{12}^- + \frac{4}{9}D_{12}^+)\gamma_2^2,\\
    \xi +\eta < 0 &\iff \gamma_3 > (\frac{2}{27|k_1^c|^2} + \frac{4}{9}D_{12}^- + \frac{4}{9}D_{12}^-D_{12}^+)\gamma_2^2,
    \end{align*}
    \begin{align*}
    \xi +\eta +\chi > 0 &\iff \gamma_3 < (\frac{2}{45|k_1^c|^2} + \frac{4}{15}D_{12}^- + \frac{4}{15}D_{12}^+  + \frac{4}{15}D_{13}^- + \frac{4}{15}D_{13}^+)\gamma_2^2,\\
    \xi +\eta +\chi < 0 &\iff \gamma_3 > (\frac{2}{45|k_1^c|^2} + \frac{4}{15}D_{12}^- + \frac{4}{15}D_{12}^+ + \frac{4}{15}D_{13}^- + \frac{4}{15}D_{13}^+)\gamma_2^2,\\
    \eta +\xi +\omega > 0 &\iff \gamma_3 < (\frac{2}{45|k_1^c|^2} + \frac{4}{15}D_{12}^- + \frac{4}{15}D_{12}^+  + \frac{4}{15}D_{23}^- + \frac{4}{15}D_{23}^+)\gamma_2^2,\\
    \eta +\xi +\omega < 0 &\iff \gamma_3 > (\frac{2}{45|k_1^c|^2} + \frac{4}{15}D_{12}^- + \frac{4}{15}D_{12}^+ + \frac{4}{15}D_{23}^- + \frac{4}{15}D_{23}^+)\gamma_2^2,\\
    \chi +\omega +\xi > 0 &\iff \gamma_3 < (\frac{2}{45|k_1^c|^2} + \frac{4}{15}D_{13}^- + \frac{4}{15}D_{13}^+ + \frac{4}{15}D_{23}^- + \frac{4}{15}D_{23}^+)\gamma_2^2,\\
    \chi +\omega +\xi < 0 &\iff \gamma_3 > (\frac{2}{45|k_1^c|^2} + \frac{4}{15}D_{13}^- + \frac{4}{15}D_{13}^+ + \frac{4}{15}D_{23}^- + \frac{4}{15}D_{23}^+)\gamma_2^2.
\end{align*}
By letting $r_1^2 = a_1^2 + a_2^2$, $r_2^2 = a_3^2 + a_4^2$, and $r_3^2 = a_5^2 + a_6^2$, the reduced system becomes 
\begin{equation}
\label{7.7}
\begin{aligned}
    r_{1t}= r_1 (\xi r_1^2 +\eta r_2^2+\chi r_3^2),\\
    r_{2t}= r_2 (\eta r_1^2 +\xi r_2^2+\omega r_3^2),\\
    r_{3t}= r_3 (\chi r_1^2 +\omega r_2^2+\xi r_3^2).
\end{aligned} 
\end{equation}
The straight lines corresponding to this system are 
\begin{align*}
  & l_1: \quad  r_1=r_2=0;\\
   & l_2: \quad   r_1=r_3=0;\\
    & l_3: \quad  r_2=r_3=0;\\
  & l_4: \quad    r_1=0 \text{ and } r_2=r_3;\\
  & l_5: \quad    r_2=0 \text{ and } r_1=r_3;\\
   & l_6: \quad   r_3=0 \text{ and } r_1=r_2;\\
  & l_7: \quad    r_1=r_2=r_3.
\end{align*} 
On these straight lines, the systems that emerge are respectively
\begin{align*}
& l_1: \quad 
  r_{3t}=\xi r_3^3;\\
    &  l_2: \quad 
    r_{2t}=\xi r_2^3;\\
    & l_3: \quad 
    r_{1t}=\xi r_1^3;\\
 & l_4: \quad 
    \begin{cases}
    r_{2t}&= r_2^3 (\xi + \omega),\\
    r_{3t}&= r_3^3 (\xi + \omega);\\
    \end{cases}\\
 & l_5: \quad \begin{cases}
    r_{1t}&= r_1^3 (\xi + \chi),\\
    r_{3t}&= r_3^3 (\xi + \chi);\\
    \end{cases}\\
 & l_6: \quad \begin{cases}
    r_{1t}&= r_1^3 (\xi + \eta),\\
    r_{2t}&= r_2^3 (\xi + \eta);\\
    \end{cases}\\
 & l_7: \quad \begin{cases}
    r_{1t}&= r_1^3 (\xi + \eta +\chi),\\
    r_{2t}&= r_2^3 (\xi + \eta +\omega),\\
    r_{3t}&= r_3^3 (\xi + \chi + \omega).
    \end{cases}
\end{align*} 
Thus, the only way for all solutions to go to zero along these straight lines is for all of the coefficients to be negative. Let 
\begin{align}
\label{7.8}A=\max \left\{\frac{\xi}{\gamma_2^2},\frac{\xi + \omega}{\gamma_2^2},\frac{\xi + \chi}{\gamma_2^2},\frac{\xi + \eta}{\gamma_2^2},\frac{\xi + \eta +\chi}{\gamma_2^2},\frac{\xi + \eta +\omega}{\gamma_2^2},\frac{\xi + \chi +\omega}{\gamma_2^2}\right\},
\end{align} 
or equivalently, 
\begin{align}
 \label{7.9}
    A= & \max \{\frac{2}{9|k_1^c|^2}, (\frac{2}{27|k_1^c|^2} + \frac{4}{9}D_{23}^- + \frac{4}{9}D_{23}^+), (\frac{2}{27|k_1^c|^2} + \frac{4}{9}D_{13}^- + \frac{4}{9}D_{13}^+),\\ 
    \nonumber &(\frac{2}{27|k_1^c|^2} + \frac{4}{9}D_{12}^- + \frac{4}{9}D_{12}^+),\\
    \nonumber & (\frac{2}{45|k_1^c|^2} + \frac{4}{15}D_{12}^- + \frac{4}{15}D_{12}^+  + \frac{4}{15}D_{13}^- + \frac{4}{15}D_{13}^+),\\
    \nonumber & (\frac{2}{45|k_1^c|^2} + \frac{4}{15}D_{12}^- + \frac{4}{15}D_{12}^+  + \frac{4}{15}D_{23}^- + \frac{4}{15}D_{23}^+),\\
    \nonumber &(\frac{2}{45|k_1^c|^2} + \frac{4}{15}D_{13}^- + \frac{4}{15}D_{13}^+  + \frac{4}{15}D_{23}^- + \frac{4}{15}D_{23}^+) \}.
\end{align}
From this, it can be seen that the transition is Type I when $\gamma _3 > A \gamma_2^2$ and Type II when $\gamma _3 <  A \gamma_2^2$. This can be stated in the following theorem.

\begin{theorem}[Transition Types for Multiplicity Six Case]\label{tm7.1}
   Suppose $k_1^c$, $k_2^c$, and $k_3^c$ are defined as previously and $k_3^c = a k_1^c + b k_2^c$ such that $a,b \neq 0$ and $(a,b) \in \Z^2 \setminus \{(1,1),(-1,1),(-1,-1),(1,-1)\}$. Then, the following assertions hold true: 
   \begin{itemize}
   
   \item[1).] if 
   $$\gamma _3 >  A \gamma_2^2,$$
   then as $\lambda$ crosses $\lambda_0$, the system undergoes a continuous (Type I) transition to $\Sigma_\lambda$ homological to $S^5$, 
   
   \item[2).] if $$\gamma _3 <  A \gamma_2^2,$$
   then as $\lambda$ crosses $\lambda_0$, 
  then the system  undergoes a jump (Type II) transition.
  \end{itemize}
\end{theorem}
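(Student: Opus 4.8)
The plan is to invoke the dynamical transition theorem of Ma and Wang (Theorem~\ref{tA.1}, i.e. \cite[Theorem 2.1.3]{ptd}), which reduces the classification to the stability of the origin for the leading-order reduced system on the six-dimensional center manifold at $\lambda=\lambda_0$. That reduced system has already been computed above, and after passing to the collapsed radial variables $r_1^2=a_1^2+a_2^2$, $r_2^2=a_3^2+a_4^2$, $r_3^2=a_5^2+a_6^2$ it takes the cubic form (\ref{7.7}). The first point to record is that the non-resonance hypothesis ($a,b\neq 0$ and $(a,b)\notin\{(1,1),(-1,1),(-1,-1),(1,-1)\}$) is precisely what guarantees that each denominator $|k_i^c\pm k_j^c|^2-|k_1^c|^2$ is nonzero, so that $D_{12}^\pm,D_{13}^\pm,D_{23}^\pm$, and hence $\xi,\eta,\chi,\omega$, are finite and the formula (\ref{7.9}) for $A$ is meaningful; this is why the resonant case, where some denominator vanishes, must be handled separately in Section~\ref{s8}.

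Next I would use that (\ref{2.12}) is a gradient system, the Cahn--Hilliard free energy being a strict Lyapunov functional (cf.\ \cite[Lemma A.2.7]{ptd}); consequently the center-manifold reduction is again gradient-like, and the bifurcated set $\Sigma_\lambda$ consists only of steady states and their connecting orbits — there are no periodic or recurrent pieces. Hence the transition is Type~I exactly when the origin is asymptotically stable for (\ref{7.7}) at $\lambda=\lambda_0$, and Type~II otherwise. To decide this, note that the closed octant $\{r_i\ge0\}$ is invariant and that the cubic field in (\ref{7.7}) is, up to positive scalars, $-\nabla\mathcal F$ for a quartic form $\mathcal F(r_1,r_2,r_3)$ with the symmetry pattern dictated by $(\xi,\eta,\chi,\omega)$. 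Its large-time behavior is governed by the seven invariant rays $l_1,\dots,l_7$, along which (\ref{7.7}) collapses to a scalar equation $\dot r=c\,r^3$ with $c$ running through $\xi$, $\xi+\omega$, $\xi+\chi$, $\xi+\eta$, $\xi+\eta+\chi$, $\xi+\eta+\omega$, $\xi+\chi+\omega$. If all seven coefficients are negative then, after checking (via a Lagrange-multiplier computation) that the constrained critical points of $\mathcal F$ on $S^2\cap\{r_i\ge0\}$ are exactly the points where $l_1,\dots,l_7$ meet the sphere, one concludes that $\mathcal F$ is positive on that set, so the origin is asymptotically stable and the transition is Type~I, with $\Sigma_\lambda$ homological to $S^5$ by the attractor bifurcation theorem of \cite{ptd} (here $m=\dim E_1^\lambda=6$). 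If some coefficient is positive, the associated ray is an orbit escaping the origin, the origin is unstable, and the transition is Type~II — with, as in remark~{\bf 1)} after Theorem~\ref{tm4}, a subcritical bifurcated branch and a saddle-node at some $\lambda^\ast<\lambda_0$.

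It then remains to translate the sign conditions into the stated inequality. Using the equivalences listed just before (\ref{7.8}), each condition ``$c<0$'' has the form $\gamma_3>(\text{threshold})\gamma_2^2$, where the thresholds are exactly the seven quantities appearing inside the maximum in (\ref{7.9}); therefore all seven hold simultaneously iff $\gamma_3>A\gamma_2^2$, and at least one fails iff $\gamma_3<A\gamma_2^2$, which yields assertions 1) and 2).

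The step I expect to be the main obstacle is the assertion that negativity of the radial component on the seven invariant rays is \emph{sufficient} for asymptotic stability of the origin: in general a homogeneous cubic vector field may possess attracting directions that are not symmetry rays. The gradient structure is what saves the argument, but one must genuinely verify that the quartic form $\mathcal F$ has no constrained critical points on $S^2\cap\{r_i\ge0\}$ beyond the intersections with $l_1,\dots,l_7$ — equivalently, that the tangential part of the reduced field on the octant has no spurious zeros — which is a finite but somewhat delicate algebraic check that relies on the precise coefficient pattern $(\xi,\eta,\chi,\omega)$ produced by the lattice geometry.
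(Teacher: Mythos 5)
Your proposal takes essentially the same route as the paper: reduction to the cubic radial system (\ref{7.7}), analysis of the invariant rays $l_1,\dots,l_7$, and translation of the sign conditions on the seven coefficients into $\gamma_3 \gtrless A\gamma_2^2$ via (\ref{7.8})--(\ref{7.9}); your observation that the non-resonance hypothesis is exactly what keeps the denominators $|k_i^c\pm k_j^c|^2-|k_1^c|^2$ nonzero, forcing the resonant case into Section~\ref{s8}, also matches the paper. The paper's own argument is in fact no more detailed than yours: it simply asserts that the behavior along these straight lines decides the transition type.

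The obstacle you flag at the end is, however, a genuine gap, and your proposed way of closing it would not go through as stated. You want to verify that the constrained critical points of the quartic potential $\mathcal F$ on $S^2\cap\{r_i\ge 0\}$ lie only on the seven listed rays. But Section~\ref{s7.2} of the paper itself exhibits an equilibrium $p_7$ with all three components nonzero; at $\beta=0$ its direction is an invariant ray of the homogeneous cubic field that is generally \emph{not} among $l_1,\dots,l_7$ (it exists whenever the linear system $\xi s_1+\eta s_2+\chi s_3=\eta s_1+\xi s_2+\omega s_3=\chi s_1+\omega s_2+\xi s_3$ admits a solution with all $s_i=r_i^2>0$). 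So the ``finite algebraic check'' must either include the sign of the radial component along that eighth direction, or show it is automatically negative whenever the seven listed coefficients are; neither is done here, nor in the paper. Relatedly, $l_7$ ($r_1=r_2=r_3$) is not actually invariant unless $\eta=\chi=\omega$, since the three rates $\xi+\eta+\chi$, $\xi+\eta+\omega$, $\xi+\chi+\omega$ generally differ --- a looseness you have inherited from the paper. None of this affects assertion 2), where a single positive coefficient already produces an orbit escaping the origin and hence a jump transition; but the Type~I claim in assertion 1) requires this extra verification before the attractor-bifurcation theorem can be invoked to get $\Sigma_\lambda\approx S^5$.
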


\subsection{Structure of the set of transition states}\label{s7.2}
By using the approximative system 
\begin{equation}\label{7.10}
\begin{aligned}
    r_{1t}= \beta r_1 + r_1 (\xi r_1^2 +\eta r_2^2+\chi r_3^2)+o(3),\\
    r_{2t}= \beta r_2 + r_2 (\eta r_1^2 +\xi r_2^2+\omega r_3^2)+o(3),\\
    r_{3t}= \beta r_3+ r_3 (\chi r_1^2 +\omega r_2^2+\xi r_3^2)+o(3),
\end{aligned} 
\end{equation}
the nontrivial equilibria of this system can be calculated to be \begin{align*}
    p_1:\quad (r_1,r_2,r_3)&=(\sqrt{\frac{-\beta}{\xi}},0,0),\\
    p_2:\quad (r_1,r_2,r_3)&=(0,\sqrt{\frac{-\beta}{\xi}},0),\\
    p_3:\quad (r_1,r_2,r_3)&=(0,0,\sqrt{\frac{-\beta}{\xi}}),\\
    p_4:\quad (r_1,r_2,r_3)&=(0,\sqrt{\frac{-\beta}{\omega + \xi}},\sqrt{\frac{-\beta}{\omega + \xi}}),\\
    p_5:\quad (r_1,r_2,r_3)&=(\sqrt{\frac{-\beta}{\chi + \xi}},0,\sqrt{\frac{-\beta}{\chi + \xi}}),\\
    p_6:\quad (r_1,r_2,r_3)&=(\sqrt{\frac{-\beta}{\eta + \xi}},\sqrt{\frac{-\beta}{\eta + \xi}},0),\\
    p_7:\quad (r_1,r_2,r_3)&=(\sqrt{\frac{\beta (\xi - \omega)(-\eta +\xi -\chi +\omega)}{\eta^2 \xi - 2\eta \chi \omega + \xi (-\xi^2 +\chi^2 +\omega^2)}},\\&\sqrt{\frac{\beta (\xi - \chi)(-\eta +\xi +\chi -\omega)}{\eta^2 \xi - 2\eta \chi \omega + \xi (-\xi^2 +\chi^2 +\omega^2)}},\\&\sqrt{\frac{\beta (\xi - \eta)(\eta +\xi -\chi -\omega)}{\eta^2 \xi - 2\eta \chi \omega + \xi (-\xi^2 +\chi^2 +\omega^2)}}).
\end{align*}
The Jacobian of the reduced system at a fixed point $(r_1,r_2,r_3)$ is 
\begin{align*}
    J=\begin{pmatrix} \beta + 3\xi r_1^2 + \eta r_2^2 + \chi r_3^2 & 2\eta r_1r_2  & 2\chi r_1r_3 \\ 2\eta r_1 r_2  & \beta + \eta r_1^2 + 3\xi r_2^2 + \omega r_3^2 & 2\omega r_2r_3 \\ 2\chi r_1 r_3  & 2\omega r_2r_3 & \beta + \chi r_1^2 + \omega r_2^2 + 3\xi r_3^2   \end{pmatrix} .\end{align*}
    The stability of each solution can be determined from calculating the eigenvalues of this matrix at each equilibria, such as in the example in the next section.

\begin{theorem}[Structure of  $\Sigma_\lambda$]\label{tm7.2}
Under a Type I transition, the bifurcated attractor $\Sigma_\lambda$ from Theorem \ref{tm7.1} is homological to $S^5$, with $p_1$, $p_2$, and $p_3$ correspond to circles of solutions; $p_4$, $p_5$, and $p_6$ correspond to torii of solutions; and $p_7$ corresponding to an $S^1 \times S^1 \times S^1$ surface. 
    
\end{theorem}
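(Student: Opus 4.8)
The plan is to transfer the equilibrium structure of the collapsed three–dimensional system \eqref{7.7} back to the full six–dimensional reduced system on the center manifold, exploiting the three–torus symmetry built into the $(a_1,\dots,a_6)$–equations, and then to combine the attractor bifurcation theorem with the gradient structure of \eqref{2.12} to describe $\Sigma_\lambda$ globally. This is the same strategy used for $\Sigma_\lambda\approx S^3$ in Theorem~\ref{tm5.1}, now with one more dimension of toral symmetry.

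First I would record the symmetry. In the variables $(a_1,\dots,a_6)$ the reduced system, modulo higher order terms, takes the form $\dot a_{2j-1}=a_{2j-1}f_j$, $\dot a_{2j}=a_{2j}f_j$ for $j=1,2,3$, where $r_1^2=a_1^2+a_2^2$, $r_2^2=a_3^2+a_4^2$, $r_3^2=a_5^2+a_6^2$ and $f_1=\beta+\xi r_1^2+\eta r_2^2+\chi r_3^2$, $f_2=\beta+\eta r_1^2+\xi r_2^2+\omega r_3^2$, $f_3=\beta+\chi r_1^2+\omega r_2^2+\xi r_3^2$ are the three bracketed cubics appearing in \eqref{7.10}. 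Since each $f_j$ depends only on $(r_1^2,r_2^2,r_3^2)$, the vector field commutes with the action of $T^3=SO(2)^3$ rotating the three coordinate pairs independently, so the $T^3$–orbit of any equilibrium consists of equilibria. Moreover $(a_1,\dots,a_6)$ is an equilibrium if and only if, for each $j$, either $r_j=0$ or $f_j(r_1^2,r_2^2,r_3^2)=0$; this is exactly the equilibrium condition for \eqref{7.7} in the $(r_1,r_2,r_3)$ variables. Hence the projection $(a_1,\dots,a_6)\mapsto(r_1,r_2,r_3)$ induces a bijection between $T^3$–orbits of equilibria of the six–dimensional system and equilibria of \eqref{7.7} in the closed first octant.

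Next I would read off the topology. The octant equilibria of \eqref{7.7} are the origin together with $p_1,\dots,p_7$ listed just before the statement; for each $p_\ell$ one counts how many of $r_1,r_2,r_3$ are nonzero, and the preimage of $p_\ell$ under the projection $(a_1,\dots,a_6)\mapsto(r_1,r_2,r_3)$ is then a product of that many circles. Thus $p_1,p_2,p_3$ each lift to one circle; $p_4,p_5,p_6$ each lift to a $2$–torus $S^1\times S^1$; and $p_7$ — present exactly when its three radicands are positive — lifts to $S^1\times S^1\times S^1$, which is the correspondence asserted in the theorem. The stability type of each of these invariant tori is read off from the eigenvalues of the Jacobian $J$ displayed above evaluated at the corresponding $p_\ell$ (carried out for concrete lattices in Section~\ref{s7.3}); no orbit is elliptic, because \eqref{2.12}, and hence the reduced flow, is a gradient system.

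Finally I would assemble the global picture. Under $\gamma_3>A\gamma_2^2$, Theorem~\ref{tm7.1} gives a continuous transition, so Theorem~\ref{tA.1} (i.e. \cite[Theorem 2.1.3]{ptd}) produces a bifurcated local attractor $\Sigma_\lambda$ homological to $S^5$. Since \eqref{2.12} is a gradient system (Lemma A.2.7 of \cite{ptd}), so is its center–manifold reduction, whence $\Sigma_\lambda$ contains no recurrence other than equilibria and every trajectory in it converges to one of the equilibrium tori above; thus $\Sigma_\lambda$ is the union of the circles, $2$–tori and $3$–torus generated by $p_1,\dots,p_7$ together with their unstable manifolds. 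Passing to the quotient by $T^3$ — equivalently, to $(r_1,r_2,r_3)$–space — $\Sigma_\lambda$ collapses to a two–dimensional region $\Sigma_\lambda^r$ in the first octant whose three corners are $p_1,p_2,p_3$, whose three bounding arcs carry $p_4,p_5,p_6$, and whose interior contains $p_7$. The step I expect to be the main obstacle is this last one: verifying that $\Sigma_\lambda^r$ really is the full octant face, so that the $T^3$–reconstruction accounts for all of $S^5$ with no equilibrium or connecting orbit missed, which needs the careful stratified bookkeeping along the strata $r_j=0$ where the toral orbits degenerate from $T^3$ to $T^2$ to $S^1$ — the same kind of degeneracy that is handled explicitly in Section~\ref{s8}.
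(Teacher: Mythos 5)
Your proposal is correct and follows essentially the same route as the paper: the paper computes the equilibria $p_1,\dots,p_7$ of the collapsed $(r_1,r_2,r_3)$ system together with the Jacobian there, and implicitly uses the fact that $r_j^2=a_{2j-1}^2+a_{2j}^2$, so that an equilibrium with $k$ nonzero radii lifts under the $T^3$ action to a $k$-torus of steady states — exactly your equivariance argument. Your write-up is in fact more explicit than the paper's, which leaves the lifting, the gradient-system exclusion of recurrence, and the global assembly of $\Sigma_\lambda\approx S^5$ implicit by analogy with Theorem~\ref{tm5.1}.
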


 \subsection{Example: roll patterns}\label{s7.3}
 Let $l_1=(2\pi,\frac{7\sqrt{15}\pi}{15})$ and $l_2=(0,\frac{8\sqrt{15}\pi}{15})$. Then the dual lattice is spanned by $k_1=(1,0)$ and $k_2=(-\frac{7}{8},\frac{\sqrt{15}}{4})$. Note that $|k_1-k_2|^2=\frac{285}{64}$, $|k_1+k_2|^2=\frac{61}{64}$. The critical points of the lattice are thus $k_1$, $-k_1$, $k_2$, $-k_2$, $2k_1+2k_2$, and $-2k_1-2k_2$, so we will use the analysis outlined in the previous sections dealing with multiplicity six with higher coefficient linear dependence. Let $\gamma_2=1$ and $\gamma_3=2$. Observe that $\beta=\lambda-1$, $\xi=-\frac{16}{3}$, $\eta=-\frac{63764}{663}$, $\chi=-\frac{36076}{3657}$, $\omega=-\frac{314956}{38577}$. 
    Let $\lambda=\lambda_0=1$ and consider the straight line orbits of the system. From Theorem \ref{tm7.1}, we see that $max(A)=\xi=-\frac{16}{3}$, and thus the transition is Type I because $\gamma_3>(\text{max} A)\gamma_2^2$. 
    Now let $\lambda=1.1$ so that we may consider the pattern formation that results from the dynamic transition as $\lambda$ crossed the critical threshold. The trivial solution $(r_1,r_2,r_3)=(0,0,0)$
 obviously becomes unstable as $\lambda>\lambda_0=1$. 
    Next consider the solution $(r_1,r_2,r_2)=(\sqrt{\frac{-\beta}{\xi}},0,0)=(\sqrt{\frac{3}{160}},0,0)$. The Jacobian evaluated at this solution is 
    \begin{align}J=\begin{pmatrix}-0.2&0&0\\0&-1.703&0\\0&0&-0.085
    \end{pmatrix},\end{align}
    and so the solution is stable. Observe that because $\beta$, $\xi$, $\eta$, and $\chi$ are all negative, the solutions $(r_1,r_2,r_3)=(0,\sqrt{\frac{-\beta}{\xi}},0)$ and $(r_1,r_2,r_3)=(0,0,\sqrt{\frac{-\beta}{\xi}})$ are also both stable as their Jacobians are diagonal matrices with negative entries.
   The solution $(r_1,r_2,r_2)=(\sqrt{\frac{-\beta}{\xi}},0,0)$ can be written as
   \begin{align}u(x,t)=2(a_1\cos(k_1^c\cd x)-a_2\sin(k_1^c\cd x)),\end{align}
   where $a_1^2+a_2^2=r_1^2$. Graphs of this solution (and the previous two) are similar to those of the multiplicity two case. 
    Now consider the solution $(r_1,r_2,r_3)=(0, \sqrt{\frac{-\beta}{\omega+\xi}},\sqrt{\frac{-\beta}{\omega+\xi}})=(0,\frac{38577}{5207000},\frac{38577}{5207000}) $. The Jacobian evaluated at this solution is
    \begin{align}J=\begin{pmatrix}0.094&0&0\\0&0.0987&-0.000896\\0&-0.000896&0.987
    \end{pmatrix},\end{align}
    with eigenvalues $(\frac{24899}{250000},\frac{24451}{250000},\frac{47}{500})$, from which we see that the solution is unstable in all directions. 
    This solution can be written as
    \begin{align}u(x,t)=2(a_3\cos(k_2^c\cd x)-a_4\sin(k_2^c\cd x)+a_5\cos(k_3^c\cd x)-a_6\sin(k_3^c\cd x)),\end{align}
    where $a_3^2+a_4^2=r_2^2$ and $a_5^2+a_6^2=r_3^2$. Graphs of this solution are similar to those of the multiplicity four case. 
    Consider the solution $p_7$. Substituting values for $\beta$, $\xi$, $\omega$, $\eta$, and $\chi$ produce undefined values for $r_1$ and $r_2$. Subsequently, this solution does not exist for the values of the parameters chosen.

\section{Multiplicity Six with $k_3^c = k_1^c + k_2^c$}
\label{s8}

\subsection{Center manifold reduction}

Consider the same situation as above but assume that $k_3^c = k_1^c + k_2^c$. This scenario is important because different coefficients would have zero in the denominator in the previous computations. If $k_3^c = k_1^c - k_2^c$, then the lattice can be redefined with $k_2^c = -k_2^c$ and the rest of these computations follow. In this case, \begin{align}
    S=\{(n_1^c,n_2^c),(-n_1^c,-n_2^c),(n_3^c,n_4^c),(-n_3^c,-n_4^c),(n_5^c,n_6^c),(-n_5^c,-n_6^c)\}.
\end{align} 
Thus, $\lambda_0 = |k_1^c|^2$,  $\beta_{n_1n_2}(\lambda_0)<0$ for all $(n_1,n_2)\in \Z ^2 \setminus (S\cup \{(0,0)\})$, and  the eigenvalue 
\begin{align*}\beta_{n_1^cn_2^c}(\lambda) = -|n_1^c k_1 +n_2^c k_2|^2 (|n_1^c k_1 +n_2^c k_2|^2 -\lambda) = -|k_c|^2(|k_c|^2-\lambda),\end{align*} 
has multiplicity six with 
\begin{align*} e_1 &= e^{i(k_1^c \cd x)},&e_2 &= e^{-i(k_1^c \cd x)},&e_3 &= e^{i(k_2^c \cd x)},\\e_4 &= e^{-i(k_2^c \cd x)},&e_5 &= e^{i(k_3^c \cd x)},&e_6 &= e^{-i(k_3^c \cd x)},\\
E_1^\lambda &= \text{span}\{e_1,...,e_6\}, & E_2^\lambda &= \overline{ \text{span} \{e_7,e_8,...\} }.
\end{align*}
The solution can thus be written as 
\begin{align} u(x,t)=\sum_{i=1}^6y_ie_i+z, \qquad z\in E_2^\lambda.
\end{align} 
 By similar computation, the center manifold function up to higher order terms is given by $\phi(x) = \sum_{i=7}^{18} \phi_i e_i$, where  \begin{align*}
e_7&= e^{2ik_1^c\cd x},&e_8&= e^{-2ik_1^c\cd x},&e_9&= e^{2ik_2^c\cd x},\\
e_{10}&= e^{-2ik_2^c\cd x},&e_{11}&= e^{2ik_3^c\cd x},&e_{12}&= e^{-2ik_3^c\cd x},\\
e_{13}&= e^{i(k_1^c-k_2^c)\cd x},&e_{14}&= e^{-i(k_1^c-k_2^c)\cd x},&e_{15}&= e^{i(2 k_1^c+k_2^c)\cd x},\\
e_{16}&= e^{-i(2 k_1^c+k_2^c)\cd x},&e_{17}&= e^{i(k_2^c+2 k_2^c)\cd x},&e_{18}&= e^{-i(k_2^c+2 k_2^c)\cd x}.
\end{align*} 
It can be calculated that the coefficients of the manifold are 
\begin{align*}
\phi_7&= \frac{-\gamma_2 y_1^2}{4|k_1^c|^2-\lambda},\quad &\phi_8&= \frac{-\gamma_2 y_2^2}{4|k_1^c|^2-\lambda},\\
\phi_9&= \frac{-\gamma_2 y_3^2}{4|k_1^c|^2-\lambda},\quad &\phi_{10}&= \frac{-\gamma_2 y_4^2}{4|k_1^c|^2-\lambda},\\
\phi_{11}&= \frac{-\gamma_2 y_5^2}{4|k_1^c|^2-\lambda},\quad &\phi_{12}&= \frac{-\gamma_2 y_6^2}{4|k_1^c|^2-\lambda},\\
\phi_{13}&= \frac{-2\gamma_2 y_1y_4}{|k_1^c-k_2^c|^2-\lambda},\quad &\phi_{14}&= \frac{-2\gamma_2 y_2y_3}{|k_1^c-k_2^c|^2-\lambda},\\
\phi_{15}&= \frac{-2\gamma_2 y_1y_5}{|2k_1^c+k_2^c|^2-\lambda},\quad &\phi_{16}&= \frac{-2\gamma_2 y_2y_6}{|2k_1^c+k_2^c|^2-\lambda},\\
\phi_{17}&= \frac{-2\gamma_2 y_3y_5}{|k_1^c+2k_2^c|^2-\lambda},\quad &\phi_{18}&= \frac{-2\gamma_2 y_4y_6}{|k_1^c+2k_2^c|^2-\lambda}.
\end{align*} 
Let
$$
y_1=y_2^\ast = a_1 + a_2 i,
\quad y_3 = y^\ast_4 = a_3 + a_4 i,
\quad y_5= y_6^\ast= a_5 + a_6 i,
$$
and 
\begin{align*}
\xi &= 2|k_1^c|^2 (\frac{2\gamma_2 ^2}{|k_1^c-k_2^c|^2 - |k_1^c|^2}-3\gamma_3),\\
\eta &=-3|k_1^c|^2 \gamma_3 + \frac{2}{3}\gamma_2^2\\
\chi &= 2|k_1^c|^2 (\frac{2\gamma_2 ^2}{|2k_1^c+k_2^c|^2 - |k_1^c|^2}-3\gamma_3),\\
\omega &= 2|k_1^c|^2 (\frac{2\gamma_2 ^2}{|k_1^c+2k_2^c|^2 - |k_1^c|^2}-3\gamma_3),\\
\tau &= 2|k_1^c|^2\gamma_2.
\end{align*} 

Up to the higher-order $o(3)$ terms,  the reduced system is given by 
\begin{equation}\label{8.3}
\begin{aligned}
    a_{1t}= \beta a_1 +a_1 (\eta (a_1^2 + a_2^2) +\xi (a_3^2 + a_4^2)+\chi (a_5^2 + a_6^2))-\tau a_3a_5 -\tau a_4 a_6,\\
    a_{2t}= \beta a_2 +a_2 (\eta (a_1^2 + a_2^2) +\xi (a_3^2 + a_4^2)+\chi (a_5^2 + a_6^2))-\tau a_3a_6 +\tau a_4 a_5,\\
    a_{3t}= \beta a_3 + a_3 (\xi (a_1^2 + a_2^2) +\eta (a_3^2 + a_4^2)+\omega (a_5^2 + a_6^2))-\tau a_1a_5 -\tau a_2 a_6,\\
    a_{4t}= \beta a_4 +a_4 (\xi (a_1^2 + a_2^2) +\eta (a_3^2 + a_4^2)+\omega (a_5^2 + a_6^2))-\tau a_1a_6 +\tau a_2 a_5,\\
    a_{5t}= \beta a_5 +a_5 (\chi (a_1^2 + a_2^2) +\omega (a_3^2 + a_4^2)+\eta (a_5^2 + a_6^2))-\tau a_1a_3 +\tau a_2 a_4,\\
    a_{6t}= \beta a_6 +a_6 (\chi (a_1^2 + a_2^2) +\omega (a_3^2 + a_4^2)+\eta (a_5^2 + a_6^2))-\tau a_2a_3 -\tau a_1 a_4.
\end{aligned}
\end{equation} 
From this point, the transition dynamics can be calculated using the different straight lines in a six dimensional space. In order to make the calculations simpler, we will impose another condition on the original solution of the equation: $u$ must be even in $x$.

\subsection{$u$ even in $x$}

Assume that $u(x,t)=u(-x,t)$. For this to be true, then 
\begin{align*} 
\sum\limits_{(n_1,n_2)\in \Z \times \Z^+ \setminus \{(0,0)\}}(z_{n_1n_2}(t)e^{i(n_1(k_1 \cd x)+n_2(k_2 \cd x))}+\conj{z_{n_1n_2}(t)}e^{-i(n_1(k_1 \cd x)+n_2(k_2 \cd x))}) \\
= \sum\limits_{(n_1,n_2)\in \Z \times \Z^+ \setminus \{(0,0)\}}(z_{n_1n_2}(t)e^{-i(n_1(k_1 \cd x)+n_2(k_2 \cd x))}+\conj{z_{n_1n_2}(t)}e^{i(n_1(k_1 \cd x)+n_2(k_2 \cd x))}). 
\end{align*} 
This condition implies that the expansion coefficients $z(t) = \conj {z(t)}$, so $z(t)\in \R$ for all $t$. 
This also means that 
\begin{align*} u(x,t) &= \sum\limits_{(n_1,n_2)\in \Z \times \Z^+ \setminus \{(0,0)\}}z_{n_1n_2}(t)(e^{i(n_1(k_1 \cd x)+n_2(k_2 \cd x))}+e^{-i(n_1(k_1 \cd x)+n_2(k_2 \cd x))}) \\&= \sum\limits_{(n_1,n_2)\in \Z \times \Z^+ \setminus \{(0,0)\}}(2z_{n_1n_2}(t)\cos(n_1(k_1 \cd x)+n_2(k_2 \cd x))\\&= \sum\limits_{(n_1,n_2)\in \Z \times \Z^+ \setminus \{(0,0)\}}(\tilde{z}_{n_1n_2}(t)\cos(n_1(k_1 \cd x)+n_2(k_2 \cd x)).
\end{align*} 
For the remaining part of this paper, we will be suppressing the tilde.\\

Now assume that $\# S = 3$ (this can occur since $u$ is even so the negative of a mode is the same as the mode itself) and $k_3^c = k_1^c + k_2^c$. In this case, 
\begin{align}
    S=\{(n_1^c,n_2^c),(n_3^c,n_4^c),(n_5^c,n_6^c)\}.
\end{align} 
Thus, we have $\beta_{n_1n_2}(\lambda_0)<0$ for all $(n_1,n_2)\in \Z \times \Z^+ \setminus (S\cup \{(0,0)\})$. Thus, with critical value $\lambda_0$, the eigenvalue 
\begin{align*}
\beta_{n_1^cn_2^c}(\lambda) = -|n_1^c k_1 +n_2^c k_2|^2 (|n_1^c k_1 +n_2^c k_2|^2 -\lambda) = -|k_c|^2(|k_c|^2-\lambda),\end{align*} 
has multiplicity six with 
\begin{equation}
\begin{aligned} 
& e_1 = \cos(k_1^c \cd x),&& e_2 = \cos(k_2^c \cd x),&&e_3 = \cos(k_3^c \cd x),\\
& E_1^\lambda = \text{span}\{e_1,e_2,e_3\},  && E_2^\lambda = \overline{ \text{span} \{e_4,e_5,...\} }.
\end{aligned}
\end{equation}
The solution can thus be written as 
\begin{align} 
u(x,t)=\sum_{i=1}^3y_ie_i+z, \quad z\in E_2^\lambda. \label{7.14}
\end{align} 
By similar computation, the center manifold function up to higher order terms is given by $\phi(x) = \sum_{i=4}^{9} \phi_i e_i$. Using the notation that \begin{equation}\begin{aligned}
e_4&= \cos(2k_1^c \cd x),&e_5&= \cos(2k_2^c \cd x),\\e_6&= \cos(2k_3^c \cd x),&
e_7&= \cos((k_1^c-k_2^c) \cd x),\\e_8&= \cos((2k_1^c+k_2^c) \cd x),&e_9&= \cos((k_1^c+2k_2^c) \cd x),
\end{aligned}\end{equation} it can be calculated that the coefficients of the manifold are \begin{equation}\begin{aligned}
\phi_4&= \frac{-\gamma_2 y_1^2}{8|k_1^c|^2-2\lambda},\quad &\phi_5&= \frac{-\gamma_2 y_2^2}{8|k_1^c|^2-2\lambda},\\
\phi_6&= \frac{-\gamma_2 y_3^2}{8|k_1^c|^2-2\lambda},\quad &\phi_7&= \frac{-\gamma_2 y_1y_2}{|k_1^c - k_2^c|^2-\lambda},\\
\phi_8&= \frac{-\gamma_2 y_1y_3}{|2k_1^c + k_2^c|^2-\lambda},\quad &\phi_9&= \frac{-\gamma_2 y_2y_3}{|k_1^c + 2k_2^c|^2-\lambda}.
\end{aligned}\end{equation} Using this function and by letting \begin{equation}\begin{aligned}\lambda &= \lambda_0 = |k_1^c|^2,\\
\xi &= -\frac{3}{4}|k_1^c|^2 \gamma_3 + \frac{1}{6}\gamma_2^2,\\
\eta &=-\frac{3}{2}|k_1^c|^2 \gamma_3 + \frac{|k_1^c|^2}{|k_1^c - k_2^c|^2 - |k_1^c|^2}\gamma_2^2, \\
\chi &= -\frac{3}{2}|k_1^c|^2 \gamma_3 + \frac{|k_1^c|^2}{|2 k_1^c + k_2^c|^2 - |k_1^c|^2}\gamma_2^2 ,\\
\omega &= -\frac{3}{2}|k_1^c|^2 \gamma_3 + \frac{|k_1^c|^2}{|k_1^c + 2 k_2^c|^2 - |k_1^c|^2}\gamma_2^2 ,\\
\tau &= -|k_1^c|^2\gamma_2,\end{aligned}
\end{equation} 
the reduced system can be rewritten as 
\begin{equation}\label{8.10}
\begin{aligned}
    y_{1t}= \beta y_1 +y_1 (\xi y_1^2 + \eta y_2^2 + \chi y_3^2)+\tau y_2y_3 +o(3),\\
    y_{2t}= \beta y_2 +y_2 (\eta y_1^2 + \xi y_2^2 + \omega y_3^2)+\tau y_1y_3 +o(3),\\
    y_{3t}= \beta y_3 +y_3 (\chi y_1^2 + \omega y_2^2 + \xi y_3^2)+\tau y_1y_2 +o(3).
\end{aligned}
\end{equation} 
By algebraic calculations, it can be shown that if $|k_1^c|^2 = |k_2^c|^2 = |k_3^c|^2$ and $k_3^c = k_1^c + k_2^c$, then $|k_1^c - k_2^c|^2 = |2k_1^c + k_2^c|^2 = |k_1^c + 2 k_2^c|^2$. This implies that 
\begin{equation}
\begin{aligned}
  &  \xi = -\frac{3}{4}|k_1^c|^2 \gamma_3 + \frac{1}{6}\gamma_2^2,\\
 &   \eta = \chi = \omega =-\frac{3}{2}|k_1^c|^2 \gamma_3 + \frac{|k_1^c|^2}{|k_1^c - k_2^c|^2 - |k_1^c|^2}\gamma_2^2,\\
  &  \tau = -|k_1^c|^2\gamma_2,
\end{aligned}
\end{equation} 
and the reduced system can be rewritten as 
\begin{equation}\label{8.12}
\begin{aligned}
    y_{1t}= \beta y_1 + \xi y_1^3 +\eta y_1 y_2^2 + \eta y_1y_3^2+\tau y_2y_3 +o(3),\\
    y_{2t}= \beta y_2 +\xi y_2^3 +\eta y_2 y_3^2 + \eta y_1y_2^2+\tau y_1y_3 +o(3),\\
    y_{3t}= \beta y_3 +\xi y_3^3 +\eta y_1^2 y_3 + \eta y_2^2y_3+\tau y_1y_2 +o(3).
\end{aligned}
\end{equation}

\begin{theorem}[Transition Types with $k_3^c = k_1^c + k_2^c$]\label{tm8.1}
Consider the Cahn-Hilliard system (\ref{2.5}) with evenness assumption and assume that $k_3^c = k_1^c + k_2^c$. Then the following assertions hold true.
\begin{itemize}
\item[1).] If $\gamma_2 = 0$, then as $\lambda$ crosses $\lambda_0$, the system undergoes a continuous (Type I) transition to $\Sigma_\lambda\approx S^2$ if $\gamma_3 >0$, and undergoes a jump (Type II) transition if $\gamma_3 <0$.

\item[2).] If $\gamma_2 > 0$, then as $\lambda$ crosses $\lambda_0$, the system undergoes a  jump (Type II) transition if $\gamma_3 < \frac{2}{9|k_1^c|^2}\gamma_2^2$, and a continuous (Type I) transition to $\Sigma_\lambda\approx S^2$ if $\gamma_3 > \frac{2}{9|k_1^c|^2}\gamma_2^2$.

\item[3).] If $\gamma_2 < 0$,  then as $\lambda$ crosses $\lambda_0$, the system undergoes a jump (Type II) transition.
 \end{itemize}
  
\end{theorem}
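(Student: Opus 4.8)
The plan is to read off the transition type directly from the reduced system (\ref{8.12}), splitting into cases according to the sign of $\gamma_2$ and combining the straight-line-orbit method used in Sections 5--7 with the gradient structure of the flow. Begin with $\gamma_2=0$: then $\tau=0$ and (\ref{8.12}) becomes the purely cubic, fully $S_3$-symmetric system $\dot y_i=\beta y_i+\xi y_i^3+\eta y_i(y_j^2+y_k^2)$, $\{i,j,k\}=\{1,2,3\}$, with $\xi=-\tfrac34|k_1^c|^2\gamma_3$ and $\eta=-\tfrac32|k_1^c|^2\gamma_3$. I would enumerate the invariant lines through the origin --- the three coordinate axes, the partial diagonals $\{y_i=0,\ y_j=\pm y_k\}$, and the full diagonals $\{y_1=\pm y_2=\pm y_3\}$ --- and observe that at $\lambda=\lambda_0$ the induced scalar equation on them is $\dot s=\xi s^3$, $\dot s=(\xi+\eta)s^3$, and $\dot s=(\xi+2\eta)s^3$, respectively. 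Since $\xi$, $\xi+\eta=-\tfrac94|k_1^c|^2\gamma_3$ and $\xi+2\eta=-\tfrac{15}{4}|k_1^c|^2\gamma_3$ are simultaneously negative precisely when $\gamma_3>0$, in that regime every straight-line orbit is attracted to the origin, and by the attractor-bifurcation theorem restated in Theorem~\ref{tA.1}, together with the absence of elliptic orbits for gradient systems (Lemma A.2.7 of \cite{ptd}), the transition is continuous with $\Sigma_\lambda$ homological to $S^2$; if $\gamma_3<0$ then $\xi>0$ and the coordinate axes carry orbits repelled from the origin, so the transition is catastrophic. This settles assertion 1).

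When $\gamma_2\ne0$ the quadratic coupling $\tau y_jy_k$ with $\tau=-|k_1^c|^2\gamma_2\ne0$ enters, and the clean straight-line dichotomy no longer reaches all directions. One part still survives: the three coordinate axes remain invariant (in $\dot y_i$ the quadratic term involves the two vanishing coordinates), and on $\{y_j=y_k=0\}$ the reduced equation at criticality is $\dot s=\xi s^3$ with $\xi=-\tfrac34|k_1^c|^2\gamma_3+\tfrac16\gamma_2^2$. Hence $\xi>0\iff\gamma_3<\tfrac{2}{9|k_1^c|^2}\gamma_2^2$, and in that range the coordinate axes alone force a catastrophic transition; this yields the Type II alternatives of both 2) and 3) at one stroke, leaving the range $\gamma_3>\tfrac{2}{9|k_1^c|^2}\gamma_2^2$, where $\xi<0$ and the sign of $\gamma_2$ must be separated.

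For that range I would exploit that (\ref{8.12}) is the reduction of a gradient flow, $\dot y=-\nabla V$ with
\[
V(y)=-\tfrac\beta2|y|^2-\tfrac\xi4\sum_i y_i^4-\tfrac\eta2\sum_{i<j}y_i^2y_j^2-\tau\,y_1y_2y_3+o(|y|^4),
\]
so that the gradient structure restricts the transition to Type I or Type II, the choice being governed by whether, at $\beta=0$, the origin is a strict local maximum of $-V$ on the critical space --- equivalently, by whether the degree-three term $-\tau y_1y_2y_3$ and the quartic terms combine to make $V-V(0)$ sign-definite near $0$. I would test this on the cones fixed by the residual symmetry group of (\ref{8.12}) (permutations of the $y_i$ together with pairwise sign changes, under which $\tau y_1y_2y_3$ flips sign), tracking the signs of $\xi$, $\xi+\eta$, $\xi+2\eta$ and $\tau$ over those cones. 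The expected outcome is that for $\gamma_2>0$ ($\tau<0$) the bifurcated equilibria --- the hexagon states --- appear within a neighborhood of the origin and attract it, giving a Type I transition to $\Sigma_\lambda\approx S^2$, while for $\gamma_2<0$ ($\tau>0$) some nearby orbit is driven away and the transition is catastrophic; the value $\gamma_3=\tfrac{2}{9|k_1^c|^2}\gamma_2^2$ enters because it is exactly where the coordinate-axis equilibria change stability.

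The step I expect to be the genuine obstacle is this last one, and it is more than the bookkeeping of Sections 5--7: because the quadratic term makes every full diagonal $\{y_1=\pm y_2=\pm y_3\}$ carry a half-orbit that, at criticality, leaves a fixed neighborhood of the origin, one must determine carefully whether, for $\gamma_2>0$ with $\gamma_3$ above the threshold, the $o(3)$ corrections and the global gradient structure of the Cahn-Hilliard flow still confine all transition states to a set $\Sigma_\lambda\approx S^2$ shrinking to $0$ (a continuous transition) or whether they instead produce a far-field attractor (a catastrophic, or mixed, transition), and symmetrically to identify the escaping orbit that forces Type II when $\gamma_2<0$. Everything else --- the $\gamma_2=0$ case and the $\xi>0$ Type II regime --- reduces to routine straight-line bookkeeping of the kind already carried out in Sections 5--7.
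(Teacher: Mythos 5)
Your treatment of assertion 1) and of the Type~II regime $\gamma_3<\tfrac{2}{9|k_1^c|^2}\gamma_2^2$ matches the paper's proof: in both places the argument is the straight-line bookkeeping on the coordinate axes (and, for $\gamma_2=0$, on the partial and full diagonals, where your coefficients $\xi$, $\xi+\eta=3\xi$, $\xi+2\eta=5\xi$ are correct and all change sign together with $\gamma_3$). The genuine gap is exactly where you say it is: you never prove the dichotomy between $\gamma_2>0$ and $\gamma_2<0$ in the range $\xi<0$. Your proposal replaces the proof of that step with ``the expected outcome is\dots,'' deferring the sign-definiteness analysis of the potential $V$ on the symmetry cones to future work. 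A proof cannot end there, since this is the only place in the whole theorem where the sign of $\gamma_2$ (as opposed to $\gamma_2^2$) enters.

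For comparison, the paper settles this step by restricting to the invariant full diagonal $y_1=y_2=y_3$, on which the reduced equation at criticality is $\dot y_1=\tau y_1^2+(\xi+2\eta)y_1^3+o(3)$ with $\tau=-|k_1^c|^2\gamma_2$; truncating at second order it reads off that orbits near the origin are governed by the sign of $\tau$, concluding Type~I for $\tau<0$ and Type~II for $\tau>0$. You should be aware, however, that the concern you raise about this very step is substantive: the scalar equation $\dot s=\tau s^2$ always has one half-line attracted to the origin and the other repelled (and the image of the repelled half under the symmetry $(y_1,y_2,y_3)\mapsto(-y_1,-y_2,y_3)$ is an escaping orbit on the partial diagonal $y_1=y_2=-y_3$), so the paper's one-sentence assertion that ``all solutions along this straight line that start near zero tend towards zero'' when $\tau<0$ does not hold on the negative half of the diagonal. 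You have therefore correctly located the crux of the argument, and even identified a real weakness in it, but you have not supplied the missing argument; as submitted, assertions 2) (the Type~I alternative) and 3) remain unproved in your write-up.
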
 

\begin{proof}
{\sc Case $\gamma_2 = 0$.} Under the assumption, we have  \begin{align}
        \xi = -\frac{3}{4}|k_1^c|^2 \gamma_3,\quad  \eta =-\frac{3}{2}|k_1^c|^2 \gamma_3,  \quad \tau = 0.
    \end{align} 
    The reduced  system then becomes \begin{align}
    y_{1t}= \beta y_1 + \xi y_1 ( y_1^2 + 2 y_2^2 + 2 y_3^2)+o(3),\\
    y_{2t}= \beta y_2 +\xi y_2 (2 y_1^2 + y_2^2 + 2 y_3^2)+o(3),\\
    y_{3t}= \beta y_3 +\xi y_3 (2 y_1^2 + 2 y_2^2 +  y_3^2)+o(3).
\end{align} 
If $\xi >0$, then all solutions will tend away from the origin, and if $\xi <0$ all solutions will tend towards the origin. Equivalently, if $\gamma_3 <0$, the all solutions will tend away from the origin, and if $\gamma_3 >0$ all solutions will tend towards the origin. Therefore, if $\gamma_3<0$, the transition is Type II and if $\gamma_3>0$, the transition is Type I.

\medskip

{\sc Case $\gamma_2 > 0$.} In this case,  $\tau <0$. Then straight lines  to the reduced  system are 
\begin{align*}
 &   y_1=y_2=0,\\
  &  y_1=y_3=0,\\
&    y_2=y_3=0,\\
 &   y_1=0 \text{ and } y_2^2=y_3^2,\\
 &   y_2=0 \text{ and } y_1^2=y_3^2,\\
 &   y_3=0 \text{ and } y_1^2=y_2^2,\\
 &   y_1^2=y_2^2=y_3^2.
\end{align*} 
Let $i,j,k\in \{1,2,3\}$ such that $i\neq j$, $i\neq k$, and $j\neq k$. Along the lines of the form $y_i = y_j = 0$, the system reduces to \begin{align}
    y_{kt} = \xi y_k^3.
\end{align} Observe that since $\xi = -\frac{3}{4}|k_1^c|^2 \gamma_3 + \frac{1}{6}\gamma_2$, \begin{align}
    \xi >0 \iff \gamma_3 < \frac{2}{9|k_1^c|^2}\gamma_2^2.
\end{align}It can be seen that if $\xi <0$, solutions along these lines tend towards the origin and if $\xi >0$, solutions along these lines tend away from the origin. Equivalently, if $\gamma_3 > \frac{2}{9|k_1^c|^2}\gamma_2^2$, solutions along these lines tend towards the origin and if $\gamma_3 < \frac{2}{9|k_1^c|^2}\gamma_2^2$, solutions along these lines tend away from the origin. It can also be seen that along the lines of the form $y_k = 0$ and $y_i^2 = y_j^2$, there are no straight line solutions because at least one of $y_i$ and $y_j$ must be zero.\\

Along the line $y_1=y_2=y_3$ the system reduces to 
$$
    y_{1t}= \beta y_1 +y_1^3(\xi +2 \eta )+\tau y_1^2+o(3), \qquad y_1=y_2=y_3.$$
     By truncating this system to second order (which can be done since we are considering small perturbations near the origin), the system becomes $$
    y_{1t}= \tau y_1^2, \qquad y_1=y_2=y_3. $$
Since $\tau <0$, all solutions along this straight line that start near zero tend towards zero.

\medskip

{\sc Case 3 $\gamma_2 < 0$.} In this case, we have  $\tau >0$. The straight lines corresponding to this system are 
\begin{align*}
  &  y_1=y_2=0,\\
   & y_1=y_3=0,\\
  &  y_2=y_3=0,\\
  &  y_1=0 \text{ and } y_2^2=y_3^2,\\
  &  y_2=0 \text{ and } y_1^2=y_3^2,\\
   & y_3=0 \text{ and } y_1^2=y_2^2,\\
   & y_1^2=y_2^2=y_3^2.
\end{align*} 
Let $i,j,k\in \{1,2,3\}$ such that $i\neq j$, $i\neq k$, and $j\neq k$. Along the lines of the form $y_i = y_j = 0$, the system reduces to \begin{align}
    y_{kt} = \xi y_k^3.
\end{align} Observe that since $\xi = -\frac{3}{4}|k_1^c|^2 \gamma_3 + \frac{1}{6}\gamma_2$, \begin{align}
    \xi >0 \iff \gamma_3 < \frac{2}{9|k_1^c|^2}\gamma_2^2.
\end{align}It can be seen that if $\xi <0$, solutions along these lines tend towards the origin and if $\xi >0$, solutions along these lines tend away from the origin. Equivalently, if $\gamma_3 > \frac{2}{9|k_1^c|^2}\gamma_2^2$, solutions along these lines tend towards the origin and if $\gamma_3 < \frac{2}{9|k_1^c|^2}\gamma_2^2$, solutions along these lines tend away from the origin. It can also be seen that along the lines of the form $y_k = 0$ and $y_i^2 = y_j^2$, there are no straight line solutions because at least one of $y_i$ and $y_j$ must be zero.\\

Along the line $y_1=y_2=y_3$ the system reduces to 
$$
    y_{1t}= \beta y_1 +y_1^3(\xi +2 \eta )+\tau y_1^2 +o(3),\qquad y_1=y_2=y_3.$$
As before, the solution is dictated by the positive sign of $\tau >0$, and  all solutions along this straight line that start near zero tend away from zero.
    
    The proof is then complete.
\end{proof}

\subsection{Structure of the set of transition states}\label{s8.3}
Consider the reduced system (\ref{8.12}).
Assume that $\gamma_2 <0$ so that $\tau >0$ (if $\gamma_2 >0$, the same equilibria and stability will persist). The nontrivial equilibria of this system can be calculated as in Hoyle \cite{hoy} to be 
\begin{itemize}
    \item \text{Rolls: } 
    \begin{itemize}
    \item $(y_1,y_2,y_3)=(\pm \sqrt{\frac{-\beta}{\xi}},0,0)$, \item $(y_1,y_2,y_3)=(0,\pm \sqrt{\frac{-\beta}{\xi}},0)$, 
    \item $(y_1,y_2,y_3)=(0,0,\pm \sqrt{\frac{-\beta}{\xi}})$;
    \end{itemize}
 
 \item \text{Hexagons: } 
 	\begin{itemize} 
	\item $y_1=y_2=y_3=\frac{-\tau \pm \sqrt{\tau ^2 -4\beta \xi - 8\beta \eta}}{2\xi + 4\eta} \text{ if $\tau ^2 -4\beta \xi - 8\beta \eta \geq 0$}$;			\end{itemize}
  
  \item \text{Rectangles: } 
  	\begin{itemize}
	\item $(y_1,y_2,y_3) = (\frac{\tau}{\xi - \eta},\pm\sqrt{\frac{-1}{\xi + \eta}(\beta +\frac{\tau ^2 \xi}{(\xi-\eta)^2})},\pm\sqrt{\frac{-1}{\xi + \eta}(\beta +\frac{\tau ^2 \xi}{(\xi-\eta)^2})})$, 
	\item $(y_1,y_2,y_3) = (\pm\sqrt{\frac{-1}{\xi + \eta}(\beta +\frac{\tau ^2 \xi}{(\xi-\eta)^2})},\frac{\tau}{\xi - \eta},\pm\sqrt{\frac{-1}{\xi + \eta}(\beta +\frac{\tau ^2 \xi}{(\xi-\eta)^2})})$,
    \item $(y_1,y_2,y_3) = (\pm\sqrt{\frac{-1}{\xi + \eta}(\beta +\frac{\tau ^2 \xi}{(\xi-\eta)^2})},\pm\sqrt{\frac{-1}{\xi + \eta}(\beta +\frac{\tau ^2 \xi}{(\xi-\eta)^2})},\frac{\tau}{\xi - \eta})$.
    \end{itemize}
\end{itemize}

Some of these solutions are in far-fields, so we will not consider those. The solutions that are not in far fields are \begin{itemize}
    \item \text{Rolls: } \begin{itemize}\item $\pm p_1 =(\pm \sqrt{\frac{-\beta}{\xi}},0,0)$, \item $\pm p_2 =(0,\pm \sqrt{\frac{-\beta}{\xi}},0)$, \item $\pm p_3 =(0,0,\pm \sqrt{\frac{-\beta}{\xi}})$;\end{itemize}
    \item \text{Hexagons (if $\tau ^2 -4\beta \xi - 8\beta \eta \geq 0$): } \begin{itemize} \item $p_4=(\frac{-\tau + \sqrt{\tau ^2 -4\beta \xi - 8\beta \eta}}{2\xi + 4\eta},\frac{-\tau + \sqrt{\tau ^2 -4\beta \xi - 8\beta \eta}}{2\xi + 4\eta},\frac{-\tau + \sqrt{\tau ^2 -4\beta \xi - 8\beta \eta}}{2\xi + 4\eta})$.\end{itemize}
\end{itemize} The Jacobian of this system at a fixed point $(y_1,y_2,y_3)$ is 
$$
    J=\begin{pmatrix} \beta + 3\xi y_1^2 + \eta y_2^2 + \eta y_3^2 & 2\eta y_1y_2 + \tau y_3 & 2\eta y_1y_3 + \tau y_2 \\ 2\eta y_1 y_2 +\tau y_3 & \beta + \eta y_1^2 + 3\xi y_2^2 + \eta y_3^2 & 2\eta y_2y_3 + \tau y_1\\ 2\eta y_1 y_3 + \tau y_2 & 2\eta y_2y_3 + \tau y_1 & \beta + \eta y_1^2 + \eta y_2^2 + 3\xi y_3^2   \end{pmatrix}.
$$ 
The stability of each solution can be determined from calculating the eigenvalues of this matrix at each equilibrium.\\

\begin{theorem}[Stability of Roll Solutions]\label{tm8.2}
 Consider the roll solutions $\pm p_1$, $\pm p_2$, and $\pm p_3$ and assume $\beta >0$ (or equivalently $\lambda > \lambda_0$).

\begin{itemize}
\item[1)]  If $\eta < \xi - \tau \sqrt{\frac{-\xi}{\beta}}$, then the roll solutions all have three stable eigenvalues. Namely, all rolls are local attractors.

\item[2)] If $\xi - \tau \sqrt{\frac{-\xi}{\beta}}<\eta < \xi + \tau \sqrt{\frac{-\xi}{\beta}}$, then the roll solutions all have two stable eigenvalues and one unstable eigenvalue. Moreover, the unstable directions for $\pm p_1$ are $(0,\pm 1,1)$, for $\pm p_2$ are $(\pm 1,0,1)$, and for $\pm p_3$ are $(\pm 1,1,0)$. Namely all rolls are saddles with two-dimensional stable manifolds and one-dimensional unstable manifolds.

\item[3)] If $\eta > \xi + \tau \sqrt{\frac{-\xi}{\beta}}$, then the roll solutions all have one stable eigenvalue and two unstable eigenvalue. Namely, all  rolls are saddles with one-dimensional stable manifolds and two-dimensional unstable manifolds.

  \end{itemize}
  
\end{theorem}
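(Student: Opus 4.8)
\textit{Strategy.} The system (\ref{8.12}) and the Jacobian $J$ displayed just before the theorem are invariant under simultaneous permutation of the coordinates $(y_1,y_2,y_3)$ together with the corresponding permutation of $\{p_1,p_2,p_3\}$, and under $y\mapsto-y$ (which sends $p_i\mapsto-p_i$). So the plan is to carry out one eigenvalue computation, at the roll $p_1=(\sqrt{-\beta/\xi},0,0)$, and then obtain the five remaining rolls for free. Throughout we use the standing hypotheses of this subsection: $\beta>0$, $\tau=-|k_1^c|^2\gamma_2>0$, and (forced by the existence of real rolls with $\beta>0$) $\xi<0$, so that $-\beta/\xi>0$ and $\sqrt{-\xi/\beta}$ is well defined.

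\textit{Linearization at $p_1$.} First I would substitute $y_1^2=-\beta/\xi$, $y_2=y_3=0$ into $J$. The entries $J_{12},J_{13}$ then involve only $2\eta y_1y_{2,3}+\tau y_{3,2}$, both of which vanish, while $J_{23}=\tau y_1$ survives; hence $J(p_1)$ is block diagonal, with the $(1,1)$ entry equal to $\beta+3\xi y_1^2=-2\beta$ and the block acting on the $(y_2,y_3)$-plane equal to $\begin{pmatrix}a&b\\ b&a\end{pmatrix}$, where $a=\beta+\eta y_1^2=\beta(\xi-\eta)/\xi$ and $b=\tau y_1=\tau\sqrt{-\beta/\xi}$. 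Therefore the three eigenvalues of $J(p_1)$ are $-2\beta$ with eigenvector $(1,0,0)$, and $a\pm b$ with eigenvectors $(0,1,1)$ and $(0,1,-1)$ respectively. Writing $s=\sqrt{-\beta/\xi}>0$ (so $1/s=\sqrt{-\xi/\beta}$ and $a=s^2(\eta-\xi)$), we get $a\pm b=s\bigl(s(\eta-\xi)\pm\tau\bigr)$, and dividing by $s>0$ gives the key equivalences $a+b<0\iff\eta<\xi-\tau\sqrt{-\xi/\beta}$ and $a-b<0\iff\eta<\xi+\tau\sqrt{-\xi/\beta}$.

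\textit{Case analysis and propagation.} Since $\tau>0$, $\xi-\tau\sqrt{-\xi/\beta}<\xi+\tau\sqrt{-\xi/\beta}$, and the three hypotheses of the theorem now read off directly. In case 1, $\eta$ lies below both thresholds, so $a+b<0$ and $a-b<0$; together with $-2\beta<0$, all three eigenvalues are negative and $p_1$ is a stable node, i.e.\ a local attractor. In case 2, $\eta$ is between the thresholds, so $a+b>0>a-b$: one positive eigenvalue with eigenvector $(0,1,1)$ and a two-dimensional stable manifold spanned by $(1,0,0)$ and $(0,1,-1)$. In case 3, $\eta$ is above both thresholds, so $a+b>0$ and $a-b>0$: a one-dimensional stable manifold along $(1,0,0)$ and a two-dimensional unstable manifold. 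Applying the permutations $(1\,2)$ and $(1\,3)$ gives $p_2,p_3$ with identical eigenvalues and carries the unstable eigenvector $(0,1,1)$ to $(1,0,1)$ and $(1,1,0)$; for $-p_i$ only the off-diagonal $\tau$-entry of the $2\times2$ block changes sign, which swaps the two block eigenvectors, so the unstable direction of $-p_1$ in case 2 is $(0,1,-1)\parallel(0,-1,1)$ — altogether the unstable directions for $\pm p_1,\pm p_2,\pm p_3$ are $(0,\pm1,1),(\pm1,0,1),(\pm1,1,0)$ as claimed. The case $\gamma_2>0$ (so $\tau<0$) follows from the involution $y\mapsto-y$, which conjugates the $\tau$-system to the $(-\tau)$-system and exchanges $p_i\leftrightarrow-p_i$ while leaving the set $\{\pm p_i\}$ and its eigenvalues unchanged.

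\textit{Main obstacle.} The one genuinely technical point is that (\ref{8.12}) carries $o(3)=o(|y|^3)+O(|y|^3|\beta|)$ remainder terms, so $p_1,p_2,p_3$ are equilibria only of the leading-order truncation; I would close this by a standard perturbation argument. The quadratic $\tau$-terms vanish identically on each roll axis $\{y_j=y_k=0\}$, so there is no $O(\beta)$ drift, and the implicit function theorem produces genuine equilibria of the full reduced system within $o(\sqrt\beta)$ of each $p_i$, at which the Jacobian differs from $J(p_i)$ by $o(\beta)$ (the derivative of an $o(|y|^3)$ term is $o(|y|^2)$). Because in each of the three regimes the eigenvalues $-2\beta$ and $a\pm b$ are nonzero by the strict inequalities in the hypotheses and are of order $\beta$ or $\sqrt\beta$, their signs — hence the stability type and the dimensions of the invariant manifolds — are unaffected once $\lambda$ is close enough to $\lambda_0$. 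Everything else is the elementary computation above, and this completes the plan.
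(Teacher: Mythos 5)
Your argument is correct and follows essentially the same route as the paper's proof: both exploit the block structure of the Jacobian at a roll, read off the eigenvalues $-2\beta$ and $\beta+\eta\frac{-\beta}{\xi}\pm\tau\sqrt{-\beta/\xi}$ with eigenvectors $(1,0,0)$ and $(0,\pm 1,1)$ (and their permutations), and convert the sign conditions into the stated thresholds on $\eta$. The only differences are refinements rather than a new approach: you use the permutation/reflection symmetry to reduce to a single computation where the paper writes out all three Jacobians explicitly, and you supply a persistence argument for the $o(3)$ remainder that the paper leaves implicit.
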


\begin{proof}
Consider the roll solutions $\pm p_1$. It can be seen that \begin{align}
    J(\pm p_1)=\begin{pmatrix} \beta + 3\xi |\frac{-\beta}{\xi}|  & 0 & 0 \\ 0 & \beta + \eta |\frac{-\beta}{\xi}|  &  \pm \tau \sqrt{\frac{-\beta}{\xi}}\\ 0 &  \pm \tau \sqrt{\frac{-\beta}{\xi}} & \beta + \eta |\frac{-\beta}{\xi}|    \end{pmatrix},\\
    J(\pm p_2)=\begin{pmatrix} \beta + \eta |\frac{-\beta}{\xi}|  & 0 & \pm \tau \sqrt{\frac{-\beta}{\xi}} \\ 0 & \beta + 3\xi |\frac{-\beta}{\xi}|  &  0\\ \pm \tau \sqrt{\frac{-\beta}{\xi}} &  0 & \beta + \eta |\frac{-\beta}{\xi}|    \end{pmatrix},\\
    J(\pm p_3)=\begin{pmatrix} \beta + \eta |\frac{-\beta}{\xi}|  & \pm \tau \sqrt{\frac{-\beta}{\xi}} & 0 \\ \pm \tau \sqrt{\frac{-\beta}{\xi}} & \beta + \eta |\frac{-\beta}{\xi}|  &  0\\ 0 &  0 & \beta + 3\xi |\frac{-\beta}{\xi}|    \end{pmatrix}.
\end{align} 
By direct computation, the eigenvalues and eigenvectors of $J(\pm p_1)$ are \begin{equation}\begin{aligned}\lambda_1 &= \beta + 3\xi |\frac{-\beta}{\xi}|,&& v_1 = (1,0,0),\\
\lambda_2 &= \beta + \eta |\frac{-\beta}{\xi}| + \tau \sqrt{\frac{-\beta}{\xi}},&& v_2 = (0,\pm 1,1),\\ 
\lambda_3 &= \beta + \eta |\frac{-\beta}{\xi}| - \tau \sqrt{\frac{-\beta}{\xi}},&& v_3 = (0,\mp 1,1).
\end{aligned}
\end{equation}

The eigenvalues and eigenvectors of $J(\pm p_2)$ are \begin{equation}\begin{aligned}\lambda_1 &= \beta + 3\xi |\frac{-\beta}{\xi}|,&& v_1 = (1,0,0),\\
\lambda_2 &= \beta + \eta |\frac{-\beta}{\xi}| + \tau \sqrt{\frac{-\beta}{\xi}},&& v_2 = (\pm 1,0,1),\\ 
\lambda_3 &= \beta + \eta |\frac{-\beta}{\xi}| - \tau \sqrt{\frac{-\beta}{\xi}},&& v_3 = (\mp 1,0,1).\end{aligned}
\end{equation}
The eigenvalues and eigenvectors of $J(\pm p_3)$ are 
\begin{equation}\begin{aligned}\lambda_1 &= \beta + 3\xi |\frac{-\beta}{\xi}|,&& v_1 = (1,0,0),\\
\lambda_2 &= \beta + \eta |\frac{-\beta}{\xi}| + \tau \sqrt{\frac{-\beta}{\xi}},&&v_2 = (\pm 1,1,0),\\ 
\lambda_3 &= \beta + \eta |\frac{-\beta}{\xi}| - \tau \sqrt{\frac{-\beta}{\xi}},&&v_3 = (\mp 1,1,0).\end{aligned}\end{equation}
Now assume that $\beta >0$, so $\xi <0$ must be true. In this case, the eigenvalues reduce to \begin{equation}\begin{aligned}\lambda_1 &= -2 \beta ,\\
\lambda_2 &= \beta + \eta \frac{-\beta}{\xi} + \tau \sqrt{\frac{-\beta}{\xi}},\\ 
\lambda_3 &= \beta + \eta \frac{-\beta}{\xi} - \tau \sqrt{\frac{-\beta}{\xi}}.\end{aligned}\end{equation} From this, the following statements emerge: \begin{equation}\begin{aligned}\lambda_1 &<0 ,\\
\lambda_2 &>0 \iff \eta > \xi - \tau \sqrt{\frac{-\xi}{\beta}},\\ 
\lambda_3 &>0 \iff \eta > \xi + \tau \sqrt{\frac{-\xi}{\beta}}.\end{aligned}\end{equation} Since $\xi <0$, if $\eta < \xi - \tau \sqrt{\frac{-\xi}{\beta}}$, these solutions will have three stable eigenvalues, if $\xi - \tau \sqrt{\frac{-\xi}{\beta}}<\eta < \xi + \tau \sqrt{\frac{-\xi}{\beta}}$, the solution will have two stable eigenvalues and one unstable eigenvalue, and if $\eta > \xi + \tau \sqrt{\frac{-\xi}{\beta}}$, these solutions will have two unstable eigenvalues and one stable eigenvalue.\end{proof}

Consider the hexagon solution $p_4$. It can be seen that \begin{align}
    J(p_4)=\begin{pmatrix} \beta + 3\xi y_1^2 + 2\eta y_1^2  & 2\eta y_1^2 + \tau y_1 & 2\eta y_1^2 + \tau y_1 \\ 2\eta y_1^2 +\tau y_1 & \beta + 2\eta y_1^2 + 3\xi y_1^2  & 2\eta y_1^2 + \tau y_1\\ 2\eta y_1^2y_1^2 + \tau y_1 & 2\eta y_1^2 + \tau y_1 & \beta + 2\eta y_1^2 + 3\xi y_1^2   \end{pmatrix}.
\end{align} This solution will be further explored in the example below.

\subsection{Example: roll and hexagonal patterns}\label{s8.4}
\-\quad\ Let $l_1=(\frac{\pi}{25},-\frac{\sqrt{3}\pi}{75})$ and $l_2=(0,-\frac{2\sqrt{3}\pi}{75})$. The dual lattice is spanned by the vectors $k_1 = (50,0)$ and $k_2 = (-25,-25\sqrt{3})$. Note that $k_1 + k_2=(25,-25\sqrt{3})$ and $|k_1|^2 = |k_2|^2 = |k_1+k_2|^2=2500$. The critical points of the lattice are thus $k_1$, $-k_1$, $k_2$, $-k_2$, $k_1+k_2$, and $-k_1-k_2$, so we will use the analysis outlined in the previous sections dealing with multiplicity six where $k_1^c = k_1$, $k_2^c = k_2$, and $k_3^c = k_1 + k_2$. Observe that \(\beta=2500\lambda-6250000\), \(\xi=-1875\gamma_3+\frac{1}{6}\gamma_2^2\), \(\eta=-3750 \gamma_3+\frac{1}{2}\gamma_2^2\), and $\tau = -2500\gamma_2$. \\

Let $\lambda=\lambda_0=1$ and consider the straight line orbits of the system. Assume $\gamma_2=0$. From Theorem \ref{tm8.1}, we see that the transition is Type I if $\gamma_3>0$ and Type II if $\gamma_3<0$. For $\gamma_2>0$, the transition is Type I if $\gamma_3>\frac{2}{9}\gamma_2^2$ and Type II if $\gamma_3<\frac{2}{9}\gamma_2^2$. For $\gamma_2<0$, the transition is always Type II and solutions along these orbits will tend away from the origin.\\

Now let $\lambda=2501$ so that we may consider the pattern formation that results from the dynamic transition at $\lambda=\lambda_0=2500$. Consider the trivial solution $y_1=y_2=y_3=0$. This solution loses its stability as the control parameter exceeds the critical threshold, i.e. when $\lambda>\lambda_0=2500$. Now consider the rolls solutions 
\begin{align} 
\label{8.30}
y_i=\pm \sqrt{\frac{37500000-15000\lambda}{-11250\gamma_3+\gamma_2^2}} \text{, $y_j=0$, $y_k=0$, $i \neq j \neq k$} ,
\end{align} 
for $i \in \{1,2,3\}$. 
Theorem \ref{tm8.2} determine the stability of these six solutions in terms of $\gamma_2$ and $\gamma_3$. Let $\gamma_2=1$, $\gamma_3=2$, and $\lambda=2501$. Then $\xi=-\frac{22499}{6}$, $\eta=-\frac{14999}{2}$, $\tau=-2500$, and $\beta=1$. From this, we can calculate that $\xi+\tau \sqrt{\frac{-\xi}{\beta}}=-156839$ and $\xi-\tau\sqrt{\frac{-\xi}{\beta}}=149340$. It then follows that 
\begin{align}\xi+\tau \sqrt{\frac{-\xi}{\beta}}<\eta<\xi-\tau \sqrt{\frac{-\xi}{\beta}},\end{align}
and assertion 2) of the theorem applies. We see that the rolls each have two stable eigenvalues and one unstable eigenvalue. The unstable directions for $p_1$ are $(0,\pm 1, 1)$, for $p_2$ are $(\pm 1, 0, 1)$, and for $p_3$ are $(\pm 1, 1, 0)$. 
From (\ref{7.14}), by ignoring higher order terms, we can write the rolls solutions as
\begin{align}u(x,t)=\sum_{i=1}^{3}y_ie_i,\end{align} 
where the coefficients $y_i$ are found by means of equation (\ref{8.30}); in this case our nontrivial coefficient is $\pm0.817$ and the others are zero . Thus, our six solutions are
\begin{equation}\begin{aligned}u_{1,2}(x,t)&=\pm0.817\cos(50x_1),\\u_{3,4}(x,t)&=\pm0.817\cos(-25x_1-25\sqrt{3}x_2),\\u_{5,6}(x,t)&=\pm0.817\cos(25x_1-25\sqrt{3}x_2).\end{aligned}\end{equation}

Figure \ref{f8.1} shows a graph of the solution $u_1$. Notice that the rolls are vertical, a result of the $x_1$ term inside the cosine. In contrast, Figure \ref{f8.2} shows a graph of the solution $u_3$ where the rolls are oriented at a different angle. 
\begin{figure}
    \centering
    \includegraphics[width=0.5\linewidth]{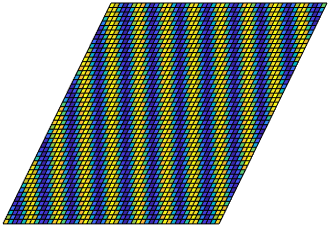}
    \caption{Rolls exhibited by the stationary solution $u(x,t)=0.817\cos(50x_1)$}
    \label{f8.1}
\end{figure}
\begin{figure}
    \centering
    \includegraphics[width=0.5\linewidth]{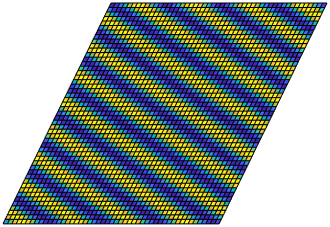}
    \caption{Rolls exhibited by the stationary solution $u(x,t)=0.817\cos(-25x_1-25\sqrt{3}x_2)$}
    \label{f8.2}
\end{figure}

Consider the hexagonal solution $p_4$ given by
\begin{align} y_1=y_2=y_3=\frac{-\tau+\sqrt{\tau^2-4\beta \xi-8\beta \eta}}{2\xi+4\eta}. \end{align}
Using our previous values of $\gamma_2$, $\gamma_3$, and $\lambda$, the solution becomes
\begin{align}y_1=y_2=y_3=-0.134.\end{align}
We look now at the Jacobian (15.37) evaluated at this solution. Plugging in coefficients, we get
\begin{align}J(p_4)=\begin{pmatrix} -470&65.68&65.68\\65.68&-470&65.68\\65.68&65.68&-470 \end{pmatrix} .\end{align}

The eigenvalues of this matrix are $(-\frac{13392}{25},-\frac{13392}{25}, -\frac{-8466}{25})$, all of which are stable. Thus, the stationary solution is stable and can be written as
\begin{align}u_7(x,t)=
-0.134[\cos(50x_1)+\cos(-25x_1-25\sqrt{3}x_2)+\cos(25x_1-25\sqrt{3}x_2)].\end{align}
Figure \ref{f8.3} shows a graph of this solution. Notice that the circles are hexagonally-packed, in contrast with the square-packed circles of Figure \ref{f5.4}. In fact, hexagonally-packed circles (HPC) are not normally observed under the regular Cahn-Hilliard model on rectangular domains. It is the unique feature of our lattice structure and the high multiplicity of the critical eigenvalue that allows for this pattern to emerge. \\

Note that for the long-range interaction model in the next section, the 
 critical vector $k^c=n_1^ck_1+n_2^ck_2$ is related to the long-range interaction term in the manner 
given by (\ref{9.5}), so that as $\sigma$ gets larger, $|k^c|^2$ increases leading to richer and more complex patterns such as the hexagonal patterns; see also \cite{LSWZ11}.

\begin{figure}
    \centering
    \includegraphics[width=0.5\linewidth]{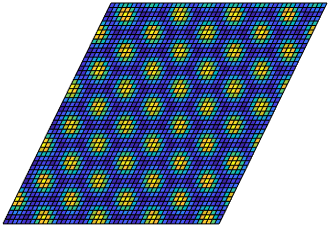}
    \caption{Hexagonally-packed circles exhibited by the stationary solution
    $u(x,t)=-0.213(\cos(50x_1)+\cos(-25x_1-25\sqrt{3}x_2)+\cos(25x_1-25\sqrt{3}x_2))$}
    \label{f8.3}
\end{figure}

\section{Long-Range Interaction}
Assume the same lattice structure as mentioned in Section \ref{s2}, but now consider any solution to the boundary value problem; see among others \cite{LSWZ11}:
 \begin{equation}\begin{aligned}
 &   u_t=-\Delta^2 u -\lambda \Delta u -\sigma u +\Delta (\gamma_2 u^2 + \gamma_3 u^3), (x,t)\in \R^2 \times \R^+,\\
 &   u(x+k,t)=u(x,t), k\in L^\ast,\\
   & u(x,0)=\phi(x),\\
 &   \int_{U}u(x,t)dx=0.
\end{aligned}\label{9.1}
\end{equation} 
In this case, the linear operator $L$ is given by $Lu=-\Delta^2u-\lambda \Delta u -\sigma u$, the eigenvalues of $L$ are 
\begin{align}
\label{9.2}\beta_{n_1n_2}(\lambda)&=-|n_1k_1+n_2k_2|^4+\lambda |n_1k_1+n_2k_2|^2 -\sigma\\&= -|n_1k_1+n_2k_2|^2 (|n_1k_1+n_2k_2|^2-\lambda )-\sigma.\nonumber
\end{align} 
To find the first mode that goes unstable, observe that if $\beta_{n_1n_2}(\lambda) = 0$, then 
$$\lambda = |n_1k_1+n_2k_2|^2 + \frac{\sigma}{|n_1k_1+n_2k_2|^2}.$$ Thus, the critical value $\lambda_0$ is given by 
\begin{align}
    \lambda_0 = \min_{k\in L^\ast \setminus \{(0,0)\}}\left(|n_1k_1+n_2k_2|^2 + \frac{\sigma}{|n_1k_1+n_2k_2|^2}\right).
\end{align} 
Let $S$  be defined by 
\begin{equation}\label{9.4}
S=\left\{(n_1,n_2)\subset \Z^2\setminus \{(0,0)\quad  \big|  \quad |n_1k_1+n_2k_2|^2 + \frac{\sigma}{|n_1k_1+n_2k_2|^2} = \lambda_0\right\}.
\end{equation}
 It can be seen that depending on parameter values, the possible values of the cardinality of $S$ are any even natural number. The critical vector $k^c=n_1^ck_1+n_2^ck_2$ is related to the long-range interaction term in the manner 
 \begin{equation}\label{9.5}
 |k^c|^2 \sim \sqrt{\sigma}.
 \end{equation} 
Consequently, as $\sigma$ gets larger, $|k^c|^2$ increases leading to richer and more complex patterns.\\

Assume going forward that $\# S=2$. Then
\begin{align}
    S=\{(n_1^c,n_2^c),(-n_1^c,-n_2^c)\}.
\end{align} 
Thus, we have $\beta_{n_1n_2}(\lambda_0)<0$ for all $(n_1,n_2)\in \Z ^2 \setminus (S\cup \{(0,0)\})$. Thus, with critical value $\lambda_0$, the eigenvalue 
\begin{align}
\beta_{n_1^cn_2^c}(\lambda) &= -|n_1^c k_1 +n_2^c k_2|^2 (|n_1^c k_1 +n_2^c k_2|^2 -\lambda)-\sigma \\&= -|k_c|^2(|k_c|^2-\lambda)-\sigma,\nonumber 
\end{align} 
has multiplicity two with 
\begin{equation}
\begin{aligned} & e_1 = e^{i(k_1^c \cd x)},  && e_2 = e^{-i(k_1^c \cd x)}\\
& E_1^\lambda =\text{span}\{e_1,e_2\}, && 
E_2^\lambda = \overline{ \text{span}\{e_3,e_4,...\} }.
\end{aligned}
\end{equation} 
The solution can thus be written as 
\begin{align} u(x,t)=y_1e_1+y_2e_2+z,
\quad z\in E_2^\lambda.
\end{align}
 By similar computation as in the previous sections, the center manifold function up to higher order terms is given by \begin{align}\phi(x) = \frac{4\gamma_2 |k_1^c|^2 y_1^2}{-4|k_1^c|^2 (4|k_1^c|^2-\lambda)-\sigma}e^{2ik_c \cd x}+\frac{4\gamma_2 |k_1^c|^2 y_2^2}{-4|k_1^c|^2 (4|k_1^c|^2-\lambda)-\sigma}e^{-2ik_c \cd x}.\end{align} Using this function, we can derive  the reduced equations, up to suppressing high-order terms, as follows: 
 \begin{equation}\begin{aligned}
    y_{1t}=\beta y_1 +\frac{8|k_1^c|^2\gamma_2^2y_1^2y_2}{-4|k_1^c|^2(4|k_1^c|^2-\lambda)-\sigma}-3|k_1^c|^2y_1^2y_2\gamma_3,\\
    y_{2t}=\beta y_2 +\frac{8|k_1^c|^2\gamma_2^2y_1y_2^2}{-4|k_1^c|^2(4|k_1^c|^2-\lambda)-\sigma}-3|k_1^c|^2y_1y_2^2\gamma_3.    
\end{aligned}
\end{equation} 
Let 
\begin{equation}
\begin{aligned}
& \lambda_0 = |k_1^c|^2 + \frac{\sigma}{|k_1^c|^2}, 
\qquad \eta = \frac{-8|k_1^c|^4}{3(-4|k_1^c|^4+\sigma)}\gamma_2^2 -3|k_1^c|^2 \gamma_3,
\end{aligned}
\end{equation} 
Let  $ y_1= a_1 + a_2 i, y_2 = a_1 - a_2 i$, then 
 the reduced system can be rewritten as 
 \begin{equation}
 \label{9.13}
 \begin{aligned}
    a_{1t}=\beta a_1 +\eta a_1 (a_1^2 + a_2^2) +o(3),\\
    a_{2t}=\beta a_2 +\eta a_2 (a_1^2 + a_2^2)+o(3).
\end{aligned}\end{equation}

 \begin{theorem}[Transition Types with Long-Range Interaction]\label{tm9.1}
    Assume the multiplicity of $\beta_1$ is two at $\lambda=|k_1^c|^2 + \frac{\sigma}{|k_1^c|^2}$. The following are true:

 \begin{itemize}
   
\item[1).]  If 
$$\gamma _3 > \frac{8|k_1^c|^2}{9(4|k_1^c|^4 - \sigma)}\gamma_2^2,$$ then the  system undergoes a continuous dynamical transition (Type I) to $\Sigma_\lambda\approx S^1$ consisting of a circle of steady-states as $\lambda$ crosses $\lambda_0$. 
\item[2).]  If  
$$\gamma _3 < \frac{8|k_1^c|^2}{9(4|k_1^c|^4 - \sigma)}\gamma_2^2,$$ then the  system undergoes a jump dynamical transition (Type II) as $\lambda$ crosses $\lambda_0$.
\end{itemize}
\end{theorem}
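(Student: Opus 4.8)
The plan is to reduce Theorem~\ref{tm9.1} to the two-dimensional system (\ref{9.13}) already obtained on the center manifold, and then to repeat, almost verbatim, the argument used for the multiplicity-two case (Theorem~\ref{tm2}). First I would observe that the right-hand side of (\ref{9.13}) is equivariant under the rotation group $SO(2)$ acting on $(a_1,a_2)$: with $r^2 = a_1^2 + a_2^2$ the radial variable obeys $r_t = \beta r + \eta r^3$ while the angular variable is neutral. Consequently every nontrivial equilibrium lies on an entire circle of equilibria, and the determination of the transition type collapses to the scalar equation $r_t = \beta r + \eta r^3$ evaluated at $\lambda = \lambda_0$, where $\beta = \beta_{n_1^c n_2^c}(\lambda_0) = 0$ by the principle of exchange of stabilities, so that $r_t = \eta r^3$.

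Second, I would identify the sign of $\eta$ in terms of the physical parameters. From the definition $\eta = \tfrac{-8|k_1^c|^4}{3(-4|k_1^c|^4+\sigma)}\gamma_2^2 - 3|k_1^c|^2\gamma_3$, a direct rearrangement gives
\begin{equation*}
\eta < 0 \iff \gamma_3 > \frac{8|k_1^c|^2}{9\,(4|k_1^c|^4-\sigma)}\,\gamma_2^2,
\end{equation*}
provided $4|k_1^c|^4 - \sigma > 0$. This positivity is automatic rather than an added hypothesis: evaluating the stable eigenvalue of the mode $e^{2ik_1^c\cdot x}$ at $\lambda_0 = |k_1^c|^2 + \sigma/|k_1^c|^2$ gives $\beta_{2n_1^c 2n_2^c}(\lambda_0) = 3(\sigma - 4|k_1^c|^4)$, and since $\#S = 2$ forces every non-critical mode (in particular $2k_1^c$) to be linearly stable at $\lambda_0$, we get $\sigma - 4|k_1^c|^4 < 0$. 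The same quantity appears (up to a nonzero factor) in the denominator of the center-manifold function $\phi(x)$, so (\ref{9.13}) is indeed the correct leading-order reduction.

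Third, I would conclude the dichotomy. If $\eta < 0$, then $r_t = \eta r^3$ sends all nearby solutions to $r = 0$ at $\lambda = \lambda_0$; since (\ref{9.1}) is a gradient system (Lemma A.2.7 in \cite{ptd}) no other local recurrence can occur, so by \cite[Theorem 2.1.3]{ptd} (Theorem~\ref{tA.1}) the transition is continuous, and for $\lambda$ slightly above $\lambda_0$ the bifurcated attractor is the circle of equilibria $\{r = \sqrt{-\beta/\eta}\} \approx S^1$, whose members are $2(a_1\cos(k_1^c\cdot x) - a_2\sin(k_1^c\cdot x))$ with $a_1^2 + a_2^2 = -\beta/\eta$. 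If $\eta > 0$, the origin is repelling within the center manifold at $\lambda_0$ and the transition is catastrophic (Type II). Feeding the equivalence of the second step into these two possibilities yields exactly the statement of the theorem.

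The only genuinely delicate point is the bookkeeping induced by the long-range term $-\sigma u$: one must track it carefully through the eigenvalue formula (\ref{9.2}), the formula for $\lambda_0$, and the center-manifold coefficients so that $\eta$ — and above all the sign of $4|k_1^c|^4 - \sigma$ — emerges as claimed. Once this is done, the analysis is a line-by-line repetition of the multiplicity-two argument, with no new dynamical phenomena.
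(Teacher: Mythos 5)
Your proposal is correct and follows essentially the same route as the paper: reduce to the rotationally symmetric system (\ref{9.13}), note that the transition type is decided by the sign of the cubic coefficient $\eta$ on the radial equation at $\lambda=\lambda_0$ (where $\beta=0$), and translate $\eta<0$ into the stated inequality on $\gamma_3$. Your added observations --- that $4|k_1^c|^4-\sigma>0$ follows from the linear stability of the mode $e^{2ik_1^c\cdot x}$ at $\lambda_0$, and the explicit identification of the bifurcated circle $r=\sqrt{-\beta/\eta}$ --- are consistent with what the paper records in the surrounding text.
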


\begin{proof}
    By analyzing the system \begin{equation}\begin{aligned}
    a_{1t}=\eta a_1 (a_1^2 + a_2^2),\\
    a_{2t}=\eta a_2 (a_1^2 + a_2^2),
\end{aligned}\end{equation} it can be seen that all solutions tend towards the origin when $\eta <0$ and tend away from the origin when $\eta >0$. Thus, the transition is Type I when $\eta <0$ and Type II when $\eta >0$.
\end{proof}

By using the approximative system \begin{equation}\begin{aligned}
    a_{1t}=\beta a_1 + \eta a_1 (a_1^2 + a_2^2) +o(3),\\
    a_{2t}=\beta a_2 + \eta a_2 (a_1^2 + a_2^2) +o(3),
\end{aligned}\end{equation} and letting $a_1^2 + a_2^2 = r^2$, this system can be rewritten as \begin{align}
    r_t=\beta r + \eta r^3.
\end{align} The nontrivial equilibrium of this system is $r=\sqrt{\frac{-\beta}{\eta}}$. The Jacobian of this system at a fixed point $r$ is \begin{align}
    J=\begin{pmatrix} \beta +3\eta r^2  \end{pmatrix}.
\end{align} 
When $r=\sqrt{\frac{-\beta}{\eta}}$, the eigenvalue of the Jacobian is $\beta +3\eta |\frac{\beta}{\eta}|$. If $\beta >0$, then $\eta<0$ must be true, which implies that this solution will be stable.
Stationary solutions in this case are given by 
\begin{align*}
u(x,t)&=y_1e_1+\bar{y_1}\bar{e_1}\\&=(a_1+ia_2)(\cos(k_c\cdot x)+i\sin(k_c\cdot x))\\&+(a_1-ia_2)(\cos(k_c\cdot x)-i\sin(k_c\cdot x))\\&=2(a_1\cos(k_c\cdot x)-a_2\sin(k_c\cdot x)).
\end{align*}
where \((a_1,a_2)\) run along the circle \(a_1^2+a_2^2=r^2\).  Note that solutions depend solely on the two critical vectors of the lattice in which the magnitude is least, and that the spanning vectors play no direct role besides specifying the domain.

\appendix
\section{Dynamic Transition Theory}
In this appendix we recall some basic elements of the dynamic transition theory developed by Ma and Wang \cite{ptd}, which are used to carry out the dynamic transition analysis for the binary systems in this article. As mentioned in the introduction,  for many problems in sciences, we need to understand  the transitions  from one state to another, and the stability/robustness of the new states. For this purpose, a dynamic transition theory is developed Ma and Wang \cite{ptd}, and is applied to both equilibrium and non-equilibrium phase transitions in nonlinear sciences. The basic philosophy of the dynamic transition theory is to search for  the complete set of transition  states, which are represented by a  local attractor, rather than some steady states or periodic solutions or other type of orbits as part of this local attractor.  \\

Let $X$  and $ X_1$ be two Banach spaces,   and $X_1\subset X$ a compact and
dense inclusion. In this chapter, we always consider the following
nonlinear evolution equations
\begin{equation}
\left. 
\begin{aligned} 
&\frac{du}{dt}=L_{\lambda}u+G(u,\lambda),\\
&u(0)=\varphi ,
\end{aligned}
\right.\label{5.1}
\end{equation}
where $u:[0,\infty )\rightarrow X$ is unknown function,  and 
$\lambda\in \R^1$  is the system parameter.\\

Assume that $L_{\lambda}:X_1\rightarrow X$ is a parameterized
linear completely continuous field depending continuously on
$\lambda\in \R^1$, which satisfies
\begin{equation}
\left. 
\begin{aligned} 
&L_{\lambda}=-A+B_{\lambda}   && \text{a sectorial operator},\\
&A:X_1\rightarrow X   && \text{a linear homeomorphism},\\
&B_{\lambda}:X_1\rightarrow X&&  \text{a linear compact  operator}.
\end{aligned}
\right.\label{5.2}
\end{equation}
In this case, we can define the fractional order spaces
$X_{\sigma}$ for $\sigma\in \R^1$. Then we also assume that
$G(\cdot ,\lambda ):X_{\alpha}\rightarrow X$ is $C^r(r\geq 1)$
bounded mapping for some $0\leq\alpha <1$, depending continuously
on $\lambda\in \R^1$, and
\begin{equation}
G(u,\lambda )=o(\|u\|_{X_{\alpha}}),\ \ \ \ \forall\lambda\in
\R^1.\label{5.3}
\end{equation}\\

In addition, let $G(u,\lambda )$ have the Taylor
expansion at $u=0$ as follows
\begin{equation}
G(u,\lambda )=\sum^r_{m=k}G_m(u,\lambda )+o(\|u\|^r_{X_1})\qquad \text{ for  some }
2\leq k\leq r, \label{1.109}
\end{equation}
where $u\in X_1$, $G_m:X_1\times\cdots\times X_1\rightarrow X$ is an
$m$-multiple linear operator, and $G_m(u,\lambda
)=G_m(u,\cdots,u,\lambda )$.

Hereafter we always assume the conditions (\ref{5.2}) and
(\ref{5.3}), which represent that the system (\ref{5.1}) has
a dissipative structure.

Let the eigenvalues (counting multiplicity) of $L_{\lambda}$ be given by
$$\{\beta_j(\lambda )\in \C\ \   |\ \ j=1,2,\cdots\}$$
Assume that
\begin{align}
&  \text{Re}\ \beta_i(\lambda )
\left\{ 
 \begin{aligned} 
 &  <0 &&    \text{ if } \lambda  <\lambda_0,\\
& =0 &&      \text{ if } \lambda =\lambda_0,\\
& >0&&     \text{ if } \lambda >\lambda_0,
\end{aligned}
\right.   &&  \forall 1\leq i\leq m,  \label{a5.4}\\
&\text{Re}\ \beta_j(\lambda_0)<0 &&  \forall j\geq
m+1.\label{a5.5}
\end{align}\\

The following theorem is a basic principle of transitions; see \cite[Theorem 2.1.3]{ptd} for details. Basically this theorem  is the introduction of a dynamic classification scheme of dynamic transitions, with which  phase transitions, both equilibrium and non-equilibrium, are classified into three types: Type-I, Type-II and Type-III Mathematically, these transitions  are also respectively  c.           

\bt\cite[Theorem 2.1.3]{ptd}\la{tA.1}
 Let the conditions (\ref{a5.4}) and
(\ref{a5.5}) hold true. Then, the system (\ref{5.1}) must have a
transition from $(u,\lambda )=(0,\lambda_0)$, and there is a
neighborhood $U\subset X$ of $u=0$ such that the transition is one
of the following three types:

\begin{itemize}
\item[(1)] {\sc Continuous Transition}: 
there exists an open and dense set
$\widetilde{U}_{\lambda}\subset U$ such that for any
$\varphi\in\widetilde{U}_{\lambda}$,  the solution
$u_{\lambda}(t,\varphi )$ of (\ref{5.1}) satisfies
$$\lim\limits_{\lambda\rightarrow\lambda_0}\limsup_{t\rightarrow\infty}\|u_{\lambda}(t,\varphi
)\|_X=0.$$ In particular, the attractor bifurcation of (\ref{5.1})
at $(0,\lambda_0)$ is a continuous transition.

\item[(2)] {\sc Jump Transition}: 
for any $\lambda_0<\lambda <\lambda_0+\varepsilon$ with some $\varepsilon >0$, there is an open
and dense set $U_{\lambda}\subset U$ such that 
for any $\varphi\in U_{\lambda}$, 
$$\limsup_{t\rightarrow\infty}\|u_{\lambda}(t,\varphi
)\|_X\geq\delta >0,$$ 
where $\delta >0$ is independent of $\lambda$. 
This type of transition  is also called the discontinuous 
transition. 

\item[(3)] {\sc Mixed Transition}: 
for any $\lambda_0<\lambda <\lambda_0+\varepsilon$  with some $\varepsilon >0$, 
$U$ can be decomposed into two open sets
$U^{\lambda}_1$ and $U^{\lambda}_2$  ($U^{\lambda}_i$ not necessarily
connected):
$$\bar{U}=\bar{U}^{\lambda}_1+\bar{U}^{\lambda}_2,\ \ \
\ U^{\lambda}_1\cap U^{\lambda}_2=\emptyset ,$$ 
such that
\begin{align*}
&\lim\limits_{\lambda\rightarrow\lambda_0}\limsup_{t\rightarrow\infty}\|u(t,\varphi
)\|_X=0   &&   \forall\varphi\in U^{\lambda}_1,\\
& \limsup_{t\rightarrow\infty}\|u(t,\varphi
)\|_X\geq\delta >0 && \forall\varphi\in U^{\lambda}_2.
\end{align*}
\end{itemize}
\et

\section{Center Manifold reductions}
The stable and center manifold theorems are classical \cite{henry, b-book, ptd}. The main objective of this appendix is to provide a few important approximation formulas in Theorem~\ref{tB.1},  proved in \cite{b-book, ptd}. These formulas are used in the analysis of this paper.  \\

The spaces $X_1$ and $X$ can be decomposed into
the direct sum
$$
X_1=E_1^\lambda \bigoplus E_2^\lambda, \qquad X=E_1^\lambda\bigoplus\bar{E}_2^\lambda,
$$ 
where 
 \begin{align*} 
 & E_1^\lambda=\text{span}\ \{e_1(\lambda ),\cdots, e_m(\lambda )\},\\
& E_2^\lambda=\{u\in X_1|\ \  (u,e^*_i(\lambda ))=0 \quad  \forall 1\leq
i\leq m\},\\
& \bar{E}_2^\lambda=\text{closure\ of}\ E_2^\lambda \ \text{in}\ X.
\end{align*}
The linear operator $L_{\lambda}$ can be decomposed into
\begin{equation}
 L_{\lambda}=J_{\lambda}\bigoplus{\mathcal{L}}_{\lambda},
\label{1.112}
\end{equation}
where $J_{\lambda}: E_1^\lambda\rightarrow E_1^\lambda$ is the Jordan matrix of $L_{\lambda}$ at
$\beta_i(\lambda )$ $(1\leq i\leq m)$, and ${\mathcal{L}}_{\lambda}=L_{\lambda}|_{E_2^\lambda}:E_2^\lambda\rightarrow\bar{E}_2^\lambda$
possesses eigenvalues $\beta_j(\lambda )$   $(j\geq m+1)$. In this case
the equation (\ref{5.1}) can be written as
\begin{align}
& \frac{dx}{dt}=J_{\lambda}x+P_1G(x+y,\lambda ),\label{1.113}\\
&\frac{dy}{dt}={\mathcal{L}}_{\lambda}y+P_2G(x+y,\lambda
),\label{1.114}
\end{align}
where $P_i:X\rightarrow E_i^\lambda$   $(i=1,2)$ are the canonical projections.\\

The existence of  center manifold functions are classical; see among others \cite{henry, b-book}.
As the center manifold function is 
implicitly defined, one ingredient  in the theory and its applications of the phase transition dynamics in this book is that an approximation of the center manifold function to certain order will lead to a complete understanding of the transitions of a dynamical system. We now present  some  approximation formulas of the center manifold functions.\\

First, we show how the center manifold functions are constructed. 
Let
$\rho_{\varepsilon}:E^{\lambda}_1\rightarrow [0,1]$ be a $C^{\infty}$
cut-off function defined by
$$\rho_{\varepsilon}(x)=
\left\{
\begin{aligned}
& 1&&\text{if}\ \|x\|<\varepsilon ,\\
&0 &&\text{if}\ \|x\|>2\varepsilon, 
\end{aligned}
\right.$$ 
for some $\varepsilon >0$, and let
$$C^{0,1}(E^{\lambda}_1,E^{\lambda}_2(\theta
))=\{h:E^{\lambda}_1\rightarrow E^{\lambda}_2(\theta )|\ \ h(0)=0,
h\ \text{is\ Lipschitz}\}.$$

As in \cite{henry}, we need to find a
function $h\in C^{0,1}(E^{\lambda}_1,E^{\lambda}_2(\theta ))$
satisfying
\begin{equation}
h(\cdot)=\int^0_{-\infty}e^{-{\mathcal{L}}_{\lambda} \tau}\rho_{\varepsilon}(x(\tau
,\cdot ))G_2(x(\tau ,\cdot ) + h( x(\tau ,\cdot ) ), \lambda)d\tau,\label{1.106}
\end{equation}
where $x(t,x_0)$ is a solution of the ordinary differential
equation
\begin{equation}
\left. \begin{aligned}
&\frac{dx}{dt}=J_{\lambda} x+\rho_{\varepsilon}(x)G_1(x +h(x),\lambda
), && x(0)=x_0,
\end{aligned}
\right.\label{1.107}
\end{equation}
Then  the function $y(t,h(x_0))=h(x(t,x_0))$ is a
solution of the equation
$$\left. \begin{aligned}
&\frac{dy}{dt}={\mathcal{L}}_{\lambda} y+\rho_{\varepsilon}(x(t,x_0))G_2(x(t,x_0),y), && y(0)=h(x_0).
\end{aligned}
\right.$$ Thus, $x(t,x_0) + h(x(t,x_0))$ is a local solution of
(\ref{1.113})  and (\ref{1.114}), and the manifold $M_{\lambda}$ 
 is locally invariant for (\ref{5.1}).

The following theorem gives a first order approximation formula of
the center manifold function of (\ref{5.1}) near $\lambda
=\lambda_0$.

\bt  \cite[Theorem A.1.1]{ptd}\la{tB.1}
Assume that the nonlinear
operator $G(u,\lambda )$ has the Taylor expansion as (\ref{1.109}) at
$u=0$. Then the
center manifold function $\Phi :E_1\rightarrow E_2$ of (\ref{5.1}) near
$\lambda =\lambda_0$ can be expressed as
\begin{equation}
\Phi (x,\lambda
)=\int^0_{-\infty}e^{-\tau{\mathcal{L}}_{\lambda}}\rho_{\varepsilon}P_2G_k(e^{\tau
J_{\lambda}}x,\lambda )d\tau +o(\|x\|^k),\label{1.115}
\end{equation}
where $J_{\lambda}$ and ${\mathcal{L}}_{\lambda}$ are the linear
operators given by (\ref{1.112}), $G_k(u,\lambda )$ is the lowest order
$k$-multiple linear operator as in (\ref{1.109}), and
$x=\sum\limits^m_{i=1}x_ie_i\in E_1$. In particular, we have the
following assertions:

\begin{itemize}
\item[(1)] If $J_{\lambda}$ is diagonal near $\lambda =\lambda_0$, then
(\ref{1.115}) can be written as
\begin{equation}
-{\mathcal{L}}_{\lambda}\Phi (x,\lambda )=P_2G_k(x,\lambda
)+o(\|x\|^k)+O(|\beta |\|x\|^k),\label{1.116}
\end{equation}
where 
$\beta (\lambda )=(\beta_1(\lambda ),\cdots ,\beta_m(\lambda))$.

\item[(2)] Let $m=2$ and $\beta_1(\lambda )=\overline{\beta_2(\lambda
)}=\alpha (\lambda )+i\rho (\lambda )$ with $\rho (\lambda_0)\neq
0$. If $G_k(u,\lambda )=G_2(u,\lambda )$ is bilinear, then  
$\Phi (x,\lambda )$ can be expressed as
\begin{align}
&[ (-{\mathcal{L}}_{\lambda})^2+4\rho^2(\lambda)](-{\mathcal{L}}_{\lambda})\Phi (x,\lambda )
=[(-{\mathcal{L}}_{\lambda})^2+4\rho^2(\lambda )] P_2G_2(x,\lambda)  \nonumber  \\
& -2\rho^2(\lambda )P_2G_2(x,\lambda )+2\rho^2P_2G_2(x_1e_2-x_2e_1) \nonumber \\
& +\rho
(-{\mathcal{L}}_{\lambda})P_2[G_2(x_1e_1+x_2e_2,x_2e_1-x_1e_2)\nonumber\\
&+G_2(x_2e_1-x_1e_2,x_1e_1+x_2e_2)]+o(k),  \label{1.116-1}
\end{align}
where we have used 
$o(k)= o(\|x\|^k)+O(|\text{Re} \beta(\lambda) |\|x\|^k).$

\item[(3)] Let $\beta_1(\lambda)=\cdots = \beta_m(\lambda)$ have algebraic and geometric multiplicities $m\ge 2$  and $r =1 $ near $\lambda=\lambda_0$, i.e., $J_\lambda$  has the Jordan form:
\begin{equation}\label{A2.4.12-1}
J_\lambda=\left(
\begin{matrix}
\beta(\lambda) &   \delta                & \cdots     & 0                     &   0  \\
0                      &   \beta(\lambda)  & \cdots     & 0                     &   0  \\
\vdots               &  \vdots                & \vdots     & \vdots              &  \vdots \\
0                       & 0                        & \cdots     & \beta(\lambda) & \delta \\
0                       &   0                      & \cdots     & 0                       & \beta(\lambda)
\end{matrix}
\right)  \qquad \text{ for some }  \delta \not=0.
\end{equation}
Let 
$$z=\sum^m_{j=1} \xi_j e_j  \in E_1\quad \text{ with }  \quad \xi_j = \sum^{m-j}_{r=0}
\frac{\delta^r t^r x_{j+r}}{r!},$$
where $x=(x_1, \cdots x_m) \in \R^m$, $\delta \not= 0 $ is as in $J_\lambda$, and $t \ge 0$. 
Then the $k$-linear term $G_k(z, \lambda)$  can be expressed as 
$$G_k(z, \lambda) = F_1(x) + t F_2(x) + \cdots + t^{m-1} F_m(x),$$
and the center manifold function $\Phi$ can be expressed as 
\begin{equation}
\begin{aligned}
& \Phi= \sum^m_{j=1} \Phi_j + o(k), 
&&  -\mathcal L_\lambda^j \Phi_j = {(j-1)!} P_2 F_j(x) \quad \forall 1 \le j \le m.
\end{aligned}\la{A.2.4.12-2}
\end{equation}

\end{itemize}
\et

\section*{Acknowledgements}
The authors are grateful for two referees for their insightful comments. The work was supported in part by NSF grant DMS-2051032, and by Simons Foundation Travel Support for Mathematicians. Jared and Evan would also like to express our thanks to the Mathematics department of Indiana University for hosting the program.

\bibliographystyle{siam}

\bibliography{master}

\begin{thebibliography}{10}

\bibitem{CH57}
{\sc J.~Cahn and J.~E. Hilliard}, {\em Free energy of a nonuniform system i.
  interfacial energy}, J. Chemical Physics, 28 (1957), pp.~258--267.

\bibitem{Choksi}
{\sc R.~Choksi and P.~Sternberg}, {\em Periodic phase separation: The periodic
  cahn--hilliard and isoperimetric problems}, Interfaces and Free Boundaries, 8
  (2006), pp.~371--392.

\bibitem{Scheel15}
{\sc R.~Goh and A.~Scheel}, {\em Pattern formation in the wake of triggered
  pushed fronts}, Nonlinearity, 28 (2015), pp.~2897--2930.

\bibitem{Gusak}
{\sc A.~M. Gusak and K.~N. Tu}, {\em Diffusion-controlled solid state reactions
  in metals: kinetics and microstructure evolution}, Philosophical Magazine A,
  82 (2002), pp.~1827--1849.
\newblock Includes Cahn--Hilliard-type modeling of spinodal decomposition and
  segregation.

\bibitem{henry}
{\sc D.~Henry}, {\em Geometric theory of semilinear parabolic equations},
  vol.~840 of Lecture Notes in Mathematics, Springer-Verlag, Berlin, 1981.

\bibitem{hoy}
{\sc R.~Hoyle}, {\em Pattern Formation, An Introduction to Methods}, Cambridge
  University Press, 2006.

\bibitem{Kielhofer}
{\sc H.~Kielh{\"o}fer}, {\em Pattern formation of the stationary cahn--hilliard
  model}, Proceedings of the Royal Society of Edinburgh Section A: Mathematics,
  128 (1998), pp.~1219--1239.

\bibitem{liu}
{\sc C.~Liu and J.~Shen}, {\em A phase field model for the mixture of two
  incompressible fluids and its approximation by a fourier-spectral method},
  Physica D, 179,  (2003), pp.~211--228.

\bibitem{LSWZ11}
{\sc H.~Liu, T.~Sengul, S.~Wang, and P.~Zhang}, {\em Dynamic transitions and
  pattern formations for cahn-hilliard model with long-range repulsive
  interactions}, Comm Math Sci, 13:5 (2015), pp.~1289--1315.

\bibitem{b-book}
{\sc T.~Ma and S.~Wang}, {\em Bifurcation theory and applications}, vol.~53 of
  World Scientific Series on Nonlinear Science. Series A: Monographs and
  Treatises, World Scientific Publishing Co. Pte. Ltd., Hackensack, NJ, 2005.

\bibitem{ptd}
\leavevmode\vrule height 2pt depth -1.6pt width 23pt, {\em Phase Transition
  Dynamics}, Springer-Verlag, xxii, 555pp., 2013.

\bibitem{Wanner}
{\sc S.~Maier-Paape and T.~Wanner}, {\em Spinodal decomposition for the
  cahn--hilliard equation in higher dimensions: Nonlinear dynamics}, Archive
  for Rational Mechanics and Analysis, 151 (2000), pp.~187--219.

\bibitem{Nepomnyashchy2015}
{\sc A.~A. Nepomnyashchy}, {\em Coarsening versus pattern formation}, Physica
  D: Nonlinear Phenomena, 308 (2015), pp.~58--65.

\bibitem{Nicolaenko}
{\sc B.~Nicolaenko, B.~Scheurer, and R.~Temam}, {\em Some global dynamical
  properties of a class of pattern formation equations}, Communications in
  Partial Differential Equations, 14 (1989), pp.~245--297.

\bibitem{NS84}
{\sc A.~Novick-Cohen and L.~A. Segel}, {\em Nonlinear aspects of the
  {C}ahn-{H}illiard equation}, Phys. D, 10 (1984), pp.~277--298.

\bibitem{pmn}
{\sc L.~Pismen}, {\em Patterns and Interfaces in Dissipative Dynamics},
  Springer, Berlin, 2006.

\bibitem{reichl}
{\sc L.~E. Reichl}, {\em A modern course in statistical physics}, A
  Wiley-Interscience Publication, John Wiley \& Sons Inc., New York,
  second~ed., 1998.

\bibitem{Sell}
{\sc G.~R. Sell}, {\em Multi-peaked solutions to the cahn--hilliard equation},
  Preprint,  (unpublished).
\newblock University of Minnesota, Dynamical Systems Group notes.

\bibitem{sy}
{\sc J.~Shen and X.~Yang}, {\em Numerical approximations of allen-cahn and
  cahn-hilliard equations}, 2010.

\end{thebibliography}
\end{document}